\definecolor{gray}{gray}{0.7}
\definecolor{Gray}{gray}{0.3}
\numberwithin{equation}{section}
\theoremstyle{break}
 \newtheorem{theorem}{Theorem}[section]
 \newtheorem{proposition}[theorem]{Proposition}
 \newtheorem{corollary}[theorem]{Corollary}
 \newtheorem{lemma}[theorem]{Lemma}
 \theoremstyle{definition}
 \newtheorem{definition}[theorem]{Definition}
 \newtheorem{remark}[theorem]{Remark}
 \newtheorem{example}[theorem]{Example}
\def\C{\mathbb C}
\def\R{\mathbb R}
\def\Q{\mathbb Q}
\def\Z{\mathbb Z}
\def\NN{\mathcal{N}}
\def\g{\mathfrak{g}}
\def\b{\mathfrak{b}}
\def\t{\mathfrak{t}}
\def\uu{\mathfrak{u}}
\def\p{\mathfrak{p}}
\def\a{\mathfrak{a}}
\def\Levi{L}
\def\q{q}
\def\rr{r}
\def\RR{\mathcal{R}}
\def\mm{m}
\def\KK{\mathcal{K}}
\def\kk{k}
\def\AA{\mathcal{A}}
\def\der{\psi}
\DeclareMathOperator{\height}{ht}
\DeclareMathOperator{\Der}{Der}
\DeclareMathOperator{\Sym}{Sym}
\DeclareMathOperator{\Hom}{Hom}
\DeclareMathOperator{\id}{id}
\DeclareMathOperator{\Poin}{Poin}
\DeclareMathOperator{\Ad}{Ad}
\DeclareMathOperator{\ad}{ad}
\DeclareMathOperator{\SL}{SL}
\DeclareMathOperator{\Aut}{Aut}
\DeclareMathOperator{\Hess}{Hess}
\newcommand{\Flag}{Fl}
\begin{document}
  
\title[Regular nilpotent partial Hessenberg varieties]{Regular nilpotent partial Hessenberg varieties}
\author[T. Horiguchi]{Tatsuya Horiguchi}
\address{National Institute of Technology, Akashi College, 679-3, Nishioka, Uozumi-cho, Akashi, Hyogo 674-8501, Japan}
\email{tatsuya.horiguchi0103@gmail.com}

\subjclass[2020]{Primary 14M15, 32S22, 17B22}

\keywords{Hessenberg varieties, partial flag varieties, ideal arrangements, logarithmic derivation modules.}

\begin{abstract}
Let $G$ be a complex semisimple linear algebraic group.
Fix a subset $\Theta$ of simple roots. 
Given a lower ideal $I$ in positive roots, one can define the regular nilpotent Hessenberg variety $\Hess(N,I)$ in the full flag variety $G/B$. 
For a $\Theta$-ideal $I$ (which is a special lower ideal), we can define the regular nilpotent partial Hessenberg variety $\Hess_\Theta(N,I)$ in the partial flag variety $G/P$. 
In this manuscript we first provide a summand formula and a product formula for the Poincar\'e polynomial of regular nilpotent partial Hessenberg varieties. 
It is a well-known result from Bernstein--Gelfand--Gelfand that the cohomology ring of the partial flag variety $G/P$ is isomorphic to the invariants in the cohomology ring of the full flag variety $G/B$ under an action of the parabolic Weyl group $W_\Theta$ generated by $\Theta$. 
We generalize this result to regular nilpotent partial Hessenberg varieties. 
More concretely, we give an isomorphism between the cohomology ring of a regular nilpotent partial Hessenberg variety $\Hess_\Theta(N,I)$ and the $W_\Theta$-invariant subring of the cohomology ring of the regular nilpotent Hessenberg variety $\Hess(N,I)$. 
Furthermore, we provide a description of the cohomology ring for a regular nilpotent partial Hessenberg variety $\Hess_\Theta(N,I)$ in terms of the $W_\Theta$-invariants in the logarithmic derivation module of the ideal arrangement $\AA_I$, which is a generalization of the result by Abe--Masuda--Murai--Sato with the author.
\end{abstract}

\maketitle

\setcounter{tocdepth}{1}

\tableofcontents

\section{Introduction}
\label{sect:Intro}

Hessenberg varieties are subvarieties of (full) flag varieties introduced by F. De Mari, C. Procesi, and M. A. Shayman in \cite{dMPS, dMaSh}. 
This subject lies in a fruitful intersection of algebraic geometry, combinatorics, topology, and
representation theory.
Particular examples are Springer fibers, Peterson varieties, and toric varieties associated with weight polytopes. 
We refer the reader to \cite{AbHo} for a survey of Hessenberg varieties. 
Two specific classes of Hessenberg varieties, called regular nilpotent and regular semisimple Hessenberg varieties, are related to other research areas such as hyperplane arrangements and graph theory. 
In fact, Tymoczko constructed an action of symmetric groups on the cohomology of regular semisimple Hessenberg varieties in type $A$, which is called the dot action \cite{Tym08}.
Shareshian and Wachs conjectured in \cite{ShWa16} a relationship between the dot action and a graded version of the Stanley's chromatic symmetric functions of incomparability graphs of natural unit interval orders.
This conjecture was proved by Brosnan and Chow \cite{BrCh}, and soon after, Guay--Paquet gave an alternative proof \cite{Gua}.
On the other hand, the cohomology rings of regular nilpotent Hessenberg varieties can be described in terms of the logarithmic derivation modules of ideal arrnagements \cite{AHMMS}. 
By using this relationship, we obtain an explicit presentation for the cohomology rings of regular nilpotent Hessenberg varieties \cite{AHMMS, EHNT19}. 
We remark that the cohomology rings of regular nilpotent Hessenberg varieties in type $A$ was explicitly described in \cite{AHHM} by a localization technique.
Furthermore, it is known that the cohomology ring of regular nilpotent Hessenberg varieties is the trivial part of the dot action on the cohomology ring of regular semisimple Hessenberg varieties \cite{AHHM, AHMMS, BaCr, BrCh}.
We remark that most generalization of this result is given by \cite{BaCr}. 

Kiem and Lee generalized the now-settled Shareshian--Wachs conjecture to regular semisimple partial Hessenberg varieties, which are subvarieties of partial flag varieties \cite{KiLe}.
It is natural to ask how we can generalize the relation with hyperplane arranegements to regular nilpotent partial Hessenberg varieties. 
In this paper we describe the cohomology rings of regular nilpotent partial Hessenberg varieties in terms of the invariants of the parabolic Weyl group action on the logarithmic derivation modules of ideal arrnagements.
For this purpose, we connect the cohomology ring of regular nilpotent partial Hessenberg varieties with that of regular nilpotent Hessenberg varieties.
We also give a summation formula and a product formula for the Poincar\'e polynomial of regular nilpotent partial Hessenberg varieties. 
Throughout this manuscript, all cohomology will be taken with rational coefficients unless
otherwise specified.

Let $G$ be a semisimple linear algebraic group over $\C$. 
Fix a Borel subgroup $B$ of $G$ and a maximal torus $T$ of $G$ in $B$.
The Lie algebras of $T \subset B \subset G$ are denoted by $\t \subset \b \subset \g$.
We write $\Phi$ for the root system of $\t$ in $\g$, and $\Phi^+$ and $\Delta$ stand for the set of positive roots and the set of simple roots in $\Phi$, respectively. 
If $I$ is a lower ideal in $\Phi^+$, i.e. a lower closed subset of $\Phi^+$ with respect to the usual partial order $\preceq$ on $\Phi^+$, then $H_I = \b \oplus (\bigoplus_{\alpha \in I} \g_{-\alpha})$ is a $\b$-submodule of $\g$ containing $\b$. 
Here, $\g_\alpha$ denotes the root space associated to a root $\alpha \in \Phi$.
Conversely, any $\b$-submodule of $\g$ containing $\b$ is of the form $H_I$ for some lower ideal $I \subset \Phi^+$.
For $x \in \g$ and a lower ideal $I \subset \Phi^+$, we define the \emph{Hessenberg variety} $\Hess(x,I)$ by
\begin{align*}
\Hess(x,I) = \{gB \in G/B \mid \Ad(g^{-1})(x) \in H_I \}.
\end{align*}
If $x$ is a regular nilpotent element $N$ in $\g$, then we call $\Hess(N,I)$ a \emph{regular nilpotent Hessenberg variety}.

A parabolic subgroup $P$ of $G$ containing $B$ is of the form $P_\Theta$ for some $\Theta \subset \Delta$ (see Section~\ref{sect:setting} for the definition of $P_\Theta$). 
We denote by $\p=\p_\Theta$ the Lie algebra of $P$. 
Let $\Phi_\Theta^+$ be the set of positive roots which are linear combinations of roots in $\Theta$. 
We introduce a partial order $\preceq_\Theta$ on $\Phi$ with respset to $\Theta$ in a natural way. 
A lower ideal $I \subset \Phi^+$ is a \emph{$\Theta$-ideal} if $I$ is an upper closed subset of $\Phi^+$ with respect to the partial order $\preceq_\Theta$ on $\Phi^+$ and $I$ includes $\Phi^+_\Theta$. 
Then $H_I$ is a $\p$-submodule of $\g$ containing $\p$.  
Conversely, every $\p$-submodule of $\g$ containing $\p$ is of the form $H_I$ for some $\Theta$-ideal $I \subset \Phi^+$.
See Section~\ref{subsection:Theta-ideals} for the details.
For $x \in \g$ and a $\Theta$-ideal $I \subset \Phi^+$, we define the \emph{partial Hessenberg variety} $\Hess_\Theta(x,I)$ by
\begin{align*}
\Hess_\Theta(x,I) = \{gP \in G/P \mid \Ad(g^{-1})(x) \in H_I \}.
\end{align*}
If $\Theta$ is the empty set, then $P=B$. In this case we denote $\Hess_\Theta(x,I)$ by $\Hess(x,I)$ for simplicity.
If we take a regular nilpotent element $N$ in $\g$, then $\Hess_\Theta(N,I)$ is called a \emph{regular nilpotent partial Hessenberg variety}. 
We will study a geometry and a topology of $\Hess_\Theta(N,I)$.

Tymoczko proved that $\Hess(x,I)$ is paved by affines for arbitrary $x \in \g$ in type $A$ in \cite{Tym06} and $\Hess(N,I)$ is paved by affines for classical Lie types by \cite{Tym07}. 
These results are generalized to arbitrary Lie types by Precup \cite{Pre13}.
Fresse gives an affine paving in a more general setting in \cite{Fre}, which includes the case of regular nilpotent partial Hessenberg varieties.
We give an alternative and simple proof for an affine paving of $\Hess_\Theta(N,I)$ in all Lie types by using Precup's result. 
We also give a summation formula for the Poincar\'e polynomial of $\Hess_\Theta(N,I)$ as follows.
Let $W$ be the Weyl group of $G$.
Let $W_\Theta$ be the subgroup of the Weyl group $W$ generated by $s_{\alpha}$ for $\alpha \in \Theta$ where $s_\alpha$ denotes the reflection associated to a root $\alpha$.
We write $W^\Theta$ for the minimal left coset representatives, i.e. $W^\Theta = \{w \in W \mid \ell(w) < \ell(ws_\alpha) \ \textrm{for all} \ \alpha \in \Theta \}$. 
Then the Poincar\'e polynomial of $\Hess_\Theta(N,I)$ is
\begin{align} \label{eq:summation1_Intro}
\Poin(\Hess_\Theta(N,I), \sqrt{\q}) = \sum_{w \in W^\Theta \atop w^{-1}(\Delta) \subset (-I) \cup \Phi^+} \q^{|\NN(w) \cap I|}
\end{align}
where $\NN(w)$ is the set of positive roots $\alpha$ such that $w(\alpha)$ is a negative root. 
This formula can be written as follows.
Let $\mathcal{W}^I$ be the Weyl type subsets of a lower ideal $I \subset \Phi^+$.
See Section~\ref{subsection:Weyl type subset} for the definition. 
For a $\Theta$-ideal $I \subset \Phi^+$, we set $\mathcal{W}^{I, \Theta} = \{Y \in \mathcal{W}^I \mid Y \cap \Phi^+_\Theta = \emptyset \}$. 
Then the summation formula in \eqref{eq:summation1_Intro} can be written as follows.

\begin{theorem} \label{theorem:summation2_Intro}
Let $N$ be a regular nilpotent element in $\g$ and $I$ a $\Theta$-ideal in $\Phi^+$.
Then the Poincar\'e polynomial of $\Hess_\Theta(N,I)$ is given by
\begin{align*} 
\Poin(\Hess_\Theta(N,I), \sqrt{\q}) = \sum_{Y \in \mathcal{W}^{I, \Theta}} \q^{|Y|}.
\end{align*} 
\end{theorem}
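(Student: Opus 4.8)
The plan is to derive the statement purely combinatorially from the summation formula \eqref{eq:summation1_Intro}, by identifying its index set with $\mathcal{W}^{I,\Theta}$ through the standard correspondence underlying Weyl type subsets. Concretely, I would use the following fact, which I expect to be recorded in Section~\ref{subsection:Weyl type subset}: the assignment $w \mapsto \NN(w) \cap I$ restricts to a bijection
\[
\Xi\colon \{\, w \in W \mid w^{-1}(\Delta) \subset (-I) \cup \Phi^+ \,\} \xrightarrow{\sim} \mathcal{W}^I ,
\]
with $|\Xi(w)| = |\NN(w) \cap I|$ for every such $w$; this is precisely what rewrites \eqref{eq:summation1_Intro} as $\sum_{Y \in \mathcal{W}^I} \q^{|Y|}$ in the case $\Theta = \emptyset$. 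Granting this, the entire task reduces to showing that $\Xi$ carries $\{\, w \in W^\Theta \mid w^{-1}(\Delta) \subset (-I) \cup \Phi^+ \,\}$ bijectively onto $\mathcal{W}^{I,\Theta}$: once that is in place, \eqref{eq:summation1_Intro} together with the length compatibility $|\Xi(w)| = |\NN(w) \cap I|$ gives $\Poin(\Hess_\Theta(N,I), \sqrt{\q}) = \sum_{Y \in \mathcal{W}^{I,\Theta}} \q^{|Y|}$ immediately.

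The key step is a characterization of minimal coset representatives in terms of inversion sets. Since $\ell(w) < \ell(w s_\alpha)$ holds exactly when $w(\alpha) \in \Phi^+$, the definition of $W^\Theta$ says that $w \in W^\Theta$ if and only if $w(\alpha) \in \Phi^+$ for all $\alpha \in \Theta$. I would then show this is equivalent to $\NN(w) \cap \Phi^+_\Theta = \emptyset$, that is, to $w(\beta) \in \Phi^+$ for all $\beta \in \Phi^+_\Theta$. One implication is immediate from $\Theta \subseteq \Phi^+_\Theta$; for the converse, writing $\beta \in \Phi^+_\Theta$ as a nonnegative integral combination $\beta = \sum_{\alpha \in \Theta} c_\alpha \alpha$, the element $w(\beta) = \sum_{\alpha \in \Theta} c_\alpha \, w(\alpha)$ has nonnegative simple-root coordinates (each $w(\alpha)$ does) and is a root, hence lies in $\Phi^+$.

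Now the $\Theta$-ideal hypothesis enters: by definition $\Phi^+_\Theta \subseteq I$, so $\NN(w) \cap \Phi^+_\Theta = (\NN(w) \cap I) \cap \Phi^+_\Theta$. Combining this with the previous paragraph, for $w$ in the domain of $\Xi$ one obtains
\[
w \in W^\Theta \iff (\NN(w) \cap I) \cap \Phi^+_\Theta = \emptyset \iff \Xi(w) \cap \Phi^+_\Theta = \emptyset \iff \Xi(w) \in \mathcal{W}^{I,\Theta} .
\]
Since $\Xi$ is a bijection onto $\mathcal{W}^I$, it therefore restricts to a bijection from $\{\, w \in W^\Theta \mid w^{-1}(\Delta) \subset (-I) \cup \Phi^+ \,\}$ onto $\mathcal{W}^{I,\Theta}$, which completes the argument.

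I expect the only genuine subtleties to be (i) the elementary but slightly delicate equivalence $w(\Theta) \subseteq \Phi^+ \Longleftrightarrow w(\Phi^+_\Theta) \subseteq \Phi^+$ used above, and (ii) having the cardinality-preserving bijection $\Xi$ for the full flag case available with exactly the stated domain — which itself rests on the affine paving of $\Hess_\Theta(N,I)$ and on \eqref{eq:summation1_Intro}, both supplied earlier in the paper. Everything beyond those inputs is formal, and the single place where the $\Theta$-ideal condition is really used is the inclusion $\Phi^+_\Theta \subseteq I$, which is what allows the constraint $Y \cap \Phi^+_\Theta = \emptyset$ to be tested inside $I$ rather than inside all of $\Phi^+$.
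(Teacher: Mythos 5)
Your proof is correct and follows the same overall architecture as the paper: use the Sommers--Tymoczko bijection $\eta\colon \{w \in W \mid w^{-1}(\Delta) \subset (-I)\cup\Phi^+\} \to \mathcal{W}^I$, $w \mapsto \NN(w)\cap I$ (the paper's Proposition~\ref{proposition:one-to-oneWeylType}), show that it restricts to the $W^\Theta$-piece precisely because $w\in W^\Theta \iff \NN(w)\cap\Phi^+_\Theta=\emptyset$ (the paper's Lemma~\ref{lemma:WeylTypeNw}), exploit $\Phi^+_\Theta\subset I$ to test $Y\cap\Phi^+_\Theta=\emptyset$ inside $I$, and plug into \eqref{eq:summation1_Intro}. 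The one place where you depart from the paper is the proof of the key lemma $w\in W^\Theta \iff \NN(w)\cap\Phi^+_\Theta=\emptyset$: the paper argues by contradiction using Lemma~\ref{lemma:Nw} ($\NN(v)\subset\NN(uv)$ when lengths add) together with several structural facts about parabolic subgroups from Humphreys; you instead observe directly that $w\in W^\Theta$ means $w(\alpha)\in\Phi^+$ for all $\alpha\in\Theta$, and then promote this to $w(\Phi^+_\Theta)\subset\Phi^+$ by noting that $w(\beta)=\sum_{\alpha\in\Theta}c_\alpha\,w(\alpha)$ (with $c_\alpha\ge 0$) is a root with nonnegative simple-root coefficients. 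Your argument for the lemma is shorter and more elementary, requiring only the dichotomy on root coefficients rather than the length-additivity machinery and the uniqueness of the coset decomposition $w=uv$; the paper's route is more self-contained with respect to what it has already cited (Lemma~\ref{lemma:Nw}). Both are valid, and the remainder of your argument coincides with Propositions~\ref{proposition:one-to-oneWeylTypeTheta} and Theorem~\ref{theorem:PoincarePolynomialHessGPWeylType} in the paper.
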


We also provide a product formula for the Poincar\'e polynomial of $\Hess_\Theta(N,I)$.
We first show that the natural projection $\pi_I: \Hess(N,I) \to \Hess_\Theta(N,I)$ is a fiber bundle with fiber $P/B$. 
Applying the Leray--Hirsch theorem, we obtain
\begin{align} \label{eq:product1_Intro}
\Poin(\Hess_\Theta(N,I),\sqrt{\q}) = \prod_{\alpha \in I \setminus \Phi^+_\Theta} \frac{1-\q^{\height(\alpha)+1}}{1-\q^{\height(\alpha)}} 
\end{align}
where $\height(\alpha)$ denotes the height of a root $\alpha$. 
We can also describe a product formula for $\Poin(\Hess_\Theta(N,I),\sqrt{\q})$ in terms of the height distribution as follows.
For a $\Theta$-ideal $I \subset \Phi^+$, we define
\begin{align*}
\lambda_i^{I \setminus \Phi^+_\Theta} = |\{\alpha \in I \setminus \Phi^+_\Theta \mid \height(\alpha)=i \}| \ \ \ \textrm{for} \ 1 \leq i \leq \rr \coloneqq \max\{\height(\alpha) \mid \alpha \in I \setminus \Phi^+_\Theta \}.
\end{align*}
Note that $I \supset \Phi^+_\Theta$ since $I$ is a $\Theta$-ideal. 
We call a sequence $(\lambda_1^{I \setminus \Phi^+_\Theta}, \lambda_2^{I \setminus \Phi^+_\Theta}, \cdots, \lambda_\rr^{I \setminus \Phi^+_\Theta})$ the \emph{height distribution in $I \setminus \Phi^+_\Theta$}. 
We set an integer 
\begin{align*} 
\mm_i^{I \setminus \Phi^+_\Theta} = \lambda_i^{I \setminus \Phi^+_\Theta}-\lambda_{i+1}^{I \setminus \Phi^+_\Theta} \ \ \ \textrm{for} \ 1 \leq i \leq \rr 
\end{align*}
with the convention $\lambda_{\rr+1}^{I \setminus \Phi^+_\Theta} = 0$.

\begin{theorem} \label{theorem:product2_Intro}
Let $N$ be a regular nilpotent element in $\g$ and $I$ a $\Theta$-ideal in $\Phi^+$. 
In the setting above, the Poincar\'e polynomial of $\Hess_\Theta(N,I)$ is equal to 
\begin{align*} 
\Poin(\Hess_\Theta(N,I), \sqrt{\q}) = \prod_{i=1}^\rr (1+\q+\q^2+\cdots+\q^i)^{\mm_i^{I \setminus \Phi^+_\Theta}}.
\end{align*} 
\end{theorem}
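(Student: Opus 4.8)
The plan is to start from the product formula \eqref{eq:product1_Intro}, which is assumed to be established, and to reorganize the product by grouping the factors according to the height of the roots rather than the roots themselves. Concretely, since every $\alpha \in I \setminus \Phi^+_\Theta$ contributes a factor $\frac{1-\q^{\height(\alpha)+1}}{1-\q^{\height(\alpha)}}$, and there are exactly $\lambda_i^{I \setminus \Phi^+_\Theta}$ roots of height $i$ (for $1 \le i \le \rr$), collecting these gives
\begin{align*}
\Poin(\Hess_\Theta(N,I),\sqrt{\q}) = \prod_{i=1}^{\rr} \left( \frac{1-\q^{i+1}}{1-\q^{i}} \right)^{\lambda_i^{I \setminus \Phi^+_\Theta}}.
\end{align*}
The first key step is to justify that the height distribution is the only relevant datum here, i.e. that the exponents in \eqref{eq:product1_Intro} indeed partition into these multiplicities; this is immediate from the definition of $\lambda_i^{I \setminus \Phi^+_\Theta}$.

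The second and main step is an Abel-summation-style rearrangement of the exponents. Writing $1-\q^{i+1} = (1-\q)(1+\q+\cdots+\q^{i})$ and $1-\q^{i} = (1-\q)(1+\q+\cdots+\q^{i-1})$, each factor becomes $\frac{1+\q+\cdots+\q^{i}}{1+\q+\cdots+\q^{i-1}}$ (with the denominator equal to $1$ when $i=1$). Hence the whole product telescopes in a weighted sense: the polynomial $1+\q+\cdots+\q^{i}$ appears with total exponent $\lambda_i^{I \setminus \Phi^+_\Theta} - \lambda_{i+1}^{I \setminus \Phi^+_\Theta} = \mm_i^{I \setminus \Phi^+_\Theta}$, using the convention $\lambda_{\rr+1}^{I \setminus \Phi^+_\Theta} = 0$ for the top factor and noting that the denominator $1+\q+\cdots+\q^{i-1}$ coming from height $i$ cancels against part of the numerator coming from height $i-1$. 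This yields exactly
\begin{align*}
\prod_{i=1}^{\rr} (1+\q+\q^2+\cdots+\q^i)^{\mm_i^{I \setminus \Phi^+_\Theta}},
\end{align*}
as claimed.

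The step I expect to require the most care is the bookkeeping in the telescoping rearrangement: one must check that regrouping $\prod_i \bigl(\frac{P_i}{P_{i-1}}\bigr)^{\lambda_i}$ (with $P_i = 1+\q+\cdots+\q^i$ and $P_0 = 1$) into $\prod_i P_i^{\lambda_i - \lambda_{i+1}}$ is a valid identity of rational functions, and in particular that no factor $P_i$ with $i \le 0$ survives and that the exponent of $P_{\rr}$ is $\lambda_{\rr}^{I \setminus \Phi^+_\Theta}$, consistent with $\mm_{\rr}^{I \setminus \Phi^+_\Theta} = \lambda_{\rr}^{I \setminus \Phi^+_\Theta} - 0$. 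This is an elementary but slightly fiddly manipulation of finite products; once it is in place the theorem follows at once. As a consistency check one can also verify the degree: the degree of the right-hand side is $\sum_{i=1}^{\rr} i\, \mm_i^{I \setminus \Phi^+_\Theta} = \sum_{i=1}^{\rr} \lambda_i^{I \setminus \Phi^+_\Theta} = |I \setminus \Phi^+_\Theta|$, which matches the complex dimension of $\Hess_\Theta(N,I)$ and the top degree forced by \eqref{eq:product1_Intro}.
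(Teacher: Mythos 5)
Your proof is correct, but it follows a different route from the paper's. You start from the root-by-root product formula \eqref{eq:product1_Intro}, group factors by height, and telescope the resulting ratio $\prod_i (P_i/P_{i-1})^{\lambda_i^{I\setminus\Phi^+_\Theta}}$ (with $P_i = 1+\q+\cdots+\q^i$, $P_0=1$) directly into $\prod_i P_i^{\mm_i^{I\setminus\Phi^+_\Theta}}$. The paper instead uses the fibration $\Hess(N,I)\to\Hess_\Theta(N,I)$ with fiber $P/B=\Hess(N_0,\Phi^+_\Theta)$ to write $\Poin(\Hess_\Theta(N,I),\sqrt{\q})$ as the quotient of the already-established full-flag formula \eqref{eq:PoincarePolynomialHessGBHeightDistribution} for $I$ by the same formula for $\Phi^+_\Theta$, and then invokes the additivity $\lambda_i^{I\setminus\Phi^+_\Theta}=\lambda_i^I-\lambda_i^{\Phi^+_\Theta}$, hence $\mm_i^{I\setminus\Phi^+_\Theta}=\mm_i^I-\mm_i^{\Phi^+_\Theta}$. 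Both arguments are elementary and rely on the same fiber-bundle input; the difference is that you reprove the telescoping identity at the partial level, whereas the paper outsources it to the $\Theta=\emptyset$ case and subtracts exponents. Your version is a bit more self-contained and avoids introducing the auxiliary maxima $\rr'$ and $\rr''$; the paper's version more visibly exhibits the Poincar\'e polynomial of $\Hess_\Theta(N,I)$ as a ratio of two previously known polynomials. Your degree check $\sum_i i\,\mm_i^{I\setminus\Phi^+_\Theta}=|I\setminus\Phi^+_\Theta|$ is a nice sanity check consistent with Proposition~\ref{proposition:propertyHess(N,I)Theta}(2).
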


Note that the right hand side of the equality in Theorem~\ref{theorem:product2_Intro} seems to be a rational function, but it is a polynomial in the variable $\q$ since the left hand side is a polynomial. 
We will see an explicit polynomial description for the product formula of $\Poin(\Hess_\Theta(N,I), \sqrt{\q})$ for type $A$ in Section~\ref{subsection:typeAformula}. 
Theorems~\ref{theorem:summation2_Intro} and \ref{theorem:product2_Intro} yield the following corollary.

\begin{corollary}
For arbitrary $\Theta$-ideal $I$ in $\Phi^+$, we obtain 
\begin{align} \label{eq:Sommers-Tymoczko_Theta_Intro} 
\sum_{Y \in \mathcal{W}^{I, \Theta}} \q^{|Y|} = \prod_{i=1}^\rr (1+\q+\q^2+\cdots+\q^i)^{\mm_i^{I \setminus \Phi^+_\Theta}}. 
\end{align} 
\end{corollary}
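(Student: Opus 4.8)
The plan is entirely immediate: both sides of \eqref{eq:Sommers-Tymoczko_Theta_Intro} have already been identified with the same object, namely the Poincaré polynomial $\Poin(\Hess_\Theta(N,I), \sqrt{\q})$, so the corollary follows by transitivity of equality. Concretely, I would first invoke Theorem~\ref{theorem:summation2_Intro} to write
\begin{align*}
\Poin(\Hess_\Theta(N,I), \sqrt{\q}) = \sum_{Y \in \mathcal{W}^{I, \Theta}} \q^{|Y|},
\end{align*}
and then invoke Theorem~\ref{theorem:product2_Intro} to write
\begin{align*}
\Poin(\Hess_\Theta(N,I), \sqrt{\q}) = \prod_{i=1}^\rr (1+\q+\q^2+\cdots+\q^i)^{\mm_i^{I \setminus \Phi^+_\Theta}}.
\end{align*}
Comparing the two right-hand sides yields \eqref{eq:Sommers-Tymoczko_Theta_Intro} for every $\Theta$-ideal $I \subset \Phi^+$.

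Since this is purely a matter of chaining the two preceding theorems, there is no genuine obstacle in the argument itself; the content lies entirely in Theorems~\ref{theorem:summation2_Intro} and \ref{theorem:product2_Intro}. The one small remark worth recording in the write-up is that, although the right-hand side of \eqref{eq:Sommers-Tymoczko_Theta_Intro} is a priori only a rational function in $\q$, it is automatically a polynomial because it equals the left-hand side, which is manifestly a polynomial; one could also note that this recovers the Sommers--Tymoczko type identity (hence the label) in the special case $\Theta = \emptyset$, where $\Phi^+_\Theta = \emptyset$ and the formula reduces to the known factorization of $\sum_{Y \in \mathcal{W}^I} \q^{|Y|}$.
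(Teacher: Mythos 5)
Your proposal is correct and matches the paper's own proof exactly: the corollary in the body of the paper is proved by combining the two theorems computing $\Poin(\Hess_\Theta(N,I),\sqrt{\q})$ as a sum over Weyl type subsets and as a product over the height distribution, precisely as you do. Your added remarks about polynomiality of the right-hand side and the $\Theta=\emptyset$ specialization echo observations the paper itself makes in the surrounding text.
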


We remark that if $\Theta$ is the empty set, then the equality in \eqref{eq:Sommers-Tymoczko_Theta_Intro} was posed as a conjecture by Sommers and Tymoczko in \cite{SoTy} and they proved this equality for types $A,B,C, F_4, E_6$, and $G_2$.
Also, Schauenburg confirmed Sommers--Tymoczko conjecture for types $D_5, D_6, D_7$, and $E_7$ by direct computation, and R\"{o}hrle proved the conjecture for type $D_4$ and $E_8$ in \cite{Roh}.
For arbitrary Lie types, Abe, Masuda, Murai, Sato with the author completely proved Sommers--Tymoczko conjecture by a classification-free argument in \cite{AHMMS}.
The equality in \eqref{eq:Sommers-Tymoczko_Theta_Intro} is a generalization of the equality. 

We next study the cohomology rings of regular nilpotent partial Hessenberg varieties. 
For this, we recall the cohomology rings of partial flag varieties. 
The flag variety $G/B$ admits an action of the Weyl group $W$, so we obtain the $W$-action on $H^*(G/B)$.  
The natural projection $\pi: G/B \to G/P$ induces the homomorphism $\pi^*: H^*(G/P) \rightarrow H^*(G/B)$.
It is known from \cite{BGG} that $\pi^*$ is injective and its image coincides with $H^*(G/B)^{W_\Theta}$ where $H^*(G/B)^{W_\Theta}$ denotes the invariants in $H^*(G/B)$ under the action of $W_\Theta$. 
In particular, $\pi^*: H^*(G/P) \hookrightarrow H^*(G/B)$ induces the isomorphism of graded $\Q$-algebras
\begin{align} \label{eq:BGG_Intro} 
H^*(G/P) \cong H^*(G/B)^{W_\Theta}.
\end{align} 
We generalize this result to regular nilpotent partial Hessenberg varieties. 

\begin{theorem} \label{theorem:cohomology_Intro}
Let $N$ be a regular nilpotent element in $\g$ and $I$ a $\Theta$-ideal in $\Phi^+$.
Then the following holds.
\begin{enumerate}
\item[(1)] The $W_\Theta$-action on $G/B$ preserves $\Hess(N,I)$.
\item[(2)] The homomorphism $\pi_I^*: H^*(\Hess_\Theta(N,I)) \rightarrow H^*(\Hess(N,I))$ induced from the natural projection $\pi_I: \Hess(N,I) \to \Hess_\Theta(N,I)$ is injective.
\item[(3)] The image of $\pi_I^*: H^*(\Hess_\Theta(N,I)) \hookrightarrow H^*(\Hess(N,I))$ coincides with the invariant subring $H^*(\Hess(N,I))^{W_\Theta}$. 
In particular, $\pi_I^*$ yields the isomorphism
\begin{align*}
H^*(\Hess_\Theta(N,I)) \cong H^*(\Hess(N,I))^{W_\Theta}
\end{align*} 
as graded $\Q$-algebras. 
\end{enumerate}
\end{theorem}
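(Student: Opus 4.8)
The plan is to adapt, in the Hessenberg setting, the classical proof of the Bernstein--Gelfand--Gelfand isomorphism \eqref{eq:BGG_Intro}. The inputs I would assemble are: the fibre bundle $\pi_I\colon\Hess(N,I)\to\Hess_\Theta(N,I)$ with fibre $P/B$ (already used for \eqref{eq:product1_Intro}); the identity $\Hess(N,I)=\pi^{-1}(\Hess_\Theta(N,I))$ for the projection $\pi\colon G/B\to G/P$; surjectivity of the restriction homomorphisms $H^*(G/B)\onto H^*(\Hess(N,I))$ and $H^*(G/P)\onto H^*(\Hess_\Theta(N,I))$ (the first is known for regular nilpotent Hessenberg varieties, the second read off from an affine paving of $\Hess_\Theta(N,I)$); the BGG identity $\pi^*\colon H^*(G/P)\xrightarrow{\ \sim\ }H^*(G/B)^{W_\Theta}$; and exactness of the invariants functor $(-)^{W_\Theta}$ over $\Q$ (Maschke/Reynolds), which is where rational coefficients are essential.

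For $(1)$ I would first record the identity $\Hess(N,I)=\pi^{-1}(\Hess_\Theta(N,I))$. Writing $N=N_\Theta+N'$ with $N_\Theta$ a principal nilpotent of $\mathfrak{l}_\Theta$ and $N'\in\u_\Theta$, and using that $H_I$ and $\u_\Theta$ are $\Ad(P)$-stable while $H_I\cap\mathfrak{l}_\Theta=\mathfrak{l}_\Theta$ (here $\Phi^+_\Theta\subseteq I$ is used), one checks $\Ad((gp)^{-1})(N)\in H_I$ for every $p\in P$ whenever $gP\in\Hess_\Theta(N,I)$, which gives the identity. Since the $W_\Theta$-action on $G/B$ covers the trivial action on $G/P$ (geometrically, each $s_\alpha$, $\alpha\in\Theta$, acts along the $\mathbb{P}^1$-bundle $P_{\{\alpha\}}/B$; cohomologically this is the statement $\pi^*H^*(G/P)=H^*(G/B)^{W_\Theta}$), it preserves the $\pi$-preimage $\Hess(N,I)$. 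The remaining point is exactly where the $\Theta$-ideal hypothesis is indispensable: $I$ is at once a lower ideal for $\preceq$ and upper-closed for $\preceq_\Theta$, which forces the ``$\alpha$-directions'' to stay inside $\Hess(N,I)$ --- equivalently, it makes the defining ideal of $\Hess(N,I)$ in $H^*(G/B)$ (presented via the ideal arrangement as in the work of Abe--Masuda--Murai--Sato with the author) $W_\Theta$-stable, since $\{\,\alpha\in I\setminus\Phi^+_\Theta\,\}$ then spans a $W_\Theta$-stable subarrangement; consequently $W_\Theta$ acts on $H^*(\Hess(N,I))$ and the restriction $H^*(G/B)\to H^*(\Hess(N,I))$ is $W_\Theta$-equivariant.

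For $(2)$ I would apply the Leray--Hirsch theorem to $\pi_I$. The fibre $P/B\cong\Levi_\Theta/(\Levi_\Theta\cap B)$ sits inside $\Hess(N,I)$ as a fibre of $\pi\colon G/B\to G/P$, so the composite $H^*(G/B)\to H^*(\Hess(N,I))\to H^*(P/B)$ equals the classical surjection $H^*(G/B)\onto H^*(P/B)$; hence $H^*(\Hess(N,I))\to H^*(P/B)$ is surjective, and Leray--Hirsch makes $H^*(\Hess(N,I))$ a free $H^*(\Hess_\Theta(N,I))$-module via $\pi_I^*$ (on any lift of a $\Q$-basis of $H^*(P/B)$), whence $\pi_I^*$ is injective. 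For $(3)$, consider the commutative square obtained by restricting $\pi$ to $\pi_I$, the vertical arrows being the restriction homomorphisms. Any class pulled back from $\Hess_\Theta(N,I)$ lifts to $H^*(G/P)$, hence lies in $\pi^*H^*(G/P)=H^*(G/B)^{W_\Theta}$, hence restricts --- equivariantly, by $(1)$ --- to a $W_\Theta$-invariant; this gives $\operatorname{im}\pi_I^*\subseteq H^*(\Hess(N,I))^{W_\Theta}$. Conversely, averaging over $W_\Theta$ shows the surjective equivariant map $H^*(G/B)\onto H^*(\Hess(N,I))$ carries $H^*(G/B)^{W_\Theta}=\pi^*H^*(G/P)$ onto $H^*(\Hess(N,I))^{W_\Theta}$; chasing the square, $H^*(\Hess(N,I))^{W_\Theta}=\pi_I^*\bigl(\text{restriction of }H^*(G/P)\bigr)\subseteq\operatorname{im}\pi_I^*$. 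The two inclusions give $\operatorname{im}\pi_I^*=H^*(\Hess(N,I))^{W_\Theta}$, and with injectivity from $(2)$ this is the asserted graded $\Q$-algebra isomorphism.

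The step I expect to be the real obstacle is $(1)$: pinning down the precise sense of the $W_\Theta$-action and proving it preserves $\Hess(N,I)$. The rest is a formal ``commutative square $+$ BGG $+$ Reynolds $+$ Leray--Hirsch'' package, but $(1)$ is genuinely where the $\Theta$-ideal structure enters (a general lower ideal is not $W_\Theta$-stable and its Hessenberg variety carries no such action); a secondary point one must be careful about is the surjectivity of $H^*(G/P)\onto H^*(\Hess_\Theta(N,I))$, which should be extracted from the affine paving of $\Hess_\Theta(N,I)$ or, if one prefers, deduced a posteriori from the argument itself once the dimension counts in Theorems~\ref{theorem:summation2_Intro} and \ref{theorem:product2_Intro} are invoked.
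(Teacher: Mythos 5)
Your plan is essentially the same as the paper's: establish the $W_\Theta$-action geometrically (the paper's Lemma~\ref{lemma:WTheta-action on X(x,H)}), then run a commutative-square argument combining the fibre bundle $\pi_I$ with fibre $P/B$, Leray--Hirsch, BGG's $\pi^*H^*(G/P)=H^*(G/B)^{W_\Theta}$, surjectivity of $j_I^*$ from \cite{AHMMS}, and a Reynolds average. The substance of parts~(2) and~(3) matches the paper's Proposition~\ref{proposition:main} and the proof of Theorem~\ref{theorem:main}.

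Two small remarks. For~(1), your decomposition $N=N_\Theta+N'$ is a detour: the paper observes more directly that right multiplication by a lift $z$ of $w\in W_\Theta$ has $z\in P=BW_\Theta B$, and $\Ad(P)$-stability of the $\p$-Hessenberg space $H_I$ immediately gives $\Ad((gz)^{-1})(N)\in H_I$; no decomposition of $N$ is needed, and $\Hess(N,I)=\pi^{-1}(\Hess_\Theta(N,I))$ follows at once from $\Hess(N_0,I)=G_{H_I}/B$ and $\Hess_\Theta(N_0,I)=G_{H_I}/P$. Your aside about $I\setminus\Phi^+_\Theta$ spanning a $W_\Theta$-stable subarrangement conflates the geometric action on the variety with the algebraic action on $\RR/\a(I)$; the latter is treated separately (Proposition~\ref{proposition:WTheta-action}, for the full $\AA_I$, not $\AA_{I\setminus\Phi^+_\Theta}$) and is not what part~(1) asserts. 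Second, your suggestion that surjectivity of $j_{I,\Theta}^*\colon H^*(G/P)\to H^*(\Hess_\Theta(N,I))$ can be ``read off from an affine paving'' is not correct as stated --- an affine paving of a subvariety does not by itself give surjectivity of restriction from the ambient space. The paper obtains it by comparing the Leray--Hirsch isomorphisms $\rho$ and $\rho_I$ in the commutative square and feeding in the nontrivial surjectivity $j_I^*\colon H^*(G/B)\onto H^*(\Hess(N,I))$ from Theorem~\ref{theorem:restriction_map_surjective_AHMMS}; that is also the route that your own diagram chase in part~(3) implicitly requires.
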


Note that if $I \subset \Phi^+$ is a $\Theta$-ideal, then the $W_\Theta$-action on $G/B$ preserves $\Hess(x,I)$ for arbitrary $x \in \g$ (not necessarily a regular nilpotent element). 
See Lemma~\ref{lemma:WTheta-action on X(x,H)} for the detail. 

There is an interesting connection between regular nilpotent Hessenberg varieties and certain hyperplane arrangements called ideal arrangements.
The connection was first studied by Sommers and Tymoczko in \cite{SoTy}, and Abe, Masuda, Murai, Sato with the author found a concrete connection in \cite{AHMMS}. 
We generalize the connection to regular nilpotent partial Hessenberg varieties.
For this purpose, we explain the work of \cite{AHMMS}.

Let $\t^*_{\Z}$ be a lattice which is identified with the character group of $T$.
We set $\t^*_{\Q}=\t^*_{\Z} \otimes_{\Z} \Q$ and its symmetric algebra $\RR = \Sym \t^*_{\Q}$. 
The $W$-action on $\t^*_{\Q}$ naturally extends a $W$-action on $\RR$. 
To each $\alpha \in \t^*_\Z$, we assign the line bundle $L_\alpha$ over the flag variety $G/B$. 
The mapping which sends $\alpha \in \t^*_\Z$ to the first Chern class $c_1(L_{\alpha}^*)$ of the dual line bundle $L_{\alpha}^*$ yields the ring homomorphism
\begin{align*} 
\varphi: \RR \rightarrow H^*(G/B); \ \ \ \alpha \mapsto c_1(L_{\alpha}^*) 
\end{align*} 
which doubles the grading on $\RR$.
The map $\varphi$ is a surjective $W$-equivariant map and its kernel is the ideal $(\RR^W_+)$ generated by the $W$-invariants in $\RR$ with zero constant term by Borel's theorem (\cite{Bor53}). 
In particular, this induces an isomorphism of graded $\Q$-algebras
\begin{align*} 
H^*(G/B) \cong \RR/(\RR^W_+).  
\end{align*} 
Let $I \subset \Phi^+$ be a lower ideal. 
We write $\varphi_I$ for the composition of $\varphi$ and the restriction map induced from the inclusion $\Hess(N,I) \subset G/B$.
By the result of \cite{AHMMS} the restriction map $H^*(G/B) \rightarrow H^*(\Hess(N,I))$ is surjective, so the map $\varphi_I$ is also surjective, i.e.  
\begin{align*} 
\varphi_I: \RR \twoheadrightarrow H^*(G/B) \twoheadrightarrow H^*(\Hess(N,I)).
\end{align*}
In \cite{AHMMS} the kernel of $\varphi_I$ is described in terms of the logarithmic derivation module of the ideal arrangement, as explained below. 
The \emph{derivation module} $\Der \RR$ of $\RR$ is the collection of all derivations of $\RR$ over $\Q$. 
For a lower ideal $I \subset \Phi^+$, we define the \emph{ideal arrangement} $\AA_I$ as the set of hyperplanes orthogonal to $\alpha \in I$.
The \emph{logarithmic derivation module} $D(\AA_I)$ of the ideal arrangement $\AA_I$ is defined by
\begin{align*}
D(\AA_I) = \{\der \in \Der \RR \mid \der(\alpha) \in \RR \, \alpha \ \textrm{for all} \ \alpha \in I \}, 
\end{align*}
which is an $\RR$-module.
Take a $W$-invariant non-degenerate quadratic form $Q \in \Sym^2 (\t^*_\Q)^W$.
Then we define an ideal $\a(I)$ of $\RR$ by 
\begin{align} \label{eq:a(I)}
\a(I) = \{\der(Q) \in \RR \mid \der \in D(\AA_I) \}, 
\end{align}
which is independent of a choice of $Q$.
By the result of \cite{AHMMS} the kernel of the surjective map $\varphi_I$ coincides with $\a(I)$. 
In particular, we have an isomorphism of graded $\Q$-algebras
\begin{align*} 
H^*(\Hess(N,I)) \cong \RR/\a(I).  
\end{align*} 
We generalize this result to regular nilpotent partial Hessenberg varieties. 

Let $I$ be a $\Theta$-ideal in $\Phi^+$. 
Considering the $W_\Theta$-invariants of the map $\varphi_I: \RR \twoheadrightarrow H^*(G/B) \twoheadrightarrow H^*(\Hess(N,I))$, we obtain $\varphi_I^{W_\Theta}: \RR^{W_\Theta} \twoheadrightarrow H^*(G/B)^{W_\Theta} \twoheadrightarrow H^*(\Hess(N,I))^{W_\Theta}$.
It follows from \eqref{eq:BGG_Intro} and Theorem~\ref{theorem:cohomology_Intro} that the map $\varphi_I^{W_\Theta}$ can be written as 
\begin{align*} 
\varphi_{I, \Theta}: \RR^{W_\Theta} \twoheadrightarrow H^*(G/P) \twoheadrightarrow H^*(\Hess_\Theta(N,I))
\end{align*}
where the first map is the composition of the isomorphism \eqref{eq:BGG_Intro} and $\varphi^{W_\Theta}: \RR^{W_\Theta} \twoheadrightarrow H^*(G/B)^{W_\Theta}$, and the second map is the restriction map induced from the inclusion $\Hess_\Theta(N,I) \subset G/P$. 
Our goal is to describe the kernel of $\varphi_{I, \Theta}$ in terms of $W_\Theta$-invariants in the logarithmic derivation module $D(\AA_I)$. 
The $W$-action on $\RR=\Sym \t^*_{\Q}$ induces a $W$-action on the derivation module $\Der \RR$. We set
\begin{align*}
(\Der \RR)^{W_\Theta} = \{\der \in \Der \RR \mid w \cdot \der = \der \ \textrm{for all} \ w \in W_\Theta \},
\end{align*}
which is an $\RR^{W_\Theta}$-module. 
We show that if $I \subset \Phi^+$ is a $\Theta$-ideal, then the $W_\Theta$-action on $\Der \RR$ preserves the logarithmic derivation module $D(\AA_I)$ (see Proposition~\ref{proposition:WTheta-action}).
For a $\Theta$-ideal $I \subset \Phi^+$, we define an $\RR^{W_\Theta}$-submodule of $(\Der \RR)^{W_\Theta}$ by 
\begin{align*}
D(\AA_I)^{W_\Theta} = D(\AA_I) \cap (\Der \RR)^{W_\Theta} = \{\der \in (\Der \RR)^{W_\Theta} \mid \der(\alpha) \in \RR \, \alpha \ \textrm{for all} \ \alpha \in I \}.
\end{align*}
Let $Q \in \Sym^2 (\t^*_\Q)^W$ be a $W$-invariant non-degenerate quadratic form.
Then we define an ideal $\a(I)_\Theta$ of $\RR^{W_\Theta}$ by
\begin{align*}
\a(I)_\Theta \coloneqq \{\der(Q) \in \RR^{W_\Theta} \mid \der \in D(\AA_I)^{W_\Theta} \}.\end{align*}
Note that $\der(Q)$ belongs to $\RR^{W_\Theta}$ for any $\der \in (\Der \RR)^{W_\Theta}$.
We also note that the ideal $\a(I)_\Theta$ does not dependend on a choice of $Q$. 

\begin{theorem}
For any $\Theta$-ideal $I \subset \Phi^+$, the kernel of the surjective map $\varphi_{I, \Theta}$ coincides with the ideal $\a(I)_\Theta$. 
In particular, $\varphi_{I,\Theta}$ induces the isomorphism
\begin{align*} 
H^*(\Hess_\Theta(N,I)) \cong \RR^{W_\Theta}/\a(I)_\Theta
\end{align*} 
as graded $\Q$-algebras.
\end{theorem}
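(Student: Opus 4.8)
The plan is to reduce the theorem to a single identity of ideals inside $\RR^{W_\Theta}$ and then to verify that identity by averaging over $W_\Theta$.

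First I would unwind the definition of $\varphi_{I,\Theta}$. As recorded in the discussion preceding the statement, under the isomorphism \eqref{eq:BGG_Intro} of Bernstein--Gelfand--Gelfand and the isomorphism $\pi_I^*\colon H^*(\Hess_\Theta(N,I))\xrightarrow{\,\cong\,}H^*(\Hess(N,I))^{W_\Theta}$ of Theorem~\ref{theorem:cohomology_Intro}, the surjection $\varphi_{I,\Theta}$ is identified with $\varphi_I^{W_\Theta}$, the restriction of $\varphi_I$ to $\RR^{W_\Theta}$. I would make this precise by checking that the square with top edge $\varphi_{I,\Theta}$, bottom edge $\varphi_I$, left edge the inclusion $\RR^{W_\Theta}\hookrightarrow\RR$ and right edge $\pi_I^*$ commutes; this comes down to the naturality of the first Chern classes of line bundles under the pullbacks $\Hess(N,I)\to\Hess_\Theta(N,I)$ and $G/B\to G/P$. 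Since $\pi_I^*$ is injective, commutativity gives
\[
\ker\varphi_{I,\Theta}=\RR^{W_\Theta}\cap\ker\varphi_I=\RR^{W_\Theta}\cap\a(I),
\]
where the last equality is the theorem of \cite{AHMMS} recalled above. Hence the statement reduces to proving the equality of ideals $\RR^{W_\Theta}\cap\a(I)=\a(I)_\Theta$.

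The inclusion $\a(I)_\Theta\subseteq\RR^{W_\Theta}\cap\a(I)$ is immediate from the definitions: an element $\der(Q)$ with $\der\in D(\AA_I)^{W_\Theta}$ lies in $\a(I)$ because $D(\AA_I)^{W_\Theta}\subseteq D(\AA_I)$, and it lies in $\RR^{W_\Theta}$ by the observation (already noted) that $\der(Q)\in\RR^{W_\Theta}$ for every $\der\in(\Der\RR)^{W_\Theta}$. For the reverse inclusion, let $f\in\RR^{W_\Theta}\cap\a(I)$ and write $f=\der_0(Q)$ for some $\der_0\in D(\AA_I)$. I would then average: set
\[
\der\ \coloneqq\ \frac{1}{|W_\Theta|}\sum_{w\in W_\Theta}w\cdot\der_0\ \in\ \Der\RR .
\]
By construction $\der\in(\Der\RR)^{W_\Theta}$; by Proposition~\ref{proposition:WTheta-action} the $W_\Theta$-action preserves $D(\AA_I)$, and $D(\AA_I)$ is an $\RR$-module, so $\der\in D(\AA_I)$ and hence $\der\in D(\AA_I)^{W_\Theta}$. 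Since $Q$ is $W$-invariant, for each $w\in W_\Theta$ one has $(w\cdot\der_0)(Q)=w\bigl(\der_0(w^{-1}(Q))\bigr)=w\bigl(\der_0(Q)\bigr)=w(f)$, so
\[
\der(Q)=\frac{1}{|W_\Theta|}\sum_{w\in W_\Theta}w(f)=f,
\]
the last equality because $f\in\RR^{W_\Theta}$. Therefore $f=\der(Q)\in\a(I)_\Theta$, which proves $\RR^{W_\Theta}\cap\a(I)\subseteq\a(I)_\Theta$ and hence $\ker\varphi_{I,\Theta}=\a(I)_\Theta$. The isomorphism $H^*(\Hess_\Theta(N,I))\cong\RR^{W_\Theta}/\a(I)_\Theta$ of graded $\Q$-algebras then follows from the surjectivity of $\varphi_{I,\Theta}$ (already established) together with the first isomorphism theorem, the grading being respected because $\varphi$, and hence $\varphi_{I,\Theta}$, doubles degrees.

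The main thing to be careful about is the reduction in the second paragraph: one must confirm that the various identifications (the map $\varphi$ and its restriction $\varphi^{W_\Theta}$, the isomorphism \eqref{eq:BGG_Intro}, Theorem~\ref{theorem:cohomology_Intro}, and the restriction maps to the Hessenberg subvarieties) are mutually compatible, so that $\ker\varphi_{I,\Theta}$ really equals $\RR^{W_\Theta}\cap\a(I)$ and not something larger; this also quietly uses that taking $W_\Theta$-invariants is exact over $\Q$, which is what makes $\varphi_I^{W_\Theta}$ surjective and lets the kernel computation descend. Everything after that is the short averaging argument above, which goes through verbatim because $|W_\Theta|$ is invertible in $\Q$ and $Q$ is $W$-invariant; I do not anticipate any obstacle in that part.
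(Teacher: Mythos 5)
Your proposal is correct and follows essentially the same route as the paper: reduce to the identity $\ker\varphi_{I,\Theta}=\a(I)\cap\RR^{W_\Theta}$ via the commutative diagram and Theorem~\ref{theorem:AHMMS}, then establish $\a(I)\cap\RR^{W_\Theta}=\a(I)_\Theta$ by averaging a derivation over $W_\Theta$ and using that $Q$ is $W$-invariant and that the $W_\Theta$-action preserves $D(\AA_I)$. The only difference is cosmetic: you flag the compatibility of the identifications and the naturality of Chern classes explicitly, whereas the paper treats this as implicit in the definition of $\varphi_{I,\Theta}$ and the surrounding discussion.
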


The paper is organized as follows.
After reviewing the definitions for partial Hessenberg varieties and $\Theta$-ideals in Section~\ref{sect:setting}, we give a summation formula and a product formula for the Poincar\'e polynomial of regular nilpotent partial Hessenberg varieties in Sections~\ref{sect:summation formula} and \ref{sect:product formula}, respectively. 
In Section~\ref{sect:cohomology} we describe the cohomology rings of regular nilpotent partial Hessenberg varieties as the $W_\Theta$-invariants in the cohomology rings of regular nilpotent Hessenberg varieties.
We finally provide an expression of the cohomology rings for regular nilpotent partial Hessenberg varieties in terms of $W_\Theta$-invariants in the logarithmic derivation modules of ideal arrangements in Section~\ref{sect:invariants logarithmic derivation modules}.

\bigskip
\noindent \textbf{Acknowledgements.}  
The author is grateful to Hiraku Abe, Mikiya Masuda, Takashi Sato, and Haozhi Zeng for a valuable discussion.
The author is supported in part by JSPS KAKENHI Grant-in-Aid for Early-Career Scientists: 23K12981.

\bigskip

\section{Setting} \label{sect:setting}

In this section we define partial Hessenberg spaces and partial Hessenberg varieties. Then we introduce a notion of $\Theta$-ideals which are in one-to-one correspondence to partial Hessenberg spaces.

\subsection{Partial Hessenberg varieties}

Throughout the article, we introduce the notation below as explained in Introduction. 
\begin{enumerate}
\item[$\bullet$] $G:$ a semisimple linear algebraic group over $\C$ of rank $n$.
\item[$\bullet$] $B:$ a fixed Borel subgroup of $G$.
\item[$\bullet$] $U:$ the unipotent radical of $B$.
\item[$\bullet$] $T:$ a maximal torus of $G$ in $B$.
\item[$\bullet$] $\t \subset \b \subset \g:$ the Lie algebras of $T \subset B \subset G$ respectively.
\item[$\bullet$] $\uu:$ the Lie algebras of $U$.
\item[$\bullet$] $\t^*:$ the dual space of $\t$.
\item[$\bullet$] $\Phi \subset \t^*:$ the root system of $\t$ in $\g$.
\item[$\bullet$] $\Phi^+:$ the set of positive roots in $\Phi$, i.e. the set of roots of $\t$ in $\uu$.
\item[$\bullet$] $\Delta=\{\alpha_1,\ldots,\alpha_n\}:$ the set of simple roots.
\item[$\bullet$] $\g_\alpha:$ the root space associated to a root $\alpha \in \Phi$.
\item[$\bullet$] $E_\alpha:$ a basis of $\g_\alpha$.
\item[$\bullet$] $\height(\alpha):$ the height of a root $\alpha$, i.e. $\height(\alpha)=\sum_{i=1}^n c_i$ when we write $\alpha = \sum_{i=1}^n c_i \alpha_i$.
\item[$\bullet$] $W=N_G(T)/T:$ the Weyl group of $G$ where $N_G(T) = \{g \in G \mid gT =Tg \}$ is the normalizer of $T$.
\item[$\bullet$] $s_\alpha \in W:$ the reflection associated to a root $\alpha$.
\item[$\bullet$] $s_i \in W:$ the simple reflection associated with the simple root $\alpha_i$.
\end{enumerate}

We also frequently write
\begin{align*}
[n] \coloneqq \{1,2,\ldots,n\}.
\end{align*}
A subspace $H \subset \g$ is called a \emph{Hessenberg space} if $H$ is $\b$-submodule and $H$ contains $\b$. 
For $x \in \g$ and a Hessenberg space $H \subset \g$, the \emph{Hessenberg variety} $\Hess(x,H)$ is defined to be the following subvariety of the flag variety $G/B$:
\begin{align*}
\Hess(x,H) \coloneqq \{gB \in G/B \mid \Ad(g^{-1})(x) \in H \}.
\end{align*}
An element $x \in \g$ is \emph{nilpotent} if $\ad(X)$ is nilpotent, i.e., $\ad(X)^k=0$ for some $k > 0$.
An element $x \in \g$ is \emph{regular} if its $G$-orbit of the adjoint action has the largest possible dimension.
If $x$ is a regular nilpotent element $N$ of $\g$, then $\Hess(N,H)$ is called a \emph{regular nilpotent Hessenberg variety}. 
We note that any two regular nilpotent Hessenberg varieties are isomorphic (\cite[Lemma~5.1]{HaTy17}).

Fix a subset $\Theta \subset \Delta$ and we use the following notation throughout the manuscript.
\begin{enumerate}
\item[$\bullet$] $\Phi_\Theta^+:$ the set of positive roots which are linear combinations of roots in $\Theta$.
\item[$\bullet$] $\Levi_\Theta:$ the subgroup of $G$ generated by $T$ together with the subgroups $U_\alpha=\{\exp tE_\alpha \mid t \in \C \}$ for $\alpha \in \Phi_\Theta^+ \cup (-\Phi_\Theta^+)$ where $-\Phi^+_\Theta=\{-\alpha \in \Phi \mid \alpha \in \Phi^+_\Theta \}$.
\item[$\bullet$] $U_\Theta:$ the subgroup of $U$ generated by $U_\alpha$ for $\alpha \in \Phi^+ \setminus \Phi_\Theta^+$.
\item[$\bullet$] $P=P_\Theta \coloneqq \Levi_\Theta \ltimes U_\Theta:$ the parabolic subgroup of $G$ containing $B$.
\item[$\bullet$] $\p=\p_\Theta:$ the Lie algebra of $P$.
\item[$\bullet$] $W_\Theta:$ the subgroup of $W$ generated by $s_i=s_{\alpha_i}$ for $\alpha_i \in \Theta$.
\end{enumerate}

A subspace $H \subset \g$ is called a \emph{$\p$-Hessenberg space} (or a \emph{partial Hessenberg space}) if $H$ is $\p$-submodule and $H$ contains $\p$. 
Note that a $\p$-Hessenberg space is a (usual) Hessenberg space since $\p \supset \b$.
For $x \in \g$ and a $\p$-Hessenberg space $H$, we define the \emph{partial Hessenberg variety} $\Hess_\Theta(x,H)$ as the following subvariety of the partial flag variety $G/P$:
\begin{align*}
\Hess_\Theta(x,H) \coloneqq \{gP \in G/P \mid \Ad(g^{-1})(x) \in H \}.
\end{align*}
Note that for $x, x' \in \g$ with $x'=\Ad(g')(x)$ for some $g' \in G$, we have the isomorphism $\Hess_\Theta(x,H) \cong \Hess_\Theta(x',H)$ which sends $gP$ to $g'gP$.
If a subset $\Theta \subset \Delta$ is the empty set, then $P=B$ and we write $\Hess_\Theta(x,H)$ by $\Hess(x,H)$.
We say that $\Hess_\Theta(N,H)$ is a \emph{regular nilpotent partial Hessenberg variety} when $N$ is a regular nilpotent element of $\g$. 
Recall that we fix a basis $E_\alpha$ for each root space $g_\alpha$. Let $N_0$ be the regular nilpotent element of the form 
\begin{align} \label{eq:N0}
N_0 \coloneqq \sum_{i=1}^n E_{\alpha_i}.
\end{align}
Since we can write $N_0=\Ad(g_0)(N)$ for some $g_0 \in G$, we have
\begin{align} \label{eq:Hess(N0,H)Theta}
\Hess_\Theta(N,H) \cong \Hess_\Theta(N_0,H)
\end{align}
which sends $gP$ to $g_0gP$ (cf. \cite[Lemma~5.1]{HaTy17}).
In particular, any two regular nilpotent partial Hessenberg varieties are isomorphic.

\begin{remark}
Let $H \subset \g$ be a $\p$-Hessenberg space. 
Then the partial Hessenberg variety $\Hess_\Theta(x,H)$ is the fiber of $G \times_P H \rightarrow \g; \ [g,y] \mapsto \Ad(g)(y)$ over $x \in \g$.
Here, $G \times_P H$ denotes the quotient of the direct product $G \times H$ by the left $P$-action given by $p \cdot (g,y) = (gp^{-1},\Ad(p)(y))$ for $g \in G$, $y \in H$, and $p \in P$. 
In particular, partial Hessenberg varieties over elements in the same adjoint orbit are isomorphic. 
\end{remark}

\subsection{$\Theta$-ideals} \label{subsection:Theta-ideals}
A \emph{lower ideal} $I \subset \Phi^+$ is a collection of positive roots such that if $\alpha \in I, \beta \in \Phi^+$, and $\alpha-\beta \in \Phi^+$, then $\alpha-\beta \in I$. 
The partial order $\preceq$ on $\Phi$ is defined as follows:
\begin{align*}
\textrm{for} \ \alpha, \beta \in \Phi, \ \alpha \preceq \beta \iff \beta-\alpha \ \textrm{is a sum of positive roots.}
\end{align*}
Then, a subset $I \subset \Phi^+$ is a lower ideal if and only if it satisfies the following condition:
\begin{align} \label{eq:lower ideal}
\alpha \in I, \gamma \in \Phi^+, \gamma \preceq \alpha \Rightarrow \gamma \in I.
\end{align} 
To each lower ideal $I$ we assign a subspace $H_I \subset \g$ defined by
\begin{align} \label{eq:HI}
H_I \coloneqq \b \oplus \bigoplus_{\alpha \in I} \g_{-\alpha}.
\end{align}
One can easily see that $H_I$ is a Hessenberg space and there is a one-to-one correspondence as follows:
\begin{align*} 
\{\textrm{lower ideals in} \ \Phi^+ \} \xrightarrow{1:1} \{\textrm{Hessenberg spaces in} \ \g \}; \ I \mapsto H_I.
\end{align*}

We now explain a notion of $\Theta$-ideals which are in one-to-one correspondence to $\p$-Hessenberg spaces.
For this purpose, we introduce a partial order $\preceq_\Theta$ on $\Phi$ with respset to $\Theta$ as follows:
\begin{align*}
\textrm{for} \ \alpha, \beta \in \Phi, \ \alpha \preceq_\Theta \beta \iff \beta-\alpha \ \textrm{is a sum of positive roots in} \ \Phi^+_\Theta. 
\end{align*}
We define an \emph{upper ideal with respect to $\Theta$} as a collection of positive roots $I \subset \Phi^+$ such that if $\alpha \in I, \beta \in \Phi^+_\Theta$, and $\alpha+\beta \in \Phi^+$, then $\alpha+\beta \in I$. 
The following lemma can be proved by a similar argument of the equivalent condition \eqref{eq:lower ideal} for lower ideals.
We provide a proof for the readers' convenience.

\begin{lemma}
A subset $I \subset \Phi^+$ is an upper ideal with respect to $\Theta$ if and only if it satisfies the following condition:
\begin{align} \label{eq:Theta upper ideal}
\alpha \in I, \gamma \in \Phi^+, \alpha \preceq_\Theta \gamma \Rightarrow \gamma \in I.
\end{align} 
\end{lemma}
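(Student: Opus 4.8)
The plan is to mimic the standard argument that shows a lower-closed subset of $\Phi^+$ is equivalent to being closed under $\preceq$. The forward direction is trivial unwinding of definitions, and the reverse direction reduces to a "one step at a time" induction on the height of $\gamma - \alpha$ with respect to $\Theta$. First I would prove the implication \eqref{eq:Theta upper ideal} $\Rightarrow$ (upper ideal with respect to $\Theta$): if $\alpha \in I$, $\beta \in \Phi^+_\Theta$, and $\alpha+\beta \in \Phi^+$, then $\alpha \preceq_\Theta \alpha + \beta$ by definition of $\preceq_\Theta$ (since $(\alpha+\beta) - \alpha = \beta$ is a single positive root in $\Phi^+_\Theta$, hence a sum of such), so $\alpha + \beta \in I$ by \eqref{eq:Theta upper ideal}. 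That direction is one line.

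The substantive direction is the converse. Suppose $I$ is an upper ideal with respect to $\Theta$, and take $\alpha \in I$, $\gamma \in \Phi^+$ with $\alpha \preceq_\Theta \gamma$; I want $\gamma \in I$. By definition of $\preceq_\Theta$, write $\gamma - \alpha = \beta_1 + \cdots + \beta_k$ with each $\beta_j \in \Phi^+_\Theta$ (allowing repetitions). I would induct on $k$. For $k = 0$ we have $\gamma = \alpha \in I$. For the inductive step, the key point is a standard root-system fact: if $\alpha \in \Phi$, $\gamma \in \Phi$, and $\gamma - \alpha$ is a nonempty sum of positive roots $\beta_1, \ldots, \beta_k$ all lying in the sub-root-system $\Phi_\Theta := \Phi^+_\Theta \cup (-\Phi^+_\Theta)$, then after reordering $\alpha + \beta_1 + \cdots + \beta_j \in \Phi$ for every $0 \le j \le k$. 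This is the usual lemma that lets one pass between two roots whose difference is a sum of positive roots via a chain of roots, each obtained from the previous by adding one positive root; applied inside the root subsystem spanned by $\Theta$ it gives a chain $\alpha = \delta_0, \delta_1, \ldots, \delta_k = \gamma$ with $\delta_{j} - \delta_{j-1} \in \Phi^+_\Theta$ and each $\delta_j \in \Phi$. Since $\delta_j = \alpha + (\text{sum of positive roots})$ and $\alpha \in \Phi^+$, each $\delta_j$ is actually in $\Phi^+$. Now apply the upper-ideal hypothesis $k$ times along this chain: $\delta_0 = \alpha \in I$, and if $\delta_{j-1} \in I$ with $\delta_j - \delta_{j-1} \in \Phi^+_\Theta$ and $\delta_j \in \Phi^+$, then $\delta_j \in I$; hence $\gamma = \delta_k \in I$.

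I expect the main obstacle to be cleanly justifying the chain lemma in the $\Theta$-relative setting — i.e. that one can always reorder the $\beta_j$ so that every partial sum $\alpha + \beta_1 + \cdots + \beta_j$ is a root. The cleanest route is to observe that $\Phi_\Theta$ is itself a root system (with positive system $\Phi^+_\Theta$) living in the span of $\Theta$, that $\alpha$ need not lie in this span but the differences do, and then invoke the classical result (e.g. the standard fact used to prove that ideals in the root poset are characterized by downward closure, cf. Bourbaki) for the pair $\alpha, \gamma$: since $\gamma - \alpha$ is a sum of positive roots, there is an ordering making all partial sums roots. One should be slightly careful that $\alpha$ itself may not be a root of $\Phi_\Theta$, but the argument only uses that $\langle \alpha, \beta^\vee \rangle$ is an integer for $\beta \in \Phi_\Theta$ and that $\gamma - \alpha \in \Phi_\Theta^+$-span, so the inductive extraction of a single $\beta_j$ with $\alpha + \beta_j \in \Phi$ goes through verbatim. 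Once this lemma is in hand, the rest is a routine finite induction, so I would state the chain fact explicitly (perhaps as a one-line citation to a standard reference) and then write the two short inductions.
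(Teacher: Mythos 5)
Your overall strategy is the same as the paper's: reduce to a ``one step at a time'' argument, where each step adds an element of $\Phi^+_\Theta$ and lands in $\Phi^+$, and then apply the upper-ideal hypothesis along the resulting chain. The forward direction is identical and fine. For the converse, however, the load-bearing step in your proposal is the ``chain lemma,'' and your treatment of it has two weaknesses.

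First, you present the chain lemma as a citable classical fact (``the usual lemma \dots cf.\ Bourbaki''). The classical statement concerns two positive roots $\alpha \preceq \gamma$ in the full root poset, with chain steps given by \emph{simple} roots of $\Phi$. What you need here is genuinely different: $\alpha$ and $\gamma$ live in $\Phi^+$ but need not lie in the $\Theta$-span, the difference lies in that span, and the steps are taken from $\Phi^+_\Theta$. This version is not in the standard references and has to be proved; the paper's proof of the lemma \emph{is} that proof. So nothing is actually saved by the citation.

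Second, your sketch of why the chain lemma holds is incomplete. You say ``the inductive extraction of a single $\beta_j$ with $\alpha + \beta_j \in \Phi$ goes through verbatim.'' That extraction works only when $(\alpha, \beta_j) < 0$ for some $j$. When $(\alpha, \gamma - \alpha) \geq 0$ it can fail: no summand need satisfy $\alpha + \beta_j \in \Phi$. In that case one must instead observe that $(\gamma - \alpha, \gamma) > 0$ and either extract a $\beta_j$ from the $\gamma$-end (i.e.\ $\gamma - \beta_j \in \Phi^+$) or, as the paper does more economically, conclude directly from $(\alpha,\gamma) > 0$ and Humphreys' Lemma~9.4 that $\gamma - \alpha \in \Phi$, hence $\gamma - \alpha \in \Phi^+_\Theta$, and apply the upper-ideal hypothesis in one stroke. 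Once you fix this, your argument is correct, and it is essentially the paper's argument (descending induction on $\height(\alpha)$, inner-product dichotomy, Lemma~9.4), just with the mild extra generality of allowing non-simple $\beta_j \in \Phi^+_\Theta$ as chain steps, which costs you the additional $\gamma$-side case without buying anything. The paper's choice to step only by simple roots of $\Theta$ and to handle the case $\gamma - \alpha \in \Phi^+_\Theta$ directly is the cleaner bookkeeping.
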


\begin{proof}
It is clear that if $I$ satisfies the condition \eqref{eq:Theta upper ideal}, then $I$ is an upper ideal with respect to $\Theta$.
In fact, if $\alpha \in I, \beta \in \Phi^+_\Theta$, and $\alpha+\beta \in \Phi^+$, then $\alpha \preceq_\Theta \alpha+\beta$. By \eqref{eq:Theta upper ideal} we have $\alpha+\beta \in I$. 

We show the opposite implication. Assume that $I$ is an upper ideal with respect to $\Theta$.
We take arbitrary $\gamma \in \Phi^+$. 
Then for any $\alpha \in I$ with $\alpha \preceq_\Theta \gamma$, we show that $\gamma \in I$ by a descending induction on $\height(\alpha)$. 
The base case is $\alpha=\gamma$, which is clear. 
Now assume that $\height(\alpha) < \height(\gamma)$ and that the claim holds for any $\alpha' \in I$ with $\alpha' \preceq_\Theta \gamma$ and $\height(\alpha')=\height(\alpha)+1$. 
Noting that $\alpha + (\gamma -\alpha)=\gamma \in \Phi^+$, if $\gamma - \alpha \in \Phi^+_\Theta$, then we have $\gamma \in I$ since $I$ is an upper ideal with respect to $\Theta$.
Consider the case when $\gamma - \alpha \notin \Phi^+_\Theta$.
Since $\alpha \preceq_\Theta \gamma$, one can write $\gamma-\alpha = \sum_{i \in M} c_i \alpha_i \ (c_i>0)$ for some $M \subset [n]$ with $\{\alpha_i \mid i \in M \} \subset \Theta$.
Let $( \ , \ )$ be the $W$-invariant inner product on the $\R$-span of $\Phi$ transferred from the Killing form of $\g$ restricted to $\t$. 
We claim that $(\alpha,\alpha_i)<0$ for some $i \in M$.
In fact, if we suppose that $(\alpha,\alpha_i) \geq 0$ for all $i \in M$, then $(\alpha,\gamma-\alpha)=\sum_{i \in M} c_i (\alpha,\alpha_i) \geq 0$ which implies $(\alpha, \gamma) \geq (\alpha, \alpha) > 0$.
By \cite[Lemma~9.4]{Hum72} we have $\gamma-\alpha \in \Phi^+$.
This means that $\gamma-\alpha$ is a positive root that are linear combinations of roots in $\Theta$, namely $\gamma-\alpha \in \Phi^+_\Theta$. This is a contradiction. 
Hence, we have $(\alpha,\alpha_i)<0$ for some $i \in M$.
Setting $\alpha' = \alpha+\alpha_i$, one has $\alpha' \in \Phi^+$ by \cite[Lemma~9.4]{Hum72} again. 
Since $\alpha \in I$ and $\alpha_i \in \Theta \subset \Phi^+_\Theta$, we also have $\alpha' \in I$ by the assumption that $I$ is an upper ideal with respect to $\Theta$. 
It is clear that $\alpha' \preceq_\Theta \gamma$ and $\height(\alpha')=\height(\alpha)+1$.
By our descending induction hypothesis on $\height(\alpha)$, we have $\gamma \in I$ as desired.
\end{proof}

\begin{definition}
We say that $I \subset \Phi^+$ is a \emph{$\Theta$-ideal} if $I$ satisfies the following three conditions:
\begin{enumerate}
\item[(1)] $I$ is a lower ideal;
\item[(2)] $I$ is an upper ideal with respect to $\Theta$;
\item[(3)] $I \supset \Phi^+_\Theta$. 
\end{enumerate}
\end{definition}

\begin{remark}
If $\Theta$ is the empty set, then the set of $\Theta$-ideals coincides with the set of lower ideals. 
\end{remark}

\begin{lemma} \label{lemma:one-to-one partial}
Let $\Theta \subset \Delta$ and $\p=\p_\Theta$. 
Then there is a one-to-one correspondence
\begin{align} \label{eq:one-to-one partial}
\{\Theta\textrm{-ideals in} \ \Phi^+ \} \xrightarrow{1:1} \{\p\textrm{-Hessenberg spaces in} \ \g \}
\end{align}
which sends $I$ to $H_I$ defined in \eqref{eq:HI}.
In other words, we have the following commutative diagram:
\begin{center}
\begin{tikzcd}
\{\textrm{lower ideals in} \ \Phi^+ \} \arrow[r, "1:1"] &[0.5em] \{\textrm{Hessenberg spaces in} \ \g \} \\
\{\Theta\textrm{-ideals in} \ \Phi^+ \} \arrow[r, "1:1"']  \arrow[hookrightarrow,u, "{\rm inclusion}"]                                         & \{\p\textrm{-Hessenberg spaces in} \ \g \} \arrow[hookrightarrow,u, "{\rm inclusion}"'] 
\end{tikzcd}
\end{center}
\end{lemma}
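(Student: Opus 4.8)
The plan is to reduce everything to the one-to-one correspondence $I \mapsto H_I$ between lower ideals in $\Phi^+$ and Hessenberg spaces in $\g$ recalled just above. Every $\Theta$-ideal is in particular a lower ideal, and every $\p$-Hessenberg space is in particular a Hessenberg space (since $\p \supset \b$); moreover $I \mapsto H_I$ is already known to be a bijection on these larger classes. Hence it will be enough to prove: for a lower ideal $I \subset \Phi^+$, the space $H_I$ is a $\p$-Hessenberg space if and only if $I$ also satisfies conditions (2) and (3) in the definition of a $\Theta$-ideal. Granting this, the bottom arrow in the diagram is a bijection and the diagram commutes, both formally, since the two horizontal arrows are $I \mapsto H_I$ and the two vertical arrows are inclusions.

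To prove that equivalence I would first record the vector-space decomposition $\p = \p_\Theta = \b \oplus \bigoplus_{\beta \in \Phi^+_\Theta} \g_{-\beta}$; comparing it with $H_I = \b \oplus \bigoplus_{\alpha \in I} \g_{-\alpha}$ shows that $\p \subseteq H_I$ holds exactly when $\Phi^+_\Theta \subseteq I$, i.e. condition (3). It then remains, for a lower ideal $I$ containing $\Phi^+_\Theta$, to see that $H_I$ is a $\p$-submodule of $\g$ if and only if $I$ is an upper ideal with respect to $\Theta$. For the forward direction: since $[\p, H_I] = [\b, H_I] + \sum_{\beta \in \Phi^+_\Theta} [\g_{-\beta}, H_I]$ and $H_I$ is already $\b$-stable (it is a Hessenberg space), it suffices to check $[\g_{-\beta}, H_I] \subseteq H_I$ for each $\beta \in \Phi^+_\Theta$. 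Here $[\g_{-\beta}, \b] \subseteq H_I$: indeed $[\g_{-\beta}, \t] = \g_{-\beta} \subseteq H_I$ because $\beta \in \Phi^+_\Theta \subseteq I$, while for $\gamma \in \Phi^+$ the bracket $[\g_{-\beta}, \g_\gamma] \subseteq \g_{\gamma - \beta}$ is zero, or lies in $\b \subseteq H_I$ when $\gamma - \beta \in \Phi^+ \cup \{0\}$, or equals a root space $\g_{-(\beta - \gamma)} \subseteq H_I$ when $\beta - \gamma \in \Phi^+$ (using $\beta \in I$ and that $I$ is a lower ideal). And for $\alpha \in I$ the bracket $[\g_{-\beta}, \g_{-\alpha}] \subseteq \g_{-(\alpha+\beta)}$ is nonzero only when $\alpha + \beta \in \Phi^+$, in which case $\alpha + \beta \in I$ precisely because $I$ is an upper ideal with respect to $\Theta$. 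Thus $[\p, H_I] \subseteq H_I$.

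For the converse I would start from a $\p$-Hessenberg space $H$; by the known correspondence $H = H_I$ for a unique lower ideal $I$, and $\p \subseteq H$ forces $\Phi^+_\Theta \subseteq I$, which is condition (3). For condition (2), given $\alpha \in I$ and $\beta \in \Phi^+_\Theta$ with $\alpha + \beta \in \Phi^+$, note $\g_{-\alpha} \subseteq H$ and $\g_{-\beta} \subseteq \p$, so $[\g_{-\beta}, \g_{-\alpha}] \subseteq [\p, H] \subseteq H = H_I$; since $-(\alpha + \beta)$ is a root, the standard structural fact that $[\g_\gamma, \g_\delta] = \g_{\gamma + \delta}$ whenever $\gamma + \delta \in \Phi$ (see, e.g., \cite{Hum72}) yields $\g_{-(\alpha + \beta)} = [\g_{-\beta}, \g_{-\alpha}] \subseteq H_I$, so $\alpha + \beta \in I$. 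This proves the equivalence, and with it the lemma.

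I expect the whole proof to be routine bookkeeping with root sums, entirely parallel to the verification of \eqref{eq:lower ideal} for lower ideals and to the lemma just proved for upper ideals with respect to $\Theta$. The only point where a genuine structural input enters --- and hence the step to write out with some care --- is the non-vanishing $[\g_{-\beta}, \g_{-\alpha}] = \g_{-(\alpha + \beta)}$ used in the converse; everything else is linear algebra with the root-space decomposition.
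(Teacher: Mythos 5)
Your proposal is correct and follows essentially the same route as the paper: compare $\p=\b\oplus\bigoplus_{\beta\in\Phi^+_\Theta}\g_{-\beta}$ with $H_I$ to get $\p\subseteq H_I\iff\Phi^+_\Theta\subseteq I$, reduce $\p$-stability to $[\g_{-\beta},H_I]\subseteq H_I$ for $\beta\in\Phi^+_\Theta$, and handle the key bracket $[\g_{-\beta},\g_{-\alpha}]=\g_{-(\alpha+\beta)}$ via the upper-ideal condition in both directions. The only cosmetic difference is that you re-derive $[\g_{-\beta},\b]\subseteq H_I$ by a case analysis on $\gamma-\beta$, whereas the paper observes directly that $\g_{-\beta}\subseteq H_I$ and $H_I$ is $\b$-stable, so $[\g_{-\beta},\b]=-[\b,\g_{-\beta}]\subseteq H_I$; both are fine.
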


\begin{proof}
Let $I$ be a $\Theta$-ideal. 
We first show that $H_I = \b \oplus \bigoplus_{\alpha \in I} \g_{-\alpha}$ is a $\p$-Hessenberg space. 
Since $\p=\p_\Theta=\b \oplus \bigoplus_{\beta \in \Phi^+_\Theta} \g_{-\beta}$ and $I \supset \Phi^+_\Theta$, one has $H_I \supset \p$. 
In order to show that $H_I$ is a $\p$-submodule, it suffices to show that $[\g_{-\beta}, H_I] \subset H_I$ for all $\beta \in \Phi^+_\Theta$ since we already know that $H_I$ is a $\b$-submodule by the condition that $I$ is a lower ideal.
We see that $[\g_{-\beta},\b]=-[\b,\g_{-\beta}] \subset -[\b,H_I] \subset H_I$ for any $\beta \in \Phi^+_\Theta$ since $H_I$ is a $\b$-submodule, so we need to check that $[\g_{-\beta},\g_{-\alpha}] \subset H_I$ for all $\alpha \in I$ and $\beta \in \Phi^+_\Theta$ with $\alpha+\beta \in \Phi^+$. 
By the condition that $I$ is an upper ideal with respect to $\Theta$, we have $\alpha+\beta \in I$, which implies $[\g_{-\beta},\g_{-\alpha}] = \g_{-(\alpha+\beta)} \subset H_I$. 
Hence, $H_I$ is a $\p$-Hessenberg space. 

The map $I \mapsto H_I$ is clearly injective, so we prove the surjectivity for the map. 
Let $H$ be a $\p$-Hessenberg space. Since $H$ contains $\p=\b \oplus \bigoplus_{\beta \in \Phi^+_\Theta} \g_{-\beta}$, we can write $H=H_I$ for some unique lower ideal $I \supset \Phi^+_\Theta$.  
We prove that $I$ is an upper ideal with respect to $\Theta$.
If $\alpha \in I$ and $\beta \in \Phi^+_\Theta$ with $\alpha+\beta \in \Phi^+$, then we have $\g_{-(\alpha+\beta)} = [\g_{-\beta},\g_{-\alpha}] \subset [\p,H_I] \subset H_I$ since $H=H_I$ is a $\p$-submodule.
This implies that $\alpha+\beta \in I$, so $I$ is an upper ideal with respect to $\Theta$ and hence $I$ is a $\Theta$-ideal as desired. 
\end{proof}

Fix a regular nilpotent element $N \in \g$.
By Lemma~\ref{lemma:one-to-one partial}, every regular nilpotent partial Hessenberg variety can be written as $\Hess_\Theta(N,H_I)$ for some unique $\Theta$-ideal $I$. 
From next section we will write $\Hess_\Theta(N,I)\coloneqq\Hess_\Theta(N,H_I)$ for simplicity, i.e. 
\begin{align} \label{eq:Hess(N,I)Theta}
\Hess_\Theta(N,I) = \{gP \in G/P \mid \Ad(g^{-1})(N) \in H_I \}.
\end{align}
In particular, if $\Theta$ is the empty set, i.e. $P=B$, then we denote $\Hess_\Theta(N,I)$ by $\Hess(N,I)$, namely
\begin{align} \label{eq:Hess(N,I)}
\Hess(N,I) = \{gP \in G/B \mid \Ad(g^{-1})(N) \in H_I \}.
\end{align}

We record some properties for regular nilpotent Hessenberg varieties as follows.

\begin{proposition} \label{proposition:propertiesHess(N,I)} $($\cite{AFZ, Pre13, Pre18}$)$
Let $N$ be a regular nilpotent element in $\g$ and $I$ a lower ideal.
Then the associated regular nilpotent Hessenberg variety $\Hess(N,I)$ is irreducible and its complex dimension is given by $\dim_\C \Hess(N,I) = |I|$. 
\end{proposition}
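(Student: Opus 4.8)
The statement to prove is Proposition~\ref{proposition:propertiesHess(N,I)}: for a regular nilpotent $N$ and a lower ideal $I$, the variety $\Hess(N,I)$ is irreducible of complex dimension $|I|$. This is cited from \cite{AFZ, Pre13, Pre18}, so the intended ``proof'' is a pointer to the literature together with a brief indication of how the statements there combine. I will organize it accordingly.

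First I would recall the affine paving. By the result of Tymoczko and Precup (\cite{Pre13}), using the regular nilpotent $N_0 = \sum_{i} E_{\alpha_i}$ of \eqref{eq:Hess(N0,H)Theta} and the isomorphism $\Hess(N,I) \cong \Hess(N_0,I)$, the Hessenberg variety $\Hess(N_0,I)$ is paved by affine cells indexed by a subset of the Weyl group $W$, where the cell attached to $w$ has dimension $|\NN(w) \cap I|$ (this is exactly the combinatorial data already appearing in \eqref{eq:summation1_Intro} with $\Theta = \emptyset$). The identity cell (indexed by $w = e$) is the open cell; more precisely the ``big cell'' $B\cdot B/B \cap \Hess(N_0,I)$ is isomorphic to the affine space $\bigoplus_{\alpha \in I}\g_{-\alpha}$ of dimension $|I|$, since for $g = \exp(\sum_{\alpha \in I} t_\alpha E_{-\alpha})$ one checks directly that $\Ad(g^{-1})(N_0) \in H_I$ always holds. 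Hence $\dim_\C \Hess(N,I) \geq |I|$, and because every cell has dimension $\le |I|$ (as $|\NN(w)\cap I| \le |I|$), in fact $\dim_\C \Hess(N,I) = |I|$, with a unique cell of top dimension.

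For irreducibility I would invoke \cite[]{AFZ} (Abe--Fujita--Zeng) or Precup~\cite{Pre18}: the key point is that $\Hess(N,I)$ has a unique irreducible component of maximal dimension --- namely the closure of the big cell above --- and one shows the lower-dimensional cells all lie in this closure, or alternatively one uses the torus action and the fact that $\Hess(N,I)$ is connected together with a local irreducibility argument at the unique $T$-fixed point of the big cell. Concretely: $\Hess(N_0,I)$ carries an action of a one-dimensional torus (a $\C^*$ inside $T$ scaling $N_0$), its fixed locus is a single point $eB$, and every point flows to $eB$; smoothness or at least the structure of $\Hess(N,I)$ near $eB$ shows the germ there is irreducible of dimension $|I|$, forcing the whole variety (being the attracting set) to be irreducible.

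The main obstacle, if one wanted a self-contained argument rather than a citation, would be irreducibility: the affine paving alone gives the dimension immediately but does not by itself rule out several top-dimensional components, and establishing that the closure of the big cell swallows all the other cells requires either the detailed incidence geometry of the paving (as in Precup's work) or a deformation/degeneration argument relating $\Hess(N,I)$ to a regular \emph{semisimple} Hessenberg variety where irreducibility of the relevant piece is easier. Since the result is quoted from the literature, in the paper I would simply state that the dimension follows from the affine paving with top cell of dimension $|I|$, and irreducibility follows from \cite{AFZ, Pre18}, and refer the reader there for details.
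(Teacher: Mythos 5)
The paper offers no proof of this proposition at all --- it is purely a citation to \cite{AFZ, Pre13, Pre18} --- so your decision to indicate how the quoted results combine is reasonable. However, the sketch you supply for the dimension formula is incorrect in a concrete way, and you should not let it stand in place of the citation.

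The error is in identifying the top-dimensional cell of Precup's paving. You write that the ``identity cell (indexed by $w = e$) is the open cell,'' and that the ``big cell'' $B\cdot B/B \cap \Hess(N_0,I)$ is an affine space of dimension $|I|$. But $BeB/B$ is a single point, and in the paving of Theorem~\ref{theorem:paving_Hessenberg} the cell attached to $w$ has dimension $|\NN(w)\cap I|$, which for $w=e$ is $|\emptyset|=0$. The cell of dimension $|I|$ is instead the one indexed by the unique $w$ with $\NN(w)\cap I = I$ (it exists because $I$ itself is always a Weyl type subset, cf.\ Proposition~\ref{proposition:one-to-oneWeylType}); in the example of the Peterson variety for $\mathfrak{sl}_3$ with $I=\Delta$, this is $w=w_0$, not $e$. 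Your explicit claim that $\Ad(g^{-1})(N_0)\in H_I$ holds for every $g=\exp\bigl(\sum_{\alpha\in I}t_\alpha E_{-\alpha}\bigr)$ is also false: already for $\mathfrak{sl}_3$ and $I=\{\alpha_1,\alpha_2\}$, the component of $\Ad(g^{-1})(N_0)$ in $\g_{-(\alpha_1+\alpha_2)}$ is $\tfrac12 t_1t_2(t_1-t_2)E_{31}$ (up to sign), which is nonzero for generic $t_1,t_2$ even though $\alpha_1+\alpha_2\notin I$. The trap is that $H_I$ is a $\b$-submodule but not a Lie subalgebra, so $[\oplus_{\alpha\in I}\g_{-\alpha},\,H_I]\not\subset H_I$ in general and the exponential does not preserve the Hessenberg condition termwise. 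The opposite big cell $U^-eB/B\cap\Hess(N_0,I)$ is indeed an affine space of dimension $|I|$, but its parametrization is not the naive one you wrote; this is itself a nontrivial fact requiring the machinery of \cite{Pre13} or its predecessors.

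The correct way to extract the dimension from the cited paving is simply: all cells have dimension $|\NN(w)\cap I|\le |I|$, and there is exactly one cell of dimension $|I|$ (namely the one corresponding to the Weyl type subset $Y=I$), so $\dim_\C\Hess(N,I)=|I|$. Irreducibility, as you say, is a genuinely separate statement and must be taken from \cite{AFZ} or \cite{Pre18}; the paving alone does not rule out multiple components, and the top cell argument as you phrased it is not sound for the reasons above.
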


\begin{remark}
The result above for type $A$ was proved by \cite[Lemma~7.1]{AnTy} and \cite[Theorem~10.2]{SoTy}.
\end{remark}

We will see a partial Hessenberg analogue of the properties in Section~\ref{sect:product formula}.

\subsection{Type $A$ description} \label{subsection:Type A description}

We here explain partial Hessenberg varieties for type $A$ in terms of partial Hessenberg functions, which were introduced by Kiem and Lee in \cite{KiLe}. 
Let $\KK \coloneqq \{\kk_1,\ldots,\kk_s \}$ be a subset of $[n-1]$ with $1 \leq \kk_1 < \cdots < \kk_s \leq n-1$. 
The partial flag variety in type $A_{n-1}$ is defined as 
\begin{align*}
\Flag_\KK(\C^n) \coloneqq \{ (V_{\kk_p})_{1 \leq p \leq s} \coloneqq (V_{\kk_1} \subset V_{\kk_2} \subset \cdots \subset V_{\kk_s} \subset \C^n) \mid \dim V_{\kk_p} = \kk_p \ \textrm{for all} \ 1 \leq p \leq s \}.
\end{align*}
Let $G=\SL_n(\C)$ and $B$ the set of the upper triangular matrices of $G$. 
Set $\Phi^+=\{x_i-x_j \mid 1 \leq i < j \leq n\}$ and $\Delta=\{\alpha_i\coloneqq x_i-x_{i+1} \mid i \in [n-1] \}$.
Let $\Theta=\{ \alpha_i \in \Delta \mid i \in [n-1]\setminus \KK \}$ and $P=P_\Theta=\{(a_{ij})_{i,j \in [n]} \in G \mid a_{ij}=0 \ \textrm{if} \ i>\kk_p \ \textrm{and} \ j \leq \kk_p \ \textrm{for all} \ 1 \leq p \leq s \}$.
As is well-known, the partial flag variety $\Flag_\KK(\C^n)$ is naturally identified with $G/P$.

A \emph{partial Hessenberg function} is defined to be a function $h: \KK \cup \{n \} \rightarrow \KK \cup \{n \}$ such that $h(j) \geq j$ for all $j \in \KK \cup \{n \}$ and $h(\kk_1) \leq h(\kk_2) \leq \dots \leq h(\kk_s)$. 
We frequently write a partial Hessenberg function by listing its values in sequence, namely $h=(h(\kk_1), h(\kk_2), \ldots,  h(\kk_s),h(n)=n)$.
When $\KK = [n-1]$, such a function $h:[n] \rightarrow [n]$ is called a \emph{Hessenberg function}. 
Note that partial Hessenberg functions (resp. partial Hessenberg varieties) are called generalized Hessenberg functions (resp. generalized Hessenberg varieties) in \cite{KiLe}.
We can extend a partial Hessenberg function $h: \KK \cup \{n \} \rightarrow \KK \cup \{n \}$ to the Hessenberg function $\tilde{h}: [n] \to [n]$ defined by 
\begin{align} \label{eq:tilde h}
\tilde{h}(j) = h(\kk_p) \ \textrm{if} \ \kk_{p-1} < j \leq \kk_p \ \textrm{for some} \ 1 \leq p \leq s+1 
\end{align}
with the convention that $\kk_0=0$ and $\kk_{s+1}=n$. 
Remark that the extended Hessenberg functions $\tilde{h}$ are introduced in \cite{KiLe}. 
Then, one can easily see the following one-to-one correspondence  
\begin{align*}
\{\textrm{partial Hessenberg functions} \ \KK \cup \{n \} \rightarrow \KK \cup \{n \} \} \xrightarrow{1:1} \{\p\textrm{-Hessenberg spaces in} \ \g \},
\end{align*}
which sends a partial Hessenberg function $h:\KK \cup \{n \} \rightarrow \KK \cup \{n \}$ to the $\p$-Hessenberg space defined by
\begin{align*}
H(h) \coloneqq \{ (a_{ij})_{i,j \in [n]} \in \g \mid a_{ij}=0 \ \textrm{if} \ i > \tilde{h}(j) \ \textrm{for all} \ j \in [n] \}.
\end{align*}
Let $N$ be a regular nilpotent matrix (i.e. a nilpotent matrix whose Jordan form consists of exactly one Jordan block) and $h: \KK \cup \{n \} \rightarrow \KK \cup \{n \}$ a partial Hessenberg function.
Then under the identification $\Flag_\KK(\C^n) \cong G/P$, the following subvariety 
\begin{align} \label{eq:Hess(N,h)KKTypeA}
\Hess_\KK(N,h) \coloneqq \{ (V_{\kk_p})_{1 \leq p \leq s} \in \Flag_\KK(\C^n) \mid NV_{\kk_p} \subset V_{h(\kk_p)} \ \textrm{for all} \ 1 \leq p \leq s \}
\end{align}
is isomorphic to the regular nilpotent partial Hessenberg variety $\Hess_\Theta(N,H(h))$.  
It is also useful to express arbitrary Hessenberg space $H_I$ pictorially by drawing a configuration of boxes on a square grid of size $n \times n$ whose shaded boxes correspond to the roots in $\Phi^+ \cup (-I)$ appearing in \eqref{eq:HI} together with diagonal parts (see Example~\ref{example:typeAHessenberg} below).

\begin{example} \label{example:typeAHessenberg} \label{example:h=(4,5,5,8,10)}
Consider type $A_9$ and $\KK=\{2,4,5,8\}$.
Note that $\Theta=\Delta \setminus \{\alpha_2,\alpha_4,\alpha_5,\alpha_8 \}$. 
For example, $h=(4,5,5,8,10)$ is a partial Hessenberg function and the extended Hessenberg function is given by $\tilde{h}=(4,4,5,5,5,8,8,8,10,10)$. 
In this case, we have
\begin{align*}
\Phi^+_\Theta &= \{x_1-x_2, x_3-x_4, x_6-x_7, x_6-x_8, x_7-x_8, x_9-x_{10} \} \\
I &= \Phi^+_\Theta \cup \{x_1-x_3, x_1-x_4, x_2-x_3, x_2-x_4, x_3-x_5, x_4-x_5 \}
\end{align*}
where $I$ is the $\Theta$-ideal corresponding to $H(h)$ under \eqref{eq:one-to-one partial}, i.e. $H(h)=\b \oplus \bigoplus_{\alpha \in I} \g_{-\alpha}$. 
The pictures for $H(h)$ and $\p_\Theta=\b \oplus \bigoplus_{\beta \in \Phi^+_\Theta} \g_{-\beta}$ are shown in Figure~\ref{pic:h=(4,5,8,8,10)}.
\begin{figure}[h]
\begin{center}
\begin{picture}(250,100)

\put(-40,47){$H(h):$}

\put(0,93){\colorbox{gray}}
\put(0,97){\colorbox{gray}}
\put(4,93){\colorbox{gray}}
\put(4,97){\colorbox{gray}}
\put(0,83){\colorbox{gray}}
\put(0,87){\colorbox{gray}}
\put(4,83){\colorbox{gray}}
\put(4,87){\colorbox{gray}}
\put(0,73){\colorbox{gray}}
\put(0,77){\colorbox{gray}}
\put(4,73){\colorbox{gray}}
\put(4,77){\colorbox{gray}}
\put(0,63){\colorbox{gray}}
\put(0,67){\colorbox{gray}}
\put(4,63){\colorbox{gray}}
\put(4,67){\colorbox{gray}}

\put(10,93){\colorbox{gray}}
\put(10,97){\colorbox{gray}}
\put(14,93){\colorbox{gray}}
\put(14,97){\colorbox{gray}}
\put(10,83){\colorbox{gray}}
\put(10,87){\colorbox{gray}}
\put(14,83){\colorbox{gray}}
\put(14,87){\colorbox{gray}}
\put(10,73){\colorbox{gray}}
\put(10,77){\colorbox{gray}}
\put(14,73){\colorbox{gray}}
\put(14,77){\colorbox{gray}}
\put(10,63){\colorbox{gray}}
\put(10,67){\colorbox{gray}}
\put(14,63){\colorbox{gray}}
\put(14,67){\colorbox{gray}}

\put(20,93){\colorbox{gray}}
\put(20,97){\colorbox{gray}}
\put(24,93){\colorbox{gray}}
\put(24,97){\colorbox{gray}}
\put(20,83){\colorbox{gray}}
\put(20,87){\colorbox{gray}}
\put(24,83){\colorbox{gray}}
\put(24,87){\colorbox{gray}}
\put(20,73){\colorbox{gray}}
\put(20,77){\colorbox{gray}}
\put(24,73){\colorbox{gray}}
\put(24,77){\colorbox{gray}}
\put(20,63){\colorbox{gray}}
\put(20,67){\colorbox{gray}}
\put(24,63){\colorbox{gray}}
\put(24,67){\colorbox{gray}}
\put(20,53){\colorbox{gray}}
\put(20,57){\colorbox{gray}}
\put(24,53){\colorbox{gray}}
\put(24,57){\colorbox{gray}}

\put(30,93){\colorbox{gray}}
\put(30,97){\colorbox{gray}}
\put(34,93){\colorbox{gray}}
\put(34,97){\colorbox{gray}}
\put(30,83){\colorbox{gray}}
\put(30,87){\colorbox{gray}}
\put(34,83){\colorbox{gray}}
\put(34,87){\colorbox{gray}}
\put(30,73){\colorbox{gray}}
\put(30,77){\colorbox{gray}}
\put(34,73){\colorbox{gray}}
\put(34,77){\colorbox{gray}}
\put(30,63){\colorbox{gray}}
\put(30,67){\colorbox{gray}}
\put(34,63){\colorbox{gray}}
\put(34,67){\colorbox{gray}}
\put(30,53){\colorbox{gray}}
\put(30,57){\colorbox{gray}}
\put(34,53){\colorbox{gray}}
\put(34,57){\colorbox{gray}}

\put(40,93){\colorbox{gray}}
\put(40,97){\colorbox{gray}}
\put(44,93){\colorbox{gray}}
\put(44,97){\colorbox{gray}}
\put(40,83){\colorbox{gray}}
\put(40,87){\colorbox{gray}}
\put(44,83){\colorbox{gray}}
\put(44,87){\colorbox{gray}}
\put(40,73){\colorbox{gray}}
\put(40,77){\colorbox{gray}}
\put(44,73){\colorbox{gray}}
\put(44,77){\colorbox{gray}}
\put(40,63){\colorbox{gray}}
\put(40,67){\colorbox{gray}}
\put(44,63){\colorbox{gray}}
\put(44,67){\colorbox{gray}}
\put(40,53){\colorbox{gray}}
\put(40,57){\colorbox{gray}}
\put(44,53){\colorbox{gray}}
\put(44,57){\colorbox{gray}}

\put(50,93){\colorbox{gray}}
\put(50,97){\colorbox{gray}}
\put(54,93){\colorbox{gray}}
\put(54,97){\colorbox{gray}}
\put(50,83){\colorbox{gray}}
\put(50,87){\colorbox{gray}}
\put(54,83){\colorbox{gray}}
\put(54,87){\colorbox{gray}}
\put(50,73){\colorbox{gray}}
\put(50,77){\colorbox{gray}}
\put(54,73){\colorbox{gray}}
\put(54,77){\colorbox{gray}}
\put(50,63){\colorbox{gray}}
\put(50,67){\colorbox{gray}}
\put(54,63){\colorbox{gray}}
\put(54,67){\colorbox{gray}}
\put(50,53){\colorbox{gray}}
\put(50,57){\colorbox{gray}}
\put(54,53){\colorbox{gray}}
\put(54,57){\colorbox{gray}}
\put(50,43){\colorbox{gray}}
\put(50,47){\colorbox{gray}}
\put(54,43){\colorbox{gray}}
\put(54,47){\colorbox{gray}}
\put(50,33){\colorbox{gray}}
\put(50,37){\colorbox{gray}}
\put(54,33){\colorbox{gray}}
\put(54,37){\colorbox{gray}}
\put(50,23){\colorbox{gray}}
\put(50,27){\colorbox{gray}}
\put(54,23){\colorbox{gray}}
\put(54,27){\colorbox{gray}}

\put(60,93){\colorbox{gray}}
\put(60,97){\colorbox{gray}}
\put(64,93){\colorbox{gray}}
\put(64,97){\colorbox{gray}}
\put(60,83){\colorbox{gray}}
\put(60,87){\colorbox{gray}}
\put(64,83){\colorbox{gray}}
\put(64,87){\colorbox{gray}}
\put(60,73){\colorbox{gray}}
\put(60,77){\colorbox{gray}}
\put(64,73){\colorbox{gray}}
\put(64,77){\colorbox{gray}}
\put(60,63){\colorbox{gray}}
\put(60,67){\colorbox{gray}}
\put(64,63){\colorbox{gray}}
\put(64,67){\colorbox{gray}}
\put(60,53){\colorbox{gray}}
\put(60,57){\colorbox{gray}}
\put(64,53){\colorbox{gray}}
\put(64,57){\colorbox{gray}}
\put(60,43){\colorbox{gray}}
\put(60,47){\colorbox{gray}}
\put(64,43){\colorbox{gray}}
\put(64,47){\colorbox{gray}}
\put(60,33){\colorbox{gray}}
\put(60,37){\colorbox{gray}}
\put(64,33){\colorbox{gray}}
\put(64,37){\colorbox{gray}}
\put(60,23){\colorbox{gray}}
\put(60,27){\colorbox{gray}}
\put(64,23){\colorbox{gray}}
\put(64,27){\colorbox{gray}}

\put(70,93){\colorbox{gray}}
\put(70,97){\colorbox{gray}}
\put(74,93){\colorbox{gray}}
\put(74,97){\colorbox{gray}}
\put(70,83){\colorbox{gray}}
\put(70,87){\colorbox{gray}}
\put(74,83){\colorbox{gray}}
\put(74,87){\colorbox{gray}}
\put(70,73){\colorbox{gray}}
\put(70,77){\colorbox{gray}}
\put(74,73){\colorbox{gray}}
\put(74,77){\colorbox{gray}}
\put(70,63){\colorbox{gray}}
\put(70,67){\colorbox{gray}}
\put(74,63){\colorbox{gray}}
\put(74,67){\colorbox{gray}}
\put(70,53){\colorbox{gray}}
\put(70,57){\colorbox{gray}}
\put(74,53){\colorbox{gray}}
\put(74,57){\colorbox{gray}}
\put(70,43){\colorbox{gray}}
\put(70,47){\colorbox{gray}}
\put(74,43){\colorbox{gray}}
\put(74,47){\colorbox{gray}}
\put(70,33){\colorbox{gray}}
\put(70,37){\colorbox{gray}}
\put(74,33){\colorbox{gray}}
\put(74,37){\colorbox{gray}}
\put(70,23){\colorbox{gray}}
\put(70,27){\colorbox{gray}}
\put(74,23){\colorbox{gray}}
\put(74,27){\colorbox{gray}}

\put(80,93){\colorbox{gray}}
\put(80,97){\colorbox{gray}}
\put(84,93){\colorbox{gray}}
\put(84,97){\colorbox{gray}}
\put(80,83){\colorbox{gray}}
\put(80,87){\colorbox{gray}}
\put(84,83){\colorbox{gray}}
\put(84,87){\colorbox{gray}}
\put(80,73){\colorbox{gray}}
\put(80,77){\colorbox{gray}}
\put(84,73){\colorbox{gray}}
\put(84,77){\colorbox{gray}}
\put(80,63){\colorbox{gray}}
\put(80,67){\colorbox{gray}}
\put(84,63){\colorbox{gray}}
\put(84,67){\colorbox{gray}}
\put(80,53){\colorbox{gray}}
\put(80,57){\colorbox{gray}}
\put(84,53){\colorbox{gray}}
\put(84,57){\colorbox{gray}}
\put(80,43){\colorbox{gray}}
\put(80,47){\colorbox{gray}}
\put(84,43){\colorbox{gray}}
\put(84,47){\colorbox{gray}}
\put(80,33){\colorbox{gray}}
\put(80,37){\colorbox{gray}}
\put(84,33){\colorbox{gray}}
\put(84,37){\colorbox{gray}}
\put(80,23){\colorbox{gray}}
\put(80,27){\colorbox{gray}}
\put(84,23){\colorbox{gray}}
\put(84,27){\colorbox{gray}}
\put(80,13){\colorbox{gray}}
\put(80,17){\colorbox{gray}}
\put(84,13){\colorbox{gray}}
\put(84,17){\colorbox{gray}}
\put(80,3){\colorbox{gray}}
\put(80,7){\colorbox{gray}}
\put(84,3){\colorbox{gray}}
\put(84,7){\colorbox{gray}}

\put(90,93){\colorbox{gray}}
\put(90,97){\colorbox{gray}}
\put(94,93){\colorbox{gray}}
\put(94,97){\colorbox{gray}}
\put(90,83){\colorbox{gray}}
\put(90,87){\colorbox{gray}}
\put(94,83){\colorbox{gray}}
\put(94,87){\colorbox{gray}}
\put(90,73){\colorbox{gray}}
\put(90,77){\colorbox{gray}}
\put(94,73){\colorbox{gray}}
\put(94,77){\colorbox{gray}}
\put(90,63){\colorbox{gray}}
\put(90,67){\colorbox{gray}}
\put(94,63){\colorbox{gray}}
\put(94,67){\colorbox{gray}}
\put(90,53){\colorbox{gray}}
\put(90,57){\colorbox{gray}}
\put(94,53){\colorbox{gray}}
\put(94,57){\colorbox{gray}}
\put(90,43){\colorbox{gray}}
\put(90,47){\colorbox{gray}}
\put(94,43){\colorbox{gray}}
\put(94,47){\colorbox{gray}}
\put(90,33){\colorbox{gray}}
\put(90,37){\colorbox{gray}}
\put(94,33){\colorbox{gray}}
\put(94,37){\colorbox{gray}}
\put(90,23){\colorbox{gray}}
\put(90,27){\colorbox{gray}}
\put(94,23){\colorbox{gray}}
\put(94,27){\colorbox{gray}}
\put(90,13){\colorbox{gray}}
\put(90,17){\colorbox{gray}}
\put(94,13){\colorbox{gray}}
\put(94,17){\colorbox{gray}}
\put(90,3){\colorbox{gray}}
\put(90,7){\colorbox{gray}}
\put(94,3){\colorbox{gray}}
\put(94,7){\colorbox{gray}}

\put(0,0){\framebox(10,10)}
\put(10,0){\framebox(10,10)}
\put(20,0){\framebox(10,10)}
\put(30,0){\framebox(10,10)}
\put(40,0){\framebox(10,10)}
\put(50,0){\framebox(10,10)}
\put(60,0){\framebox(10,10)}
\put(70,0){\framebox(10,10)}
\put(80,0){\framebox(10,10)}
\put(90,0){\framebox(10,10)}
\put(0,10){\framebox(10,10)}
\put(10,10){\framebox(10,10)}
\put(20,10){\framebox(10,10)}
\put(30,10){\framebox(10,10)}
\put(40,10){\framebox(10,10)}
\put(50,10){\framebox(10,10)}
\put(60,10){\framebox(10,10)}
\put(70,10){\framebox(10,10)}
\put(80,10){\framebox(10,10)}
\put(90,10){\framebox(10,10)}
\put(0,20){\framebox(10,10)}
\put(10,20){\framebox(10,10)}
\put(20,20){\framebox(10,10)}
\put(30,20){\framebox(10,10)}
\put(40,20){\framebox(10,10)}
\put(50,20){\framebox(10,10)}
\put(60,20){\framebox(10,10)}
\put(70,20){\framebox(10,10)}
\put(80,20){\framebox(10,10)}
\put(90,20){\framebox(10,10)}
\put(0,30){\framebox(10,10)}
\put(10,30){\framebox(10,10)}
\put(20,30){\framebox(10,10)}
\put(30,30){\framebox(10,10)}
\put(40,30){\framebox(10,10)}
\put(50,30){\framebox(10,10)}
\put(60,30){\framebox(10,10)}
\put(70,30){\framebox(10,10)}
\put(80,30){\framebox(10,10)}
\put(90,30){\framebox(10,10)}
\put(0,40){\framebox(10,10)}
\put(10,40){\framebox(10,10)}
\put(20,40){\framebox(10,10)}
\put(30,40){\framebox(10,10)}
\put(40,40){\framebox(10,10)}
\put(50,40){\framebox(10,10)}
\put(60,40){\framebox(10,10)}
\put(70,40){\framebox(10,10)}
\put(80,40){\framebox(10,10)}
\put(90,40){\framebox(10,10)}
\put(0,50){\framebox(10,10)}
\put(10,50){\framebox(10,10)}
\put(20,50){\framebox(10,10)}
\put(30,50){\framebox(10,10)}
\put(40,50){\framebox(10,10)}
\put(50,50){\framebox(10,10)}
\put(60,50){\framebox(10,10)}
\put(70,50){\framebox(10,10)}
\put(80,50){\framebox(10,10)}
\put(90,50){\framebox(10,10)}
\put(0,60){\framebox(10,10)}
\put(10,60){\framebox(10,10)}
\put(20,60){\framebox(10,10)}
\put(30,60){\framebox(10,10)}
\put(40,60){\framebox(10,10)}
\put(50,60){\framebox(10,10)}
\put(60,60){\framebox(10,10)}
\put(70,60){\framebox(10,10)}
\put(80,60){\framebox(10,10)}
\put(90,60){\framebox(10,10)}
\put(0,70){\framebox(10,10)}
\put(10,70){\framebox(10,10)}
\put(20,70){\framebox(10,10)}
\put(30,70){\framebox(10,10)}
\put(40,70){\framebox(10,10)}
\put(50,70){\framebox(10,10)}
\put(60,70){\framebox(10,10)}
\put(70,70){\framebox(10,10)}
\put(80,70){\framebox(10,10)}
\put(90,70){\framebox(10,10)}
\put(0,80){\framebox(10,10)}
\put(10,80){\framebox(10,10)}
\put(20,80){\framebox(10,10)}
\put(30,80){\framebox(10,10)}
\put(40,80){\framebox(10,10)}
\put(50,80){\framebox(10,10)}
\put(60,80){\framebox(10,10)}
\put(70,80){\framebox(10,10)}
\put(80,80){\framebox(10,10)}
\put(90,80){\framebox(10,10)}
\put(0,90){\framebox(10,10)}
\put(10,90){\framebox(10,10)}
\put(20,90){\framebox(10,10)}
\put(30,90){\framebox(10,10)}
\put(40,90){\framebox(10,10)}
\put(50,90){\framebox(10,10)}
\put(60,90){\framebox(10,10)}
\put(70,90){\framebox(10,10)}
\put(80,90){\framebox(10,10)}
\put(90,90){\framebox(10,10)}

\hspace{-30pt}

\put(175,47){$\p_\Theta:$}

\put(200,93){\colorbox{gray}}
\put(200,97){\colorbox{gray}}
\put(204,93){\colorbox{gray}}
\put(204,97){\colorbox{gray}}
\put(200,83){\colorbox{gray}}
\put(200,87){\colorbox{gray}}
\put(204,83){\colorbox{gray}}
\put(204,87){\colorbox{gray}}

\put(210,93){\colorbox{gray}}
\put(210,97){\colorbox{gray}}
\put(214,93){\colorbox{gray}}
\put(214,97){\colorbox{gray}}
\put(210,83){\colorbox{gray}}
\put(210,87){\colorbox{gray}}
\put(214,83){\colorbox{gray}}
\put(214,87){\colorbox{gray}}

\put(220,93){\colorbox{gray}}
\put(220,97){\colorbox{gray}}
\put(224,93){\colorbox{gray}}
\put(224,97){\colorbox{gray}}
\put(220,83){\colorbox{gray}}
\put(220,87){\colorbox{gray}}
\put(224,83){\colorbox{gray}}
\put(224,87){\colorbox{gray}}
\put(220,73){\colorbox{gray}}
\put(220,77){\colorbox{gray}}
\put(224,73){\colorbox{gray}}
\put(224,77){\colorbox{gray}}
\put(220,63){\colorbox{gray}}
\put(220,67){\colorbox{gray}}
\put(224,63){\colorbox{gray}}
\put(224,67){\colorbox{gray}}

\put(230,93){\colorbox{gray}}
\put(230,97){\colorbox{gray}}
\put(234,93){\colorbox{gray}}
\put(234,97){\colorbox{gray}}
\put(230,83){\colorbox{gray}}
\put(230,87){\colorbox{gray}}
\put(234,83){\colorbox{gray}}
\put(234,87){\colorbox{gray}}
\put(230,73){\colorbox{gray}}
\put(230,77){\colorbox{gray}}
\put(234,73){\colorbox{gray}}
\put(234,77){\colorbox{gray}}
\put(230,63){\colorbox{gray}}
\put(230,67){\colorbox{gray}}
\put(234,63){\colorbox{gray}}
\put(234,67){\colorbox{gray}}

\put(240,93){\colorbox{gray}}
\put(240,97){\colorbox{gray}}
\put(244,93){\colorbox{gray}}
\put(244,97){\colorbox{gray}}
\put(240,83){\colorbox{gray}}
\put(240,87){\colorbox{gray}}
\put(244,83){\colorbox{gray}}
\put(244,87){\colorbox{gray}}
\put(240,73){\colorbox{gray}}
\put(240,77){\colorbox{gray}}
\put(244,73){\colorbox{gray}}
\put(244,77){\colorbox{gray}}
\put(240,63){\colorbox{gray}}
\put(240,67){\colorbox{gray}}
\put(244,63){\colorbox{gray}}
\put(244,67){\colorbox{gray}}
\put(240,53){\colorbox{gray}}
\put(240,57){\colorbox{gray}}
\put(244,53){\colorbox{gray}}
\put(244,57){\colorbox{gray}}

\put(250,93){\colorbox{gray}}
\put(250,97){\colorbox{gray}}
\put(254,93){\colorbox{gray}}
\put(254,97){\colorbox{gray}}
\put(250,83){\colorbox{gray}}
\put(250,87){\colorbox{gray}}
\put(254,83){\colorbox{gray}}
\put(254,87){\colorbox{gray}}
\put(250,73){\colorbox{gray}}
\put(250,77){\colorbox{gray}}
\put(254,73){\colorbox{gray}}
\put(254,77){\colorbox{gray}}
\put(250,63){\colorbox{gray}}
\put(250,67){\colorbox{gray}}
\put(254,63){\colorbox{gray}}
\put(254,67){\colorbox{gray}}
\put(250,53){\colorbox{gray}}
\put(250,57){\colorbox{gray}}
\put(254,53){\colorbox{gray}}
\put(254,57){\colorbox{gray}}
\put(250,43){\colorbox{gray}}
\put(250,47){\colorbox{gray}}
\put(254,43){\colorbox{gray}}
\put(254,47){\colorbox{gray}}
\put(250,33){\colorbox{gray}}
\put(250,37){\colorbox{gray}}
\put(254,33){\colorbox{gray}}
\put(254,37){\colorbox{gray}}
\put(250,23){\colorbox{gray}}
\put(250,27){\colorbox{gray}}
\put(254,23){\colorbox{gray}}
\put(254,27){\colorbox{gray}}

\put(260,93){\colorbox{gray}}
\put(260,97){\colorbox{gray}}
\put(264,93){\colorbox{gray}}
\put(264,97){\colorbox{gray}}
\put(260,83){\colorbox{gray}}
\put(260,87){\colorbox{gray}}
\put(264,83){\colorbox{gray}}
\put(264,87){\colorbox{gray}}
\put(260,73){\colorbox{gray}}
\put(260,77){\colorbox{gray}}
\put(264,73){\colorbox{gray}}
\put(264,77){\colorbox{gray}}
\put(260,63){\colorbox{gray}}
\put(260,67){\colorbox{gray}}
\put(264,63){\colorbox{gray}}
\put(264,67){\colorbox{gray}}
\put(260,53){\colorbox{gray}}
\put(260,57){\colorbox{gray}}
\put(264,53){\colorbox{gray}}
\put(264,57){\colorbox{gray}}
\put(260,43){\colorbox{gray}}
\put(260,47){\colorbox{gray}}
\put(264,43){\colorbox{gray}}
\put(264,47){\colorbox{gray}}
\put(260,33){\colorbox{gray}}
\put(260,37){\colorbox{gray}}
\put(264,33){\colorbox{gray}}
\put(264,37){\colorbox{gray}}
\put(260,23){\colorbox{gray}}
\put(260,27){\colorbox{gray}}
\put(264,23){\colorbox{gray}}
\put(264,27){\colorbox{gray}}

\put(270,93){\colorbox{gray}}
\put(270,97){\colorbox{gray}}
\put(274,93){\colorbox{gray}}
\put(274,97){\colorbox{gray}}
\put(270,83){\colorbox{gray}}
\put(270,87){\colorbox{gray}}
\put(274,83){\colorbox{gray}}
\put(274,87){\colorbox{gray}}
\put(270,73){\colorbox{gray}}
\put(270,77){\colorbox{gray}}
\put(274,73){\colorbox{gray}}
\put(274,77){\colorbox{gray}}
\put(270,63){\colorbox{gray}}
\put(270,67){\colorbox{gray}}
\put(274,63){\colorbox{gray}}
\put(274,67){\colorbox{gray}}
\put(270,53){\colorbox{gray}}
\put(270,57){\colorbox{gray}}
\put(274,53){\colorbox{gray}}
\put(274,57){\colorbox{gray}}
\put(270,43){\colorbox{gray}}
\put(270,47){\colorbox{gray}}
\put(274,43){\colorbox{gray}}
\put(274,47){\colorbox{gray}}
\put(270,33){\colorbox{gray}}
\put(270,37){\colorbox{gray}}
\put(274,33){\colorbox{gray}}
\put(274,37){\colorbox{gray}}
\put(270,23){\colorbox{gray}}
\put(270,27){\colorbox{gray}}
\put(274,23){\colorbox{gray}}
\put(274,27){\colorbox{gray}}

\put(280,93){\colorbox{gray}}
\put(280,97){\colorbox{gray}}
\put(284,93){\colorbox{gray}}
\put(284,97){\colorbox{gray}}
\put(280,83){\colorbox{gray}}
\put(280,87){\colorbox{gray}}
\put(284,83){\colorbox{gray}}
\put(284,87){\colorbox{gray}}
\put(280,73){\colorbox{gray}}
\put(280,77){\colorbox{gray}}
\put(284,73){\colorbox{gray}}
\put(284,77){\colorbox{gray}}
\put(280,63){\colorbox{gray}}
\put(280,67){\colorbox{gray}}
\put(284,63){\colorbox{gray}}
\put(284,67){\colorbox{gray}}
\put(280,53){\colorbox{gray}}
\put(280,57){\colorbox{gray}}
\put(284,53){\colorbox{gray}}
\put(284,57){\colorbox{gray}}
\put(280,43){\colorbox{gray}}
\put(280,47){\colorbox{gray}}
\put(284,43){\colorbox{gray}}
\put(284,47){\colorbox{gray}}
\put(280,33){\colorbox{gray}}
\put(280,37){\colorbox{gray}}
\put(284,33){\colorbox{gray}}
\put(284,37){\colorbox{gray}}
\put(280,23){\colorbox{gray}}
\put(280,27){\colorbox{gray}}
\put(284,23){\colorbox{gray}}
\put(284,27){\colorbox{gray}}
\put(280,13){\colorbox{gray}}
\put(280,17){\colorbox{gray}}
\put(284,13){\colorbox{gray}}
\put(284,17){\colorbox{gray}}
\put(280,3){\colorbox{gray}}
\put(280,7){\colorbox{gray}}
\put(284,3){\colorbox{gray}}
\put(284,7){\colorbox{gray}}

\put(290,93){\colorbox{gray}}
\put(290,97){\colorbox{gray}}
\put(294,93){\colorbox{gray}}
\put(294,97){\colorbox{gray}}
\put(290,83){\colorbox{gray}}
\put(290,87){\colorbox{gray}}
\put(294,83){\colorbox{gray}}
\put(294,87){\colorbox{gray}}
\put(290,73){\colorbox{gray}}
\put(290,77){\colorbox{gray}}
\put(294,73){\colorbox{gray}}
\put(294,77){\colorbox{gray}}
\put(290,63){\colorbox{gray}}
\put(290,67){\colorbox{gray}}
\put(294,63){\colorbox{gray}}
\put(294,67){\colorbox{gray}}
\put(290,53){\colorbox{gray}}
\put(290,57){\colorbox{gray}}
\put(294,53){\colorbox{gray}}
\put(294,57){\colorbox{gray}}
\put(290,43){\colorbox{gray}}
\put(290,47){\colorbox{gray}}
\put(294,43){\colorbox{gray}}
\put(294,47){\colorbox{gray}}
\put(290,33){\colorbox{gray}}
\put(290,37){\colorbox{gray}}
\put(294,33){\colorbox{gray}}
\put(294,37){\colorbox{gray}}
\put(290,23){\colorbox{gray}}
\put(290,27){\colorbox{gray}}
\put(294,23){\colorbox{gray}}
\put(294,27){\colorbox{gray}}
\put(290,13){\colorbox{gray}}
\put(290,17){\colorbox{gray}}
\put(294,13){\colorbox{gray}}
\put(294,17){\colorbox{gray}}
\put(290,3){\colorbox{gray}}
\put(290,7){\colorbox{gray}}
\put(294,3){\colorbox{gray}}
\put(294,7){\colorbox{gray}}

\put(200,0){\framebox(10,10)}
\put(210,0){\framebox(10,10)}
\put(220,0){\framebox(10,10)}
\put(230,0){\framebox(10,10)}
\put(240,0){\framebox(10,10)}
\put(250,0){\framebox(10,10)}
\put(260,0){\framebox(10,10)}
\put(270,0){\framebox(10,10)}
\put(280,0){\framebox(10,10)}
\put(290,0){\framebox(10,10)}
\put(200,10){\framebox(10,10)}
\put(210,10){\framebox(10,10)}
\put(220,10){\framebox(10,10)}
\put(230,10){\framebox(10,10)}
\put(240,10){\framebox(10,10)}
\put(250,10){\framebox(10,10)}
\put(260,10){\framebox(10,10)}
\put(270,10){\framebox(10,10)}
\put(280,10){\framebox(10,10)}
\put(290,10){\framebox(10,10)}
\put(200,20){\framebox(10,10)}
\put(210,20){\framebox(10,10)}
\put(220,20){\framebox(10,10)}
\put(230,20){\framebox(10,10)}
\put(240,20){\framebox(10,10)}
\put(250,20){\framebox(10,10)}
\put(260,20){\framebox(10,10)}
\put(270,20){\framebox(10,10)}
\put(280,20){\framebox(10,10)}
\put(290,20){\framebox(10,10)}
\put(200,30){\framebox(10,10)}
\put(210,30){\framebox(10,10)}
\put(220,30){\framebox(10,10)}
\put(230,30){\framebox(10,10)}
\put(240,30){\framebox(10,10)}
\put(250,30){\framebox(10,10)}
\put(260,30){\framebox(10,10)}
\put(270,30){\framebox(10,10)}
\put(280,30){\framebox(10,10)}
\put(290,30){\framebox(10,10)}
\put(200,40){\framebox(10,10)}
\put(210,40){\framebox(10,10)}
\put(220,40){\framebox(10,10)}
\put(230,40){\framebox(10,10)}
\put(240,40){\framebox(10,10)}
\put(250,40){\framebox(10,10)}
\put(260,40){\framebox(10,10)}
\put(270,40){\framebox(10,10)}
\put(280,40){\framebox(10,10)}
\put(290,40){\framebox(10,10)}
\put(200,50){\framebox(10,10)}
\put(210,50){\framebox(10,10)}
\put(220,50){\framebox(10,10)}
\put(230,50){\framebox(10,10)}
\put(240,50){\framebox(10,10)}
\put(250,50){\framebox(10,10)}
\put(260,50){\framebox(10,10)}
\put(270,50){\framebox(10,10)}
\put(280,50){\framebox(10,10)}
\put(290,50){\framebox(10,10)}
\put(200,60){\framebox(10,10)}
\put(210,60){\framebox(10,10)}
\put(220,60){\framebox(10,10)}
\put(230,60){\framebox(10,10)}
\put(240,60){\framebox(10,10)}
\put(250,60){\framebox(10,10)}
\put(260,60){\framebox(10,10)}
\put(270,60){\framebox(10,10)}
\put(280,60){\framebox(10,10)}
\put(290,60){\framebox(10,10)}
\put(200,70){\framebox(10,10)}
\put(210,70){\framebox(10,10)}
\put(220,70){\framebox(10,10)}
\put(230,70){\framebox(10,10)}
\put(240,70){\framebox(10,10)}
\put(250,70){\framebox(10,10)}
\put(260,70){\framebox(10,10)}
\put(270,70){\framebox(10,10)}
\put(280,70){\framebox(10,10)}
\put(290,70){\framebox(10,10)}
\put(200,80){\framebox(10,10)}
\put(210,80){\framebox(10,10)}
\put(220,80){\framebox(10,10)}
\put(230,80){\framebox(10,10)}
\put(240,80){\framebox(10,10)}
\put(250,80){\framebox(10,10)}
\put(260,80){\framebox(10,10)}
\put(270,80){\framebox(10,10)}
\put(280,80){\framebox(10,10)}
\put(290,80){\framebox(10,10)}
\put(200,90){\framebox(10,10)}
\put(210,90){\framebox(10,10)}
\put(220,90){\framebox(10,10)}
\put(230,90){\framebox(10,10)}
\put(240,90){\framebox(10,10)}
\put(250,90){\framebox(10,10)}
\put(260,90){\framebox(10,10)}
\put(270,90){\framebox(10,10)}
\put(280,90){\framebox(10,10)}
\put(290,90){\framebox(10,10)}
\end{picture}
\end{center}
\caption{The pictures of $H(h)$ and $\p_\Theta$ for $h=(4,5,5,8,10)$ and $\Theta=\Delta \setminus \{\alpha_2,\alpha_4,\alpha_5,\alpha_8 \}$.}
\label{pic:h=(4,5,8,8,10)}
\end{figure}
\end{example}

\bigskip

\section{A summation formula for the Poincar\'e polynomial of $\Hess_\Theta(N,I)$} \label{sect:summation formula}

A summation formula for the Poincar\'e polynomials of regular nilpotent Hessenberg varieties $\Hess(N,I)$ is given by \cite{Pre13}. 
In this section we provide a summation formula for the Poincar\'e polynomials of regular nilpotent partial Hessenberg varieties $\Hess_\Theta(N,I)$.
 
\subsection{Affine paving}

We first review the result of an affine paving for the partial flag variety $G/P$. 
We refer the reader to \cite{BGG} for the discussion below. 

The Bruhat decomposition for $G$ gives the decomposition $G=\bigsqcup_{w \in W} BwB$. 
This yields the following decomposition of the flag variety $G/B$: 
\begin{align*} 
G/B = \bigsqcup_{w \in W} BwB/B. 
\end{align*}
Each $B$-orbit $BwB/B$ in $G/B$ is called the \emph{Schubert cell}.
We denote by $\ell(w)$ the length of $w$, namely the smallest integer $p$ of simple reflections needed to write $w=s_{i_1}s_{i_2} \dots s_{i_p}$. 
It is well-known that the Schubert cell $BwB/B$ is isomorphic to an affine space $\C^{\ell(w)}$. 
In fact, we write $U_w \coloneqq U \cap wU^{-}w^{-1}$ where $U^{-}=w_0Uw_0^{-1}$ and $w_0$ is the longest element in the Weyl group $W$. 
Then for $w \in W$, there is an isomorphism 
\begin{align} \label{eq:SchubertGB}
U_w \cong BwB/B,
\end{align}
which sends $u$ to $uwB$. 
It is also known that $U_w \cong \C^{\ell(w)}$ for $w \in W$ (e.g. \cite[Remarks in Section~14.12]{Bor91}). 

In more generality, the discussion above is generalized to the partial flag variety $G/P$ as follows.
Let $W^\Theta$ be the minimal left coset representatives, i.e. $W^\Theta \coloneqq \{w \in W \mid \ell(w) < \ell(ws_i) \ \textrm{for all} \ i \ \textrm{with} \ \alpha_i \in \Theta \}$. 
Then it is known that the partial flag variety $G/P$ has the following decomposition:
\begin{align*}
G/P = \bigsqcup_{w \in W^\Theta} BwP/P,
\end{align*}
where we have an isomorphism  
\begin{align} \label{eq:SchubertGP}
U_w \cong BwP/P \ \textrm{for} \ w \in W^\Theta,
\end{align}
which sends $u$ to $uwP$. 
In particular, $BwP/P$ is isomorphic to $\C^{\ell(w)}$ for $w \in W^\Theta$.
The decomposition above with the following fact gives a formula for the Poincar\'e polynomial for $G/P$. 

\begin{proposition} $($\cite[Section~B.3]{Ful97}, \cite[Examples~1.9.1 and 19.1.11]{Ful98}$)$ \label{proposition:paving}
If a complete algebraic variety $Z$ has a filtration $Z=Z_m \supset Z_{m-1} \supset \dots \supset Z_0 \supset Z_{-1}=\emptyset$ by closed subsets, with each $Z_i \setminus Z_{i-1}$ a disjoint union of $U_{i,j}$ isomorphic to an affine space $\C^{n(i,j)}$, then the homology classes $[\overline{U_{i,j}}]$ of the closures of $U_{i,j}$ forms a basis of the integral homology of $Z$. 
In particular, the integral cohomology of $Z$ is torsion-free and it is concentrated in even degree.  
\end{proposition}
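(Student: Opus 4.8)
The plan is to argue by induction on the length $m$ of the filtration, using Borel--Moore homology $H^{BM}_*$ and the long exact sequence of a pair. The two inputs I will rely on are: (i) for an affine space, $H^{BM}_k(\C^n) \cong \Z$ when $k = 2n$ and $H^{BM}_k(\C^n) = 0$ otherwise, with the nonzero group generated by the fundamental class $[\C^n]$; and (ii) since $Z$ is complete, hence compact, $H^{BM}_*(Z) = H_*(Z)$ is ordinary singular homology, and more generally $H^{BM}_*$ agrees with ordinary homology on every closed subset $Z_i$. I will also use that $H^{BM}_*$ is covariant for closed inclusions and that for the open subset $Z_i \setminus Z_{i-1} \subset Z_i$ there is an exact sequence
\[
\cdots \to H^{BM}_k(Z_{i-1}) \to H^{BM}_k(Z_i) \to H^{BM}_k(Z_i \setminus Z_{i-1}) \to H^{BM}_{k-1}(Z_{i-1}) \to \cdots .
\]

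First I would record that, because $Z_i \setminus Z_{i-1} = \bigsqcup_j U_{i,j}$ is a finite disjoint union with $U_{i,j} \cong \C^{n(i,j)}$, one has $H^{BM}_k(Z_i \setminus Z_{i-1}) = \bigoplus_j H^{BM}_k(\C^{n(i,j)})$, a free abelian group generated by the classes $[U_{i,j}]$ in degree $2n(i,j)$ and vanishing in odd degrees. Now I run the induction on $i$. The base case $Z_{-1} = \emptyset$ is trivial, and $Z_0$ is itself a disjoint union of affine spaces, so $H^{BM}_*(Z_0)$ is free, concentrated in even degrees, with basis the $[\overline{U_{0,j}}]$. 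For the inductive step assume $H^{BM}_*(Z_{i-1})$ is free, even, with basis $\{[\overline{U_{i',j}}] : i' < i\}$. In the displayed sequence the connecting map $H^{BM}_k(Z_i\setminus Z_{i-1}) \to H^{BM}_{k-1}(Z_{i-1})$ has free source living in even degrees and target $0$ in odd degrees, so it vanishes identically; hence for all $k$ we obtain a short exact sequence $0 \to H^{BM}_k(Z_{i-1}) \to H^{BM}_k(Z_i) \to H^{BM}_k(Z_i \setminus Z_{i-1}) \to 0$, which splits because the right-hand term is free. Thus $H^{BM}_*(Z_i)$ is free abelian and concentrated in even degrees.

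It remains to identify the basis. Each $U_{i,j}$ is open and closed in $Z_i \setminus Z_{i-1}$, so its closure $\overline{U_{i,j}}$ in $Z_i$ satisfies $\overline{U_{i,j}} \cap (Z_i \setminus Z_{i-1}) = U_{i,j}$; therefore $\overline{U_{i,j}}$ is an irreducible closed subvariety of $Z_i$ of complex dimension $n(i,j)$, it carries a fundamental class in $H^{BM}_{2n(i,j)}(\overline{U_{i,j}})$, and its pushforward $[\overline{U_{i,j}}] \in H^{BM}_{2n(i,j)}(Z_i)$ restricts, under $H^{BM}_*(Z_i) \to H^{BM}_*(Z_i \setminus Z_{i-1})$, to the generator $[U_{i,j}]$. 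Since the $[U_{i,j}]$ form a basis of $H^{BM}_*(Z_i \setminus Z_{i-1})$, the split short exact sequence shows that $\{[\overline{U_{i',j}}] : i' \le i\}$ is a basis of $H^{BM}_*(Z_i)$, completing the induction. Setting $i = m$ and invoking $H^{BM}_*(Z) = H_*(Z)$ gives the integral homology statement. Finally, $Z$ has the homotopy type of a finite CW complex, so since $H_*(Z;\Z)$ is free, the universal coefficient theorem gives $H^k(Z;\Z) \cong \Hom(H_k(Z;\Z),\Z)$ with no Ext term; this is free, and it vanishes for odd $k$ because $H_k(Z;\Z)$ does.

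The delicate points, none of them deep, are: that the strata $U_{i,j}$ are closed in $Z_i \setminus Z_{i-1}$, which makes the fundamental-class bookkeeping legitimate; the compatibility of the restriction maps with pushforward of fundamental classes; and the comparison $H^{BM}_* = H_*$ on the closed (compact, as $Z$ is complete) pieces, together with triangulability for the universal-coefficient step. I expect the main obstacle to be purely expository, namely setting up Borel--Moore homology and its long exact sequence of a pair cleanly enough that the parity-and-freeness induction runs without circularity. One can instead bypass $H^{BM}$ by working with the good pairs $(Z_i, Z_{i-1})$ in ordinary cohomology via $H^*(Z_i, Z_{i-1}) \cong \widetilde{H}^*(Z_i/Z_{i-1})$, a wedge of even-dimensional spheres; but the Borel--Moore route has the advantage of making the fundamental-class basis $[\overline{U_{i,j}}]$ manifest, which is exactly the form needed for the applications in the sequel.
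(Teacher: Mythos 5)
The paper does not give a proof of this proposition; it simply cites Fulton (\emph{Young Tableaux}, Appendix~B.3, and \emph{Intersection Theory}). Your Borel--Moore induction is precisely the standard argument used in those references, and the details you supply (parity-vanishing of the connecting map, splitting because the restriction target is free, restriction of the pushed-forward fundamental class of $\overline{U_{i,j}}$ to the generator $[U_{i,j}]$, and the UCT step using that $Z$ is a finite CW complex) are all correct, so your proof is accurate and takes the same route as the source the paper cites.
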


In the setting of Proposition~\ref{proposition:paving}, we say that $Z$ is \emph{paved by affines} $\bigsqcup_{i=0}^m \big(\bigsqcup_{j} U_{i,j} \big)$.
The partial flag variety $G/P$ is paved by affines $\bigsqcup_{w \in W^\Theta} BwP/P$. 
In fact, we set $X_w^P \coloneqq \overline{BwP/P}$ for $w \in W^\Theta$ and $Z_i=\bigcup_{w \in W^\Theta \atop \ell(w)=i} X_w^P$ for $0 \leq i \leq \ell(w_0)$ where $\overline{BwP/P}$ denotes the closure of the Schubert cell $BwP/P$ which is called the \emph{Schubert variety} in $G/P$. As is well-known, the Schubert variety $X_w^P$ has the decomposition $X_w^P = \bigsqcup_{v \in W^\Theta \atop v \leq w} BvP/P$ where $v \leq w$ means the Bruhar order, namely a reduced decomposition for $v$ is a substring of some reduced decomposition for $w$.
Then we have $Z_i \setminus Z_{i-1}=\sqcup_{w \in W^\Theta \atop \ell(w)=i} BwP/P$, so $G/P$ is paved by affines $\bigsqcup_{w \in W^\Theta} BwP/P$. 
By Proposition~\ref{proposition:paving}, the Poincar\'e polynomial of $G/P$ is given by
\begin{align*}
\Poin(G/P, \sqrt{\q}) = \sum_{w \in W^\Theta} \q^{\ell(w)}. 
\end{align*}

Tymoczko proved that every Hessenberg variety in Lie type $A$ is paved by affines by \cite{Tym06}, and regular nilpotent Hessenberg varieties are paved by affines for classical Lie types in \cite{Tym07}. 
For arbitrary Lie types, this result was proved by Precup in \cite{Pre13}.
In order to state the result for the regular nilpotent case, we set 
\begin{align} \label{eq:Nw}
\NN(w) \coloneqq \{\alpha \in \Phi^+ \mid w(\alpha) \in \Phi^{-} \}
\end{align}
for $w \in W$ where $\Phi^{-}$ denotes the set of negative roots.

\begin{theorem} $($\cite[Theorem~4.10 and Corollary~4.13]{Pre13}$)$ \label{theorem:paving_Hessenberg}
Let $N_0$ be the regular nilpotent element defined in \eqref{eq:N0} and $I$ a lower ideal in $\Phi^+$. 
Let $\Hess(N_0,I)$ be the associated regular nilpotent Hessenberg variety in \eqref{eq:Hess(N,I)}. 
\begin{enumerate}
\item For all $w \in W$, the intersection $\Hess(N_0,I) \cap BwB/B$ is nonempty if and only if 
$w^{-1}(\Delta) \subset (-I) \cup \Phi^+$. 
\item If $\Hess(N_0,I) \cap BwB/B$ is nonempty, then 
\begin{align*}
\Hess(N_0,I) \cap BwB/B \cong \C^{|\NN(w) \cap I|}.
\end{align*}
Hence, the regular nilpotent Hessenberg variety $\Hess(N_0,I)$ is paved by affines $\bigsqcup_{w \in W \atop w^{-1}(\Delta) \subset (-I) \cup \Phi^+} \big( \Hess(N_0,I) \cap BwB/B \big)$. 
\item Let $N$ be a regular nilpotent element in $\g$. 
Then the Poincar\'e polynomial of $\Hess(N,I)$ is equal to 
\begin{align} \label{eq:PoincarePolynomialHessGB}
\Poin(\Hess(N,I), \sqrt{\q}) = \sum_{w \in W \atop w^{-1}(\Delta) \subset (-I) \cup \Phi^+} \q^{|\NN(w) \cap I|}. 
\end{align}
\end{enumerate}
\end{theorem}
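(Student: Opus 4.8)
The plan is to prove all three parts simultaneously by analyzing the intersection of $\Hess(N_0,I)$ with each Schubert cell of $G/B$ directly in affine coordinates, and then feeding the resulting paving into Proposition~\ref{proposition:paving}. First I would fix, for each $w \in W$, the identification $BwB/B \cong U_w$, $u \mapsto uwB$, where $U_w = U \cap wU^-w^{-1} \cong \prod_{\alpha \in \NN(w^{-1})} U_\alpha \cong \C^{\ell(w)}$; here $U_\alpha = \{u_\alpha(x) \mid x \in \C\}$ is the root subgroup and the root coordinates are indexed by $\NN(w^{-1}) = \{\alpha \in \Phi^+ \mid w^{-1}\alpha \in \Phi^-\}$. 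In these coordinates the defining condition $\Ad((uw)^{-1})(N_0) \in H_I$ becomes a system of polynomial equations in the coordinates $x_\alpha$: expanding $\Ad(w^{-1})\Ad(u^{-1})(\sum_i E_{\alpha_i})$ in the root-space decomposition, membership in $H_I = \b \oplus \bigoplus_{\gamma \in I}\g_{-\gamma}$ forces the $\g_{-\gamma}$-component to vanish for every $\gamma \in \Phi^+ \setminus I$.

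I would then organize these equations by height. Since conjugating $N_0$ by a single root element $u_\alpha(x_\alpha)$ produces root vectors of strictly larger height plus lower-order terms, the system is ``triangular'': each equation can be solved for one coordinate in terms of the remaining ones, or else forces that coordinate to vanish. This shows that $\Hess(N_0,I)\cap BwB/B$, when nonempty, is cut out of $\C^{\ell(w)}$ by the vanishing of a subset of the coordinates, hence is an affine space; a bookkeeping of exactly which coordinates remain free identifies that set with $\NN(w)\cap I$, giving $\Hess(N_0,I)\cap BwB/B \cong \C^{|\NN(w)\cap I|}$.

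For the nonemptiness criterion I would use the $\C^*$-action on $G/B$ given by $t\cdot gB = \tau(t)gB$, where $\tau\colon \C^* \to T$ is the regular dominant cocharacter with $\alpha_i(\tau(t)) = t$ for all $i$, so that $\Ad(\tau(t))(N_0) = tN_0$. Since $H_I$ is a linear subspace this action preserves $\Hess(N_0,I)$, and since $\tau(t) \in T$ it fixes every $wB$ and acts on the coordinates of $BwB/B$ with strictly positive weights (the weight on $U_\alpha$ is $t^{\height(\alpha)}$), so $\tau(t)\cdot uwB \to wB$ as $t \to 0$. Hence a nonempty $\C^*$-stable subset of $BwB/B$ has $wB$ in its closure; as $\Hess(N_0,I)$ is closed in the complete variety $G/B$, we get $\Hess(N_0,I)\cap BwB/B \ne \emptyset$ if and only if $wB \in \Hess(N_0,I)$, i.e. $\Ad(w^{-1})E_{\alpha_i} \in H_I$ for all $i$, i.e. $w^{-1}\alpha_i \in \Phi^+ \cup (-I)$ for all $i$, which is exactly $w^{-1}(\Delta) \subset (-I)\cup\Phi^+$. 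This gives (1) and (2).

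Finally, set $Z_i = \Hess(N_0,I)\cap\bigcup_{\ell(w)\le i}\overline{BwB/B}$, a closed subvariety since $\bigcup_{\ell(w)\le i}\overline{BwB/B}$ is a closed union of Schubert varieties. This produces a filtration $\emptyset = Z_{-1} \subset Z_0 \subset \cdots \subset Z_{\ell(w_0)} = \Hess(N_0,I)$ with $Z_i \setminus Z_{i-1} = \bigsqcup_{\ell(w)=i,\, w^{-1}(\Delta)\subset(-I)\cup\Phi^+}\big(\Hess(N_0,I)\cap BwB/B\big)$, a disjoint union of affine spaces. Proposition~\ref{proposition:paving} then gives that the cohomology is torsion-free and concentrated in even degrees, with one generator in degree $2|\NN(w)\cap I|$ for each nonempty cell; combined with the isomorphism $\Hess(N,I)\cong\Hess(N_0,I)$ of~\eqref{eq:Hess(N0,H)Theta} this yields~\eqref{eq:PoincarePolynomialHessGB}. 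The main obstacle is the triangularity analysis in the second paragraph: showing the equations defining $\Hess(N_0,I)\cap BwB/B$ can be put in solved form, and that the number of surviving free coordinates is precisely $|\NN(w)\cap I|$, requires the explicit root-combinatorial bookkeeping of Tymoczko and Precup; everything else is formal once that paving is established.
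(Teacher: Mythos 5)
The paper does not prove this theorem; it cites Precup \cite{Pre13}, so there is no in-paper proof to compare against. Your sketch reconstructs the structure of that cited argument accurately. Part (1) is complete and correct as you give it: the $\C^*$-action through the regular dominant cocharacter $\tau$ with $\alpha_i(\tau(t))=t$ rescales $N_0$, hence preserves the closed subvariety $\Hess(N_0,I)\subset G/B$, and contracts each cell $BwB/B$ to the $T$-fixed point $wB$ (the weight on the $\alpha$-coordinate being $\height(\alpha)>0$); completeness of $G/B$ and closedness of $\Hess(N_0,I)$ then reduce nonemptiness of $\Hess(N_0,I)\cap BwB/B$ to $wB\in\Hess(N_0,I)$, which unwinds to $w^{-1}(\Delta)\subset(-I)\cup\Phi^+$. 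Part (3) is formal from Proposition~\ref{proposition:paving} and the isomorphism \eqref{eq:Hess(N0,H)Theta}, as you say.

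The genuine gap, which you flag yourself, is part (2). The assertion that the equations cutting out $\Hess(N_0,I)\cap BwB/B$ in the coordinates indexed by $\NN(w^{-1})$ are triangular, consistent, and leave exactly $|\NN(w)\cap I|$ free coordinates is precisely the technical content of Precup's theorem (building on Tymoczko); it is not a formality. One must track how the nested brackets arising from $\Ad(u^{-1})(\sum_i E_{\alpha_i})$ with $u\in U_w$ distribute over root spaces, show that each constraint (one for each $\alpha\in\NN(w^{-1})\setminus w(-I)$) can be solved for a distinct coordinate without obstruction, and verify that the remaining free coordinates are those in $\NN(w^{-1})\cap w(-I)$, which biject with $\NN(w)\cap I$ via $\alpha\mapsto -w^{-1}\alpha$. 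Your phrasing that the free coordinates \emph{are} $\NN(w)\cap I$ should be read through this bijection, since the ambient coordinates live in $\NN(w^{-1})$, not $\NN(w)$. With part (2) credited to Precup, your outline is a faithful and correct account of the proof being cited.
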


\begin{remark}
It is written as $\Hess(N_0,I) \cap BwB/B \cong \C^{|\NN(w^{-1}) \cap w(-I)|}$ in \cite{Pre13}, but we describe it as $\Hess(N_0,I) \cap BwB/B \cong \C^{|\NN(w) \cap I|}$ above since we have $|\NN(w^{-1}) \cap w(-I)|=|\NN(w) \cap I|$.
\end{remark}

The proposition below is immediately follows from Theorem~\ref{theorem:paving_Hessenberg}.

\begin{proposition} 
Let $N_0$ be the regular nilpotent element defined in \eqref{eq:N0} and $I$ a $\Theta$-ideal in $\Phi^+$.
Let $\Hess_\Theta(N_0,I)$ denote the associated regular nilpotent partial Hessenberg variety in \eqref{eq:Hess(N,I)Theta}. 
Then the following holds.
\begin{enumerate} 
\item For any $w \in W^\Theta$, $\Hess_\Theta(N_0,I) \cap BwP/P$ is nonempty if and only if 
$w^{-1}(\Delta) \subset (-I) \cup \Phi^+$. 
\item If $\Hess_\Theta(N_0,I) \cap BwP/P$ is nonempty, then 
\begin{align*}
\Hess_\Theta(N_0,I) \cap BwP/P \cong \C^{|\NN(w) \cap I|}.
\end{align*}
Hence, the regular nilpotent partial Hessenberg variety $\Hess_\Theta(N_0,I)$ is paved by affines $\bigsqcup_{w \in W^\Theta \atop w^{-1}(\Delta) \subset (-I) \cup \Phi^+} \big( \Hess_\Theta(N_0,I) \cap BwP/P \big)$. 
\item Let $N \in \g$ be a regular nilpotent element.
The Poincar\'e polynomial for $\Hess_\Theta(N,I)$ is  
\begin{align} \label{eq:PoincarePolynomialSum1}
\Poin(\Hess_\Theta(N,I), \sqrt{\q}) = \sum_{w \in W^\Theta \atop w^{-1}(\Delta) \subset (-I) \cup \Phi^+} \q^{|\NN(w) \cap I|}. 
\end{align}
\end{enumerate}
\end{proposition}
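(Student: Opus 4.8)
The plan is to deduce everything from Theorem~\ref{theorem:paving_Hessenberg} by transporting the affine paving of $\Hess(N_0,I)$ through the natural projection $\pi\colon G/B\to G/P$. The central observation I would establish first is that, for every $w\in W^\Theta$, the map $\pi$ restricts to an isomorphism $BwB/B\cap\Hess(N_0,I)\xrightarrow{\ \sim\ }BwP/P\cap\Hess_\Theta(N_0,I)$. This rests on two points. First, since $I$ is a $\Theta$-ideal, Lemma~\ref{lemma:one-to-one partial} shows that $H_I$ is a $\p$-submodule of $\g$, hence stable under $\Ad(P)$ because $P$ is connected; therefore the condition $\Ad(g^{-1})(N_0)\in H_I$ depends only on the coset $gP$, and consequently $\pi^{-1}(\Hess_\Theta(N_0,I))=\Hess(N_0,I)$ as subsets of $G/B$. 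Second, for $w\in W^\Theta$ the isomorphisms \eqref{eq:SchubertGB} and \eqref{eq:SchubertGP} fit, together with $\pi$, into a commutative triangle $U_w\to BwB/B\xrightarrow{\pi}BwP/P$ whose composite is the isomorphism $u\mapsto uwP$; hence $\pi|_{BwB/B}\colon BwB/B\to BwP/P$ is an isomorphism. Combining these, $BwB/B\cap\Hess(N_0,I)=(\pi|_{BwB/B})^{-1}\big(BwP/P\cap\Hess_\Theta(N_0,I)\big)$, which gives the claimed isomorphism.

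Granting this, parts (1) and (2) are immediate: $BwP/P\cap\Hess_\Theta(N_0,I)$ is nonempty if and only if $BwB/B\cap\Hess(N_0,I)$ is, which by Theorem~\ref{theorem:paving_Hessenberg}(1) (applied to $w\in W^\Theta\subset W$) happens exactly when $w^{-1}(\Delta)\subset(-I)\cup\Phi^+$; and in that case it is isomorphic to $\C^{|\NN(w)\cap I|}$ by Theorem~\ref{theorem:paving_Hessenberg}(2).

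For the paving statement I would introduce, on the closed subvariety $\Hess_\Theta(N_0,I)$ of the projective variety $G/P$, the filtration by closed subsets $Z_i\coloneqq\bigcup_{w\in W^\Theta,\ \ell(w)\le i}\big(X_w^P\cap\Hess_\Theta(N_0,I)\big)$, where $X_w^P=\overline{BwP/P}$. Using $X_w^P=\bigsqcup_{v\in W^\Theta,\ v\le w}BvP/P$ one checks that $Z_i\setminus Z_{i-1}=\bigsqcup_{w\in W^\Theta,\ \ell(w)=i}\big(BwP/P\cap\Hess_\Theta(N_0,I)\big)$, a disjoint union of affine cells by part (2) (each empty unless $w^{-1}(\Delta)\subset(-I)\cup\Phi^+$, in which case it is $\C^{|\NN(w)\cap I|}$). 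Proposition~\ref{proposition:paving} then yields that $\Hess_\Theta(N_0,I)$ is paved by affines $\bigsqcup_{w\in W^\Theta}\big(\Hess_\Theta(N_0,I)\cap BwP/P\big)$, and counting cells by dimension gives $\Poin(\Hess_\Theta(N_0,I),\sqrt{\q})=\sum_{w\in W^\Theta,\ w^{-1}(\Delta)\subset(-I)\cup\Phi^+}\q^{|\NN(w)\cap I|}$. Finally, the isomorphism $\Hess_\Theta(N,I)\cong\Hess_\Theta(N_0,I)$ of \eqref{eq:Hess(N0,H)Theta}, valid for an arbitrary regular nilpotent $N$, transfers this Poincaré polynomial and yields \eqref{eq:PoincarePolynomialSum1}.

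No step here is deep; the one demanding the most care is the double identity $\pi^{-1}(\Hess_\Theta(N_0,I))=\Hess(N_0,I)$ together with the compatibility of the two Bruhat stratifications under $\pi$ on the minimal-length representatives. I would emphasize that this is precisely where the hypothesis that $I$ be a genuine $\Theta$-ideal (rather than merely a lower ideal) enters, since it is exactly what makes $H_I$ a $P$-stable subspace of $\g$; the remainder is bookkeeping with the decomposition $G/P=\bigsqcup_{w\in W^\Theta}BwP/P$ and Proposition~\ref{proposition:paving}.
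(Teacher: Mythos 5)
Your proof is correct and takes essentially the same approach as the paper: both reduce parts (1) and (2) to Precup's Theorem~\ref{theorem:paving_Hessenberg} by identifying, for $w\in W^\Theta$, the cell $\Hess_\Theta(N_0,I)\cap BwP/P$ with $\Hess(N_0,I)\cap BwB/B$ through the common parametrization by $U_w$, and then invoke Proposition~\ref{proposition:paving} for the paving and the isomorphism \eqref{eq:Hess(N0,H)Theta} to pass from $N_0$ to a general regular nilpotent $N$. Your phrasing via $\pi^{-1}(\Hess_\Theta(N_0,I))=\Hess(N_0,I)$ and the isomorphism $\pi|_{BwB/B}\colon BwB/B\to BwP/P$ is a mildly more abstract packaging of the same element-level computation the paper performs, with the same essential use of the $P$-stability of $H_I$ for a $\Theta$-ideal $I$.
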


\begin{proof}
(1) For $w \in W^\Theta$, we have 
\begin{align*}
\Hess_\Theta(N_0,I) \cap BwP/P \neq \emptyset & \iff \textrm{there exists} \ b \in B \ \textrm{such that} \ \Ad((bw)^{-1})(N_0) \in H_I \\
& \iff \Hess(N_0,I) \cap BwB/B \neq \emptyset \\
& \iff w^{-1}(\Delta) \subset (-I) \cup \Phi^+ \ \ \ \textrm{(by Theorem~\ref{theorem:paving_Hessenberg}-(1))}.
\end{align*}

(2) If $\Hess_\Theta(N_0,I) \cap BwP/P$ is nonempty, then the intersection $\Hess_\Theta(N_0,I) \cap BwP/P$ is isomorphic to
\begin{align*}
\{ u \in U_w \mid \Ad\big((uw)^{-1}\big)(N_0) \in H_I \}
\end{align*}
under the identification \eqref{eq:SchubertGP}. 
This is also identified with $\Hess(N_0,I) \cap BwB/B$ under \eqref{eq:SchubertGB}. 
Thus, the desired isomorphism $\Hess_\Theta(N_0,I) \cap BwP/P \cong \C^{|\NN(w) \cap I|}$ follows from Theorem~\ref{theorem:paving_Hessenberg}-(2). 
By setting $Z_i=\bigcup_{w \in W^\Theta \atop \ell(w)=i} \big(X_w^P \cap \Hess_\Theta(N_0,I) \big)$ in Proposition~\ref{proposition:paving}, one can see from (1) above that the regular nilpotent partial Hessenberg variety $\Hess_\Theta(N_0,I)$ is paved by affines $\bigsqcup_{w \in W^\Theta \atop w^{-1}(\Delta) \subset (-I) \cup \Phi^+} \big(BwP/P \cap \Hess_\Theta(N_0,I) \big)$. 

(3) Since $\Hess_\Theta(N,I) \cong \Hess_\Theta(N_0,I)$ by \eqref{eq:Hess(N0,H)Theta}, we obtain \eqref{eq:PoincarePolynomialSum1} from Proposition~\ref{proposition:paving} with (2) above.
\end{proof}

\begin{remark}
Fresse gives a result for an affine paving in more general setting in \cite{Fre}. 
\end{remark}

\begin{example} \label{example:PoincarePolynomial_h=(4,5,5,8,10)}
We consider the case of type $A_{n-1}$. 
We use the notations explained in Section~\ref{subsection:Type A description}. 
The Weyl group $W$ is the permutation group $\mathfrak{S}_n$ on $[n]$ and $W^\Theta$ is the set of permutations $w \in \mathfrak{S}_n$ such that $w(\kk_{p-1}+1) < w(\kk_{p-1}+2) < \cdots < w(\kk_p)$ for all $1 \leq p \leq s+1$. 
Here we recall that $\Theta=\{ \alpha_i \in \Delta \mid i \in [n-1]\setminus \KK \}$ and $\KK = \{\kk_1,\ldots,\kk_s \}$ with the convention that $\kk_0=0$ and $\kk_{s+1}=n$. 
One can also see that $w^{-1}(\Delta) \subset (-I) \cup \Phi^+$ if and only if $w^{-1}(w(j)-1) \leq \tilde{h}(j)$ for all $j \in [n]$ where $\tilde{h}$ is defined in \eqref{eq:tilde h} and we take the convention that $w(0)=0$ (cf. \cite[Proposition~2.9]{AHHM} and \cite[Proposition~5.2]{HaTy17}). 

For instance, we take $\KK=\{2,4,5,8\}$ in type $A_9$ and $h=(4,5,5,8,10)$ given in Example~\ref{example:h=(4,5,5,8,10)}. 
Note that the extended Hessenberg function is $\tilde{h}=(4,4,5,5,5,8,8,8,10,10)$. 
Then the permutations $w \in W^\Theta$ such that $w^{-1}(\Delta) \subset (-I) \cup \Phi^+$ consists of the following 18 permutations in one-line notation.
\begin{align*}
&1\,2\,3\,4\,5\,6\,7\,8\,9\,10, \ 1\,2\,3\,5\,4\,6\,7\,8\,9\,10, \ 1\,2\,4\,5\,3\,6\,7\,8\,9\,10, \ 1\,3\,2\,4\,5\,6\,7\,8\,9\,10, \ 1\,3\,2\,5\,4\,6\,7\,8\,9\,10, \\
&1\,4\,2\,3\,5\,6\,7\,8\,9\,10, \ 1\,4\,3\,5\,2\,6\,7\,8\,9\,10, \ 1\,5\,2\,4\,3\,6\,7\,8\,9\,10, \ 1\,5\,3\,4\,2\,6\,7\,8\,9\,10, \ 2\,3\,1\,4\,5\,6\,7\,8\,9\,10, \\
&2\,3\,1\,5\,4\,6\,7\,8\,9\,10, \ 2\,4\,1\,3\,5\,6\,7\,8\,9\,10, \ 2\,5\,1\,4\,3\,6\,7\,8\,9\,10, \ 3\,4\,1\,2\,5\,6\,7\,8\,9\,10, \ 3\,4\,2\,5\,1\,6\,7\,8\,9\,10, \\ 
&3\,5\,2\,4\,1\,6\,7\,8\,9\,10, \ 4\,5\,1\,3\,2\,6\,7\,8\,9\,10, \ 4\,5\,2\,3\,1\,6\,7\,8\,9\,10.  
\end{align*}
In this case, the formula \eqref{eq:PoincarePolynomialSum1} for the Poincar\'e polynomial gives 
\begin{align} \label{eq:PoincarePolynomial_h=(4,5,5,8,10)}
\Poin(\Hess_\KK(N,h), \sqrt{\q}) = 1 + 2\q +4\q^2 + 4\q^3 + 4 \q^4 +2\q^5 + \q^6.  
\end{align}
\end{example}

\subsection{Weyl type subset} \label{subsection:Weyl type subset}

Let $I$ be a lower ideal in $\Phi^+$. 
A subset $Y \subset I$ is of \emph{Weyl type} if $\alpha, \beta \in Y$ and $\alpha+\beta \in I$, then $\alpha+\beta \in Y$, and if $\gamma, \delta \in I \setminus Y$ and $\gamma + \delta \in I$, then $\gamma + \delta \in I \setminus Y$.
This notion was introduced by Sommers and Tymoczko \cite{SoTy}. 
The set of the Weyl type subsets of $I$ is denoted by $\mathcal{W}^I$.
 
For all $w \in W$, the set $\NN(w) \cap I$ is a Weyl type subset of $I$ where $\NN(w)$ is defined in \eqref{eq:Nw}.

\begin{proposition} $($\cite[Proposition~6.3]{SoTy}$)$ \label{proposition:one-to-oneWeylType}
Let $I$ be a lower ideal in $\Phi^+$. 
There is a bijection 
\begin{align*}
\eta: \{w \in W \mid w^{-1}(\Delta) \subset (-I) \cup \Phi^+ \} \to \mathcal{W}^I
\end{align*}
which sends $w$ to $\NN(w) \cap I$.
In paticular, one can write \eqref{eq:PoincarePolynomialHessGB} as 
\begin{align} \label{eq:PoincarePolynomialHessGBWeylType}
\Poin(\Hess(N,I), \sqrt{\q}) = \sum_{Y \in \mathcal{W}^I} \q^{|Y|}.
\end{align} 
\end{proposition}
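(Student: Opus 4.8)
The second assertion is a formal consequence of the first: since $\eta$ preserves cardinalities, $|\eta(w)|=|\NN(w)\cap I|$, and so the bijection merely reindexes the sum in \eqref{eq:PoincarePolynomialHessGB} over $\mathcal{W}^I$, yielding \eqref{eq:PoincarePolynomialHessGBWeylType}. So the content is the bijectivity of $w\mapsto\NN(w)\cap I$ from $\mathcal{S}:=\{w\in W\mid w^{-1}(\Delta)\subset(-I)\cup\Phi^+\}$ onto $\mathcal{W}^I$. I would organize the proof around the classical bijection between $W$ and the \emph{biconvex} subsets of $\Phi^+$ --- those $N\subset\Phi^+$ for which $N$ and $\Phi^+\setminus N$ are both closed under addition of roots: recall that $w\mapsto\NN(w)$ is such a bijection, that $\ell(w)=|\NN(w)|$, and that under it the right weak order on $W$ corresponds to inclusion of biconvex sets, so that every covering relation $v\lessdot w$ has $\NN(v)=\NN(w)\setminus\{\gamma\}$ for a single root $\gamma$. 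The well-definedness of $\eta$, i.e.\ $\NN(w)\cap I\in\mathcal{W}^I$ for every $w$, is the remark recorded just before the proposition and follows in one line from biconvexity of $\NN(w)$ together with the hypothesis that $I$ is a lower ideal.

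Two ingredients then drive the argument. First, an intrinsic description of $\mathcal{S}$: writing $D(w):=\{-w^{-1}(\alpha)\mid\alpha\in\Delta,\ w^{-1}(\alpha)\in\Phi^-\}$, one checks $w\in\mathcal{S}\iff D(w)\subset I$, and $D(w)$ is exactly the set of $\gamma\in\NN(w)$ for which $\NN(w)\setminus\{\gamma\}$ is again biconvex (equivalently, the roots deleted along the covers $v\lessdot w$). Second, a construction of the inverse: for $Y\in\mathcal{W}^I$ let $\langle Y\rangle\subset\Phi^+$ denote the closure of $Y$ under addition of roots. The plan is to prove (i) $\langle Y\rangle\cap I=Y$; (ii) $\langle Y\rangle$ is biconvex; and (iii) the unique $w$ with $\NN(w)=\langle Y\rangle$ lies in $\mathcal{S}$. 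Granting these, $\eta$ is surjective, with $\eta^{-1}(Y)$ the element having inversion set $\langle Y\rangle$. For injectivity I would conversely show that $w\in\mathcal{S}$ forces $\NN(w)=\langle\NN(w)\cap I\rangle$: the inclusion $\langle\NN(w)\cap I\rangle\subset\NN(w)$ holds because $\NN(w)$ is closed, and if it were strict, a saturated chain in the weak order from the element with inversion set $\langle\NN(w)\cap I\rangle$ (which is biconvex by (ii) applied to $\NN(w)\cap I$) up to $w$ would have its final cover delete a root $\gamma\in D(w)\subset I$; but then $\gamma\in\NN(w)\cap I\subset\langle\NN(w)\cap I\rangle$, which is impossible since $\gamma$ lies outside every inversion set strictly below $w$ along the chain. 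Hence $w$ is recovered from $\eta(w)$, and $\eta$ is injective.

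It remains to establish (i)--(iii). Part (i) is a short induction over the steps building $\langle Y\rangle$: a newly adjoined root $\mu+\nu\in I$ has $\mu,\nu\in I$ (as $I$ is a lower ideal), hence $\mu,\nu\in Y$ by the inductive hypothesis, hence $\mu+\nu\in Y$ by the first defining condition of Weyl type --- so the closure never enlarges the trace on $I$. Part (iii) is then immediate from the description of $D(w)$: any $\gamma\in\langle Y\rangle\setminus Y=\langle Y\rangle\setminus I$ is a nontrivial sum $\mu+\nu$ with $\mu,\nu\in\langle Y\rangle\setminus\{\gamma\}$, so $\langle Y\rangle\setminus\{\gamma\}$ is not closed, hence not biconvex, hence $\gamma\notin D(w)$; thus $D(w)\subset Y\subset I$. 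The substantive point --- and the main obstacle --- is part (ii), the co-closedness of $\langle Y\rangle$, which I would prove by induction on height. Suppose $\gamma,\delta\notin\langle Y\rangle$ while $\sigma:=\gamma+\delta\in\langle Y\rangle$, with $\height(\sigma)$ minimal among such failures. Then $\sigma\notin Y$: otherwise $\gamma,\delta\in I$ (lower ideal) and $\gamma,\delta\notin Y$, so the second Weyl-type condition forces $\sigma\in I\setminus Y$, contradicting $\sigma\in Y$. Hence $\sigma$ enters $\langle Y\rangle$ as a sum $\sigma_1+\sigma_2$ with $\sigma_1,\sigma_2\in\langle Y\rangle$ and $\{\sigma_1,\sigma_2\}\neq\{\gamma,\delta\}$. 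An elementary root-system computation then produces, after possibly swapping $\gamma\leftrightarrow\delta$ and $\sigma_1\leftrightarrow\sigma_2$, a positive root $\mu$ with $\sigma_1=\gamma+\mu$ and $\delta=\mu+\sigma_2$, both of height $<\height(\sigma)$; minimality applied to $\sigma_1=\gamma+\mu$ forces $\mu\in\langle Y\rangle$, and then $\delta=\mu+\sigma_2\in\langle Y\rangle$ since $\langle Y\rangle$ is closed --- contradicting $\delta\notin\langle Y\rangle$. Making the relabeling step and the short/long-root bookkeeping go through uniformly in all Lie types is the one genuinely delicate ingredient; this is in essence the computation carried out in \cite{SoTy}.
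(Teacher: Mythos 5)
The paper itself does not prove this proposition --- it cites \cite[Proposition~6.3]{SoTy} without argument --- so there is no in-paper proof to compare against; what follows assesses your proposal on its own terms. The overall architecture is sound and much of it is correct: the reformulation $w \in \mathcal{S} \iff D(w) \subset I$, the identification of $D(w)$ with the roots whose deletion from $\NN(w)$ preserves biconvexity (i.e.\ the roots removed along covers down from $w$ in weak order), part (i) ($\langle Y\rangle\cap I = Y$, by induction on the closure and the first Weyl-type condition combined with $I$ being a lower ideal), part (iii) ($D(w)\subset Y$ because any $\gamma\in\langle Y\rangle\setminus Y$ is a nontrivial sum, so $\langle Y\rangle\setminus\{\gamma\}$ fails closure), and the injectivity argument (a saturated weak-order chain from the element with inversion set $\langle\NN(w)\cap I\rangle$ up to $w$ would have its last cover delete a root in $D(w)\subset I\cap\NN(w)\subset\langle\NN(w)\cap I\rangle$, a contradiction) are all correct modulo part (ii).

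The genuine gap --- which you flag yourself --- is part (ii), the co-closure of $\langle Y\rangle$. The claimed ``elementary root-system computation,'' producing after relabeling a positive root $\mu$ with $\sigma_1=\gamma+\mu$ and $\delta=\mu+\sigma_2$, is not given, and it is not a one-liner. The difference $\sigma_1-\gamma$ ($=\delta-\sigma_2$) need not be a root a priori, and the natural route ``some $(\gamma,\sigma_i)>0$, hence $\sigma_i-\gamma$ is a root'' does not go through unconditionally: from $\gamma+\delta=\sigma_1+\sigma_2$ one only gets $(\gamma,\sigma_1)+(\gamma,\sigma_2)=(\gamma,\gamma)+(\gamma,\delta)$, and this right-hand side can be nonpositive, for example when $\frac{2(\gamma,\delta)}{(\gamma,\gamma)}\leq -2$, which happens in multiply-laced types. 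Making this step work requires genuinely exploiting the minimality of $\height(\sigma)$ and a careful analysis of root strings (or a case split across Lie types); this is precisely where the substance of \cite[Proposition~6.3]{SoTy} lies. As written, your proposal correctly localizes the difficulty but leaves the crux unproved, so it is incomplete as a proof.
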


We generalize the formula in \eqref{eq:PoincarePolynomialHessGBWeylType} to the formula for regular nilpotent partial Hessenberg varieties. 
For this purpose, we set
\begin{align} \label{eq:WeylTypeTheta}
\mathcal{W}^{I, \Theta} \coloneqq \{Y \in \mathcal{W}^I \mid Y \cap \Phi^+_\Theta = \emptyset \}
\end{align}
for a $\Theta$-ideal $I$.

\begin{lemma} $($\cite[Lemma~5.1]{SoTy}$)$ \label{lemma:Nw}
Let $u,v \in W$.
Then, $\NN(v) \subset \NN(uv)$ if and only if $\ell(uv)=\ell(u)+\ell(v)$. 
\end{lemma}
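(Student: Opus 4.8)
The plan is to deduce both implications from a single cardinality identity, using only the standard facts that $|\NN(w)|=\ell(w)$ for every $w\in W$, that $v$ restricts to a bijection $\Phi^+\setminus\NN(v)\xrightarrow{\sim}\Phi^+\setminus\NN(v^{-1})$, and that $\alpha\mapsto-v(\alpha)$ restricts to a bijection $\NN(v)\xrightarrow{\sim}\NN(v^{-1})$ (in particular $|\NN(v^{-1})|=\ell(v)$). The quantity that will control everything is $d\coloneqq|\NN(u)\cap\NN(v^{-1})|$, and the goal is to show both $\NN(v)\subset\NN(uv)$ and $\ell(uv)=\ell(u)+\ell(v)$ are equivalent to $d=0$.

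First I would split $\NN(uv)$ according to the sign of $v(\alpha)$. For $\alpha\in\Phi^+$ with $\alpha\notin\NN(v)$, so $v(\alpha)\in\Phi^+$, one has $uv(\alpha)\in\Phi^-\iff v(\alpha)\in\NN(u)$; pushing this through the bijection $\Phi^+\setminus\NN(v)\xrightarrow{\sim}\Phi^+\setminus\NN(v^{-1})$ identifies $\NN(uv)\setminus\NN(v)$ with $\NN(u)\setminus\NN(v^{-1})$, which has $\ell(u)-d$ elements. For $\alpha\in\NN(v)$, write $v(\alpha)=-\gamma$ with $\gamma=-v(\alpha)\in\NN(v^{-1})\subset\Phi^+$; then $uv(\alpha)=-u(\gamma)\in\Phi^-\iff u(\gamma)\in\Phi^+\iff\gamma\notin\NN(u)$, so the bijection $\alpha\mapsto-v(\alpha)$ identifies $\NN(uv)\cap\NN(v)$ with $\NN(v^{-1})\setminus\NN(u)$, which has $\ell(v)-d$ elements. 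Adding the two pieces gives $\ell(uv)=|\NN(uv)|=\ell(u)+\ell(v)-2d$.

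From the second computation I can also read off that $\NN(v)\subset\NN(uv)$ holds iff $\NN(uv)\cap\NN(v)=\NN(v)$, i.e. iff $\ell(v)-d=\ell(v)$, i.e. iff $d=0$; while the length identity shows $\ell(uv)=\ell(u)+\ell(v)$ iff $d=0$. Combining the two equivalences gives the lemma. The only delicate point is the sign bookkeeping in the two case distinctions and verifying that $\alpha\mapsto v(\alpha)$ and $\alpha\mapsto-v(\alpha)$ really land in (and surject onto) the asserted sets $\Phi^+\setminus\NN(v^{-1})$ and $\NN(v^{-1})$ respectively; once those bijections are in hand, the rest is just adding cardinalities. (Of course, this is also recorded as \cite[Lemma~5.1]{SoTy}, so one may simply cite it.)
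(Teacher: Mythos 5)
The paper does not prove this lemma; it simply cites \cite[Lemma~5.1]{SoTy}, so there is no in-paper argument to compare against. Your proof is correct and self-contained: the two bijections $\Phi^+\setminus\NN(v)\to\Phi^+\setminus\NN(v^{-1})$, $\alpha\mapsto v(\alpha)$, and $\NN(v)\to\NN(v^{-1})$, $\alpha\mapsto -v(\alpha)$, do carry $\NN(uv)\setminus\NN(v)$ onto $\NN(u)\setminus\NN(v^{-1})$ and $\NN(uv)\cap\NN(v)$ onto $\NN(v^{-1})\setminus\NN(u)$ as you claim, yielding the identity $\ell(uv)=\ell(u)+\ell(v)-2\,|\NN(u)\cap\NN(v^{-1})|$, from which both directions of the equivalence drop out by setting $d=0$. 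This is the standard cancellation argument for lengths in Coxeter groups, and it matches the spirit of the Sommers--Tymoczko reference; no gap.
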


\begin{lemma} \label{lemma:WeylTypeNw} 
Let $\Theta \subset \Delta$ and $w \in W$.
Then, $w \in W^\Theta$ if and only if $\NN(w) \cap \Phi^+_\Theta = \emptyset$.
\end{lemma}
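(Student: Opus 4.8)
The plan is to translate the definition of $W^\Theta$ into a condition on the inversion set $\NN(w)$ and then to settle the one nontrivial inclusion by an induction on height carried out inside the sub-root-system $\Phi_\Theta$.

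First I would invoke the standard fact (see \cite{Hum72}) that for a simple root $\alpha_i$ one has $\ell(ws_i)>\ell(w)$ precisely when $w(\alpha_i)\in\Phi^+$, i.e. precisely when $\alpha_i\notin\NN(w)$. Thus $w\in W^\Theta$ is by definition the same as $\Theta\cap\NN(w)=\emptyset$. Since $\Theta\subset\Phi^+_\Theta$, the implication ``$\NN(w)\cap\Phi^+_\Theta=\emptyset\Rightarrow w\in W^\Theta$'' is then immediate, and the entire content of the lemma is the converse: assuming $w(\alpha_i)\in\Phi^+$ for every $\alpha_i\in\Theta$, one must show $w(\beta)\in\Phi^+$ for every $\beta\in\Phi^+_\Theta$.

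I would prove this by induction on the $\Theta$-height $\sum_{\alpha_i\in\Theta}c_i$ of $\beta=\sum_{\alpha_i\in\Theta}c_i\alpha_i\in\Phi^+_\Theta$, the base case $\beta\in\Theta$ being exactly the hypothesis. For the inductive step, when $\beta$ is not simple I would produce some $\alpha_i\in\Theta$ with $\beta-\alpha_i\in\Phi^+_\Theta$ (necessarily of smaller $\Theta$-height): this is the usual ``subtract a simple root'' reduction for positive roots, performed inside the root system $\Phi_\Theta$ with simple system $\Theta$ — it follows from $0<(\beta,\beta)=\sum_i c_i(\beta,\alpha_i)$ (for the $W$-invariant inner product) by choosing $i$ with $c_i>0$ and $(\beta,\alpha_i)>0$ and then applying \cite[Lemma~9.4]{Hum72}, exactly as in the proof that $\Theta$-upper ideals satisfy \eqref{eq:Theta upper ideal}. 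Then $w(\beta)=w(\beta-\alpha_i)+w(\alpha_i)$ is a root written as a sum of two positive roots (the first by the induction hypothesis, the second by the base case), hence is itself positive. This yields $w(\Phi^+_\Theta)\subset\Phi^+$, i.e. $\NN(w)\cap\Phi^+_\Theta=\emptyset$, which completes the argument.

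I do not expect a genuine obstacle; the only point requiring care is ensuring the reduction step is performed inside $\Phi_\Theta$ (with $\Theta$ playing the role of its simple roots), so that the root it produces still lies in $\Phi^+_\Theta$ and not merely in $\Phi^+$. An alternative that avoids the induction is to use that $W^\Theta$ is the set of minimal-length representatives of $W/W_\Theta$, so that $\ell(ws_\beta)\ge\ell(w)$ for \emph{every} $\beta\in\Phi^+_\Theta$, and to combine this with $\ell(ws_\beta)<\ell(w)\iff w(\beta)\in\Phi^-$; but the height induction is more self-contained and matches the style of the earlier arguments in the paper.
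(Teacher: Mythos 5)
Your proof is correct, and it takes a genuinely different route from the paper's. You first observe that, by the standard characterization of length descents, $w\in W^\Theta$ is equivalent to $\Theta\cap\NN(w)=\emptyset$; this makes the implication $\NN(w)\cap\Phi^+_\Theta=\emptyset\Rightarrow w\in W^\Theta$ immediate, and you then prove the converse by an induction on $\Theta$-height, using the ``subtract a simple root'' step inside the subsystem $\Phi_\Theta$ to reduce any $\beta\in\Phi^+_\Theta$ to a simple root of $\Theta$, and closing with the fact that a root which is a nonnegative integer combination of simple roots is positive. The paper does something else for both directions. For the nontrivial direction (yours: induction; the paper's: forward implication) it argues by contradiction, invoking the length-additivity $\ell(ws_\alpha)=\ell(w)+\ell(s_\alpha)$ from the parabolic structure of $W_\Theta$ together with the Sommers--Tymoczko inclusion $\NN(v)\subset\NN(uv)$ when lengths add (Lemma~\ref{lemma:Nw}). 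For the easy direction, the paper also works harder than necessary, decomposing $w=uv$ with $u\in W^\Theta$, $v\in W_\Theta$ and showing $v=e$; you correctly observe this is automatic once one unwinds the definition. Your induction is more elementary and self-contained (it only uses root-system facts like \cite[Lemma~9.4]{Hum72}, already cited in the paper in the proof of the characterization \eqref{eq:Theta upper ideal}, and it avoids Lemma~\ref{lemma:Nw} and the coset decomposition); the paper's route is arguably shorter for a reader already fluent with inversion sets, and it reuses Lemma~\ref{lemma:Nw}, which is needed elsewhere in the paper anyway. Your alternative via minimal coset representatives (using $\ell(ws_\beta)\geq\ell(w)$ for $\beta\in\Phi^+_\Theta$ and $\ell(ws_\beta)<\ell(w)\iff w(\beta)\in\Phi^-$) is also valid and is closer in spirit to the paper's use of length additivity. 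One small stylistic note: when you write $w(\beta)=w(\beta-\alpha_i)+w(\alpha_i)$ ``is a root written as a sum of two positive roots, hence is itself positive,'' you are implicitly using that $w(\beta)$ is a root (automatic) and that a root whose simple-root expansion has only nonnegative coefficients is positive — it would be worth spelling out that the two positive summands give nonnegative coefficients throughout.
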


\begin{proof}
Let $w \in W^\Theta$. 
Suppose in order to obtain a contradiction that $\NN(w) \cap \Phi^+_\Theta \neq \emptyset$.
We take a root $\alpha \in \NN(w) \cap \Phi^+_\Theta$. 
Since $\alpha \in \Phi^+_\Theta$, we have $s_\alpha \in W_\Theta$ (e.g. \cite[Proposition~1.10-(a)]{Hum90}). 
Hence, we have $\ell(ws_\alpha) = \ell(w) + \ell(s_\alpha)$ (\cite[Proposition~1.10-(c)]{Hum90}). 
It follows from Lemma~\ref{lemma:Nw} that $\NN(s_\alpha) \subset \NN(ws_\alpha)$.
This inclusion yields a contradiction. 
In fact, it is clear that $\alpha \in \NN(s_\alpha)$. 
On the other hand, $ws_\alpha(\alpha)=-w(\alpha)$ is a positive root since $\alpha \in \NN(w)$. 
This means that $\alpha \notin \NN(ws_\alpha)$, which gives a contradiction.  
Therefore, we obtain $\NN(w) \cap \Phi^+_\Theta = \emptyset$.

Conversely, we assume that $\NN(w) \cap \Phi^+_\Theta = \emptyset$. 
By \cite[Proposition~1.10-(c)]{Hum90}, there is a unique $u \in W^\Theta$ and a unique $v \in W_\Theta$ such that $w=uv$. 
We also have $\ell(w)=\ell(u)+\ell(v)$. 
Thus, we obtain $\NN(v) \subset \NN(w)$ from Lemma~\ref{lemma:Nw}.
This with the assumption $\NN(w) \cap \Phi^+_\Theta = \emptyset$ yields $\NN(v) \cap \Phi^+_\Theta = \emptyset$.
In other words, for any $\alpha \in \Phi^+_\Theta$, $v(\alpha)$ is a positive root.
This means that $v(\Phi^+_\Theta) = \Phi^+_\Theta$ since $v \in W_\Theta$ (cf. \cite[Proposition~1.10-(a)]{Hum90}).
By \cite[Theorem~1.8]{Hum90}, $v$ is the identity element of $W_\Theta$ and hence we have $w=uv=u \in W^\Theta$ as desired.
\end{proof}

\begin{proposition} \label{proposition:one-to-oneWeylTypeTheta}
Let $\Theta \subset \Delta$ and $I$ a $\Theta$-ideal in $\Phi^+$. 
Then there is a bijection 
\begin{align*}
\eta_\Theta: \{w \in W^\Theta \mid w^{-1}(\Delta) \subset (-I) \cup \Phi^+ \} \to \mathcal{W}^{I, \Theta}
\end{align*}
which sends $w$ to $\NN(w) \cap I$.
\end{proposition}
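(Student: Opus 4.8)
The plan is to realize $\eta_\Theta$ as nothing more than the restriction of the bijection $\eta$ from Proposition~\ref{proposition:one-to-oneWeylType} to a suitable subdomain, so that bijectivity is inherited once we check that the subdomain and its image match up correctly. The single structural fact that makes this work is that $I$ is a $\Theta$-ideal, hence $\Phi^+_\Theta \subseteq I$; this containment is what lets us pass freely between the condition ``$\NN(w)\cap I$ is disjoint from $\Phi^+_\Theta$'' and the condition ``$\NN(w)$ is disjoint from $\Phi^+_\Theta$''.

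First I would check that $\eta_\Theta$ is well defined, i.e.\ that it lands in $\mathcal{W}^{I,\Theta}$. For $w \in W^\Theta$ with $w^{-1}(\Delta)\subseteq (-I)\cup\Phi^+$, Proposition~\ref{proposition:one-to-oneWeylType} already gives $\NN(w)\cap I \in \mathcal{W}^I$, and Lemma~\ref{lemma:WeylTypeNw} gives $\NN(w)\cap\Phi^+_\Theta=\emptyset$ since $w\in W^\Theta$; therefore $(\NN(w)\cap I)\cap\Phi^+_\Theta=\emptyset$, which is exactly the defining condition \eqref{eq:WeylTypeTheta} for membership in $\mathcal{W}^{I,\Theta}$. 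Thus $\eta_\Theta$ is the restriction of $\eta$ to the set $\{w\in W^\Theta \mid w^{-1}(\Delta)\subseteq (-I)\cup\Phi^+\}$, and injectivity of $\eta_\Theta$ is immediate from injectivity of $\eta$. For surjectivity onto $\mathcal{W}^{I,\Theta}$, take $Y\in\mathcal{W}^{I,\Theta}\subseteq\mathcal{W}^I$; by surjectivity of $\eta$ there is $w\in W$ with $w^{-1}(\Delta)\subseteq(-I)\cup\Phi^+$ and $\NN(w)\cap I=Y$. Using $\Phi^+_\Theta\subseteq I$ we get
\begin{align*}
\NN(w)\cap\Phi^+_\Theta = \NN(w)\cap(I\cap\Phi^+_\Theta) = (\NN(w)\cap I)\cap\Phi^+_\Theta = Y\cap\Phi^+_\Theta = \emptyset,
\end{align*}
so $w\in W^\Theta$ by Lemma~\ref{lemma:WeylTypeNw}, and hence $w$ lies in the domain of $\eta_\Theta$ with $\eta_\Theta(w)=Y$. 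This shows that the domain of $\eta_\Theta$ is precisely $\eta^{-1}(\mathcal{W}^{I,\Theta})$ and that $\eta_\Theta$ is a bijection.

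There is no genuine obstacle here: the real content is already encapsulated in Proposition~\ref{proposition:one-to-oneWeylType} and Lemma~\ref{lemma:WeylTypeNw}. The only point requiring care is to invoke the containment $\Phi^+_\Theta\subseteq I$ at the right moment — both when verifying $\eta_\Theta$ lands in $\mathcal{W}^{I,\Theta}$ and when pulling an element of $\mathcal{W}^{I,\Theta}$ back to an element of $W^\Theta$ — together with the observation that $\mathcal{W}^{I,\Theta}$ is by definition a subset of $\mathcal{W}^I$, so that the restriction of $\eta$ makes sense and its image is exactly $\mathcal{W}^{I,\Theta}$.
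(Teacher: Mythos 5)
Your proof is correct and follows essentially the same route as the paper: both rely on the bijection $\eta$ from Proposition~\ref{proposition:one-to-oneWeylType}, the containment $\Phi^+_\Theta \subseteq I$ (to rewrite $\NN(w)\cap I\cap\Phi^+_\Theta$ as $\NN(w)\cap\Phi^+_\Theta$), and Lemma~\ref{lemma:WeylTypeNw} to characterize membership in $W^\Theta$. The paper packages this more tersely as an ``if and only if'' statement rather than separately checking well-definedness, injectivity, and surjectivity, but the content is the same.
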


\begin{proof}
Consider the bijection $\eta$ in Proposition~\ref{proposition:one-to-oneWeylType}. 
Since $I \supset \Phi^+_\Theta$ by a condition of a $\Theta$-ideal $I$, we have $\eta(w) \cap \Phi^+_\Theta=\NN(w) \cap I \cap \Phi^+_\Theta = \NN(w) \cap \Phi^+_\Theta$.
Thus, it follows from Lemma~\ref{lemma:WeylTypeNw} that $\eta(w) \in \mathcal{W}^{I, \Theta}$ if and ony if $w \in W^\Theta$.
Therefore, the bijection $\eta$ induces the bijection $\eta_\Theta$.
\end{proof}

\begin{theorem} \label{theorem:PoincarePolynomialHessGPWeylType}
Let $N$ be a regular nilpotent element in $\g$ and $I$ a $\Theta$-ideal.
Let $\Hess_\Theta(N,I)$ be the associated regular nilpotent partial Hessenberg variety in \eqref{eq:Hess(N,I)Theta}. 
Then we have 
\begin{align*} 
\Poin(\Hess_\Theta(N,I), \sqrt{\q}) = \sum_{Y \in \mathcal{W}^{I, \Theta}} \q^{|Y|}
\end{align*} 
where $\mathcal{W}^{I, \Theta}$ is defined in \eqref{eq:WeylTypeTheta}.
\end{theorem}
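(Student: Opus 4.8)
The plan is to deduce the statement directly from two facts already established in this section: the affine-paving summation formula \eqref{eq:PoincarePolynomialSum1} and the bijection of Proposition~\ref{proposition:one-to-oneWeylTypeTheta}. First I would recall from \eqref{eq:PoincarePolynomialSum1} that, since $I$ is a $\Theta$-ideal and in particular a lower ideal,
\begin{align*}
\Poin(\Hess_\Theta(N,I), \sqrt{\q}) = \sum_{w \in W^\Theta \atop w^{-1}(\Delta) \subset (-I) \cup \Phi^+} \q^{|\NN(w) \cap I|}.
\end{align*}
The index set of this sum is precisely $\{w \in W^\Theta \mid w^{-1}(\Delta) \subset (-I) \cup \Phi^+\}$.

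Next I would invoke the bijection
\begin{align*}
\eta_\Theta \colon \{w \in W^\Theta \mid w^{-1}(\Delta) \subset (-I) \cup \Phi^+ \} \xrightarrow{\ \sim\ } \mathcal{W}^{I, \Theta}, \qquad w \mapsto \NN(w) \cap I,
\end{align*}
from Proposition~\ref{proposition:one-to-oneWeylTypeTheta}. Because $\eta_\Theta(w) = \NN(w) \cap I$ \emph{as a set}, the exponent $|\NN(w) \cap I| = |\eta_\Theta(w)|$ of $\q$ in the $w$-th term is carried to the cardinality of the corresponding Weyl type subset. Re-indexing the displayed sum along $\eta_\Theta$, i.e. substituting $Y = \eta_\Theta(w)$, immediately gives
\begin{align*}
\Poin(\Hess_\Theta(N,I), \sqrt{\q}) = \sum_{Y \in \mathcal{W}^{I, \Theta}} \q^{|Y|},
\end{align*}
which is the claim.

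I expect no genuine obstacle here: the theorem is a one-line reindexing once the affine paving of $\Hess_\Theta(N_0,I)$ (hence \eqref{eq:PoincarePolynomialSum1}, via \eqref{eq:Hess(N0,H)Theta} and Proposition~\ref{proposition:paving}) and the bijection $\eta_\Theta$ are in hand. The only point I would take care to confirm is that $\eta_\Theta$ is a bijection \emph{onto} $\mathcal{W}^{I,\Theta}$ rather than merely an injection into $\mathcal{W}^I$; but this surjectivity is exactly the content of Proposition~\ref{proposition:one-to-oneWeylTypeTheta}, which in turn rests on Lemma~\ref{lemma:WeylTypeNw} characterizing $W^\Theta$ by the condition $\NN(w) \cap \Phi^+_\Theta = \emptyset$ together with the Sommers--Tymoczko bijection $\eta$ of Proposition~\ref{proposition:one-to-oneWeylType}. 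So the proof should be essentially a citation of \eqref{eq:PoincarePolynomialSum1} followed by a citation of Proposition~\ref{proposition:one-to-oneWeylTypeTheta}.
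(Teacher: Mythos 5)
Your proof is correct and follows exactly the paper's own argument: the paper's proof is precisely the one-line combination of \eqref{eq:PoincarePolynomialSum1} with the bijection of Proposition~\ref{proposition:one-to-oneWeylTypeTheta}.
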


\begin{proof}
The result follows from \eqref{eq:PoincarePolynomialSum1} and Proposition~\ref{proposition:one-to-oneWeylTypeTheta}. 
\end{proof}

\bigskip

\section{A product formula for the Poincar\'e polynomial of $\Hess_\Theta(N,I)$} \label{sect:product formula}

We see the summation formula in \eqref{eq:PoincarePolynomialHessGB}, while the Poincar\'e polynomial of a regular nilpotent Hessenberg variety $\Hess(N,I)$ is also discribed as
\begin{align} \label{eq:PoincarePolynomialHessGBproduct}
\Poin(\Hess(N,I),\sqrt{\q}) = \prod_{\alpha \in I} \frac{1-\q^{\height(\alpha)+1}}{1-\q^{\height(\alpha)}}. 
\end{align}
The product formula above was announced by Dale Peterson in \cite[Theorem~3]{BrCa} without proof and it was proved by \cite{AHMMS} from the point of view of hyperplane arrangements. 
In this section we generalize the formula for $\Hess(N,I)$ to the formula of regular nilpotent partial Hessenberg varieties $\Hess_\Theta(N,I)$.

\subsection{Fiber bundle}
We first recall that the natural projection $\pi: G/B \to G/P$ is a fiber bundle with fiber $P/B$.
In fact, since the natural map $G \to G/P$ is a principal $P$-bundle, $G \times_P P/B$ is a fiber bundle over $G/P$ with fiber $P/B$.
Here, $G \times_P P/B$ is defined by the quotient of the direct product $G \times P/B$ by the left $P$-action given by $p_1 \cdot (g,p_2B) = (gp_1^{-1},p_1p_2B)$ for $g \in G$, $p_2B \in P/B$, and $p_1 \in P$. 
Then we have the identification $G/B \cong G \times_P P/B$ which sends $gB$ to $[g,eB]$ where $e$ denotes the identity element of $P$. 
Hence, we conclude that the natural projection $\pi: G/B \to G/P$ is a fiber bundle with fiber $P/B$. 

The following lemma is a key property to study regular nilpotent partial Hessenberg varieties. 
We remark that it was discussed in \cite[Definition~2.4 and surrounding discussion]{KiLe} for regular semisimple partial Hessenberg varieties in type $A$.

\begin{lemma} \label{lemma:fiber bundle}
Let $N$ be a regular nilpotent element of $\g$ and $I$ a $\Theta$-ideal in $\Phi^+$.   
Then, the natural projection $\pi_I: \Hess(N,I) \to \Hess_\Theta(N,I)$ is a fiber bundle with fiber $P/B$. 
\end{lemma}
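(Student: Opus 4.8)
The plan is to realize $\Hess(N,I)$ as the restriction of the $P/B$-bundle $\pi\colon G/B \to G/P$ over the subvariety $\Hess_\Theta(N,I) \subset G/P$, and then show this restriction is exactly $\Hess(N,I)$. More precisely, recall the identification $G/B \cong G\times_P P/B$ sending $gB$ to $[g,eB]$. I would show that the preimage $\pi^{-1}(\Hess_\Theta(N,I))$ coincides with $\Hess(N,I)$: then $\pi_I$ is the restriction of a fiber bundle to the preimage of a subspace, which is automatically a fiber bundle with the same fiber $P/B$.

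First I would establish the set-theoretic identity $\pi_I(\Hess(N,I)) \subset \Hess_\Theta(N,I)$ and $\pi^{-1}(\Hess_\Theta(N,I)) \subset \Hess(N,I)$. The inclusion $\pi_I(\Hess(N,I)) \subset \Hess_\Theta(N,I)$ is immediate: if $\Ad(g^{-1})(N) \in H_I$ then certainly $gP \in \Hess_\Theta(N,I)$ since the defining condition is the same, using that $H_I$ is the $\p$-Hessenberg space attached to the $\Theta$-ideal $I$ via Lemma~\ref{lemma:one-to-one partial}. For the reverse inclusion, suppose $gP \in \Hess_\Theta(N,I)$, so $\Ad(g^{-1})(N) \in H_I$. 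I need to check that every point in the fiber $\pi^{-1}(gP)$, i.e. every $gpB$ with $p \in P$, also lies in $\Hess(N,I)$; equivalently $\Ad((gp)^{-1})(N) = \Ad(p^{-1})\Ad(g^{-1})(N) \in H_I$. This follows because $H_I$ is a $\p$-submodule of $\g$ (again Lemma~\ref{lemma:one-to-one partial}), hence $\Ad(P)$-invariant: $\Ad(p^{-1})(H_I) \subseteq H_I$, so $\Ad(p^{-1})\Ad(g^{-1})(N) \in H_I$. This is the crux of the argument, and the fact that $I$ is a $\Theta$-ideal (not merely a lower ideal) is precisely what makes $H_I$ a $\p$-module and thus what makes the fiber "constant."

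Having shown $\pi^{-1}(\Hess_\Theta(N,I)) = \Hess(N,I)$ as sets (and the scheme/variety structures agree since both are closed subvarieties cut out by the same incidence conditions), I would conclude with the general principle: if $f\colon E \to X$ is a locally trivial fiber bundle with fiber $F$ and $X' \subseteq X$ is any subspace, then $f|_{f^{-1}(X')}\colon f^{-1}(X') \to X'$ is a locally trivial fiber bundle with fiber $F$, because local trivializations $f^{-1}(U) \cong U \times F$ restrict to $f^{-1}(U\cap X') \cong (U\cap X') \times F$. Applying this with $E = G/B$, $X = G/P$, $F = P/B$, $f = \pi$, and $X' = \Hess_\Theta(N,I)$ gives that $\pi_I\colon \Hess(N,I) \to \Hess_\Theta(N,I)$ is a fiber bundle with fiber $P/B$, as claimed.

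The main obstacle is genuinely the module-theoretic step: one must use that $H_I$ is a $\p$-submodule, which holds exactly because $I$ is a $\Theta$-ideal. If $I$ were merely a lower ideal, $H_I$ would only be $\b$-invariant, the fibers of $\pi_I$ would vary, and the statement would fail. Everything else is formal bundle theory. I would also remark that, by the isomorphism $\Hess_\Theta(N,I) \cong \Hess_\Theta(N_0,I)$ of \eqref{eq:Hess(N0,H)Theta} (and the analogous one for $G/B$), there is no loss in taking $N = N_0$, though this reduction is not strictly needed for the argument.
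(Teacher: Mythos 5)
Your proof is correct and follows essentially the same route as the paper: both hinge on the $\Ad(P)$-stability of $H_I$ (which holds because $I$ is a $\Theta$-ideal, via Lemma~\ref{lemma:one-to-one partial}) and then inherit local triviality from the ambient $P/B$-bundle $\pi\colon G/B\to G/P$. Your explicit identification $\pi^{-1}(\Hess_\Theta(N,I))=\Hess(N,I)$ makes the restriction-of-a-bundle step fully transparent, whereas the paper phrases the same idea by directly computing $\pi_I^{-1}(g_1P)=g_1(P/B)$ inside $G_{H_I}/B$ and appealing to the inherited local triviality.
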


\begin{proof}
Consider the following commutative diagram
\begin{center}
\begin{tikzcd}
G/B \arrow[r, "\pi"] &[0.5em] G/P \\
\Hess(N,I) \arrow[r, "\pi_I"']  \arrow[hookrightarrow,u, "{\rm inclusion}"]                                         & \Hess_\Theta(N,I). \arrow[hookrightarrow,u, "{\rm inclusion}"'] 
\end{tikzcd}
\end{center}
It suffices to show that $\pi_I^{-1}(gP)$ is isomorphic to $P/B$ for any $gP \in \Hess_\Theta(N,I)$ since the local triviality is inherited from that of the fiber bundle $\pi: G/B \to G/P$ with fiber $P/B$.
By setting 
\begin{align} \label{eq:GNHI}
G_{N,H_I} \coloneqq \{g \in G \mid \Ad(g^{-1})(N) \in H_I \}, 
\end{align}
the regular nilpotent partial Hessenberg variety $\Hess_\Theta(N,I)$ is identified with the quotient space $G_{N,H_I}/P$. 
Then the fiber of $\pi_I: G_{N,H_I}/B \to G_{N,H_I}/P$ at $g_1P \in G_{N,H_I}/P$ is  
\begin{align*}
\pi_I^{-1}(g_1P) = \{gB \in G_{N,H_I}/B \mid g_1^{-1}g \in P \} = g_1(P/B) \cong P/B.
\end{align*}
This completes the proof. 
\end{proof}

Recall that we denote by $X_w^P$ the Schubert variety associated with $w \in W^\Theta$ in $G/P$. 
For simplicity, we write $X_w$ for the Schubert variety associated with $w \in W$ in $G/B$. 
The first equality in the following lemma is a well-known fact. 

\begin{lemma} \label{lemma:P/B}
Let $N_0$ be the regular nilpotent element of $\g$ defined in \eqref{eq:N0}. 
Then we have the follwoing equalities: 
\begin{align*}
P/B = X_{w_0^{\Theta}} = \Hess(N_0,\Phi^+_\Theta) (= \Hess(N_0,\p))
\end{align*}
where $X_{w_0^{\Theta}}$ is the Schubert variety in $G/B$ associated with the longest element  $w_0^{\Theta}$ of $W_\Theta$. 
\end{lemma}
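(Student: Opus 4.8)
The plan is to treat the two equalities separately, reducing the second one (the new content) to a comparison of dimensions.

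\emph{The equality $P/B = X_{w_0^\Theta}$.} This is standard, and I would prove it as follows. From the Bruhat decomposition of the parabolic subgroup, $P = \bigsqcup_{w \in W_\Theta} BwB$, one gets $P/B = \bigsqcup_{w \in W_\Theta} BwB/B$, while the usual cell decomposition of a Schubert variety in $G/B$ gives $X_{w_0^\Theta} = \bigsqcup_{v \le w_0^\Theta} BvB/B$, where $\le$ is the Bruhat order on $W$. Hence it suffices to check $\{v \in W \mid v \le w_0^\Theta\} = W_\Theta$. For the inclusion ``$\subseteq$'' one uses the subword property: a reduced expression of any $v \le w_0^\Theta$ is a subword of a reduced expression of $w_0^\Theta$, and since $w_0^\Theta \in W_\Theta$ every reduced expression of $w_0^\Theta$ involves only the simple reflections $s_i$ with $\alpha_i \in \Theta$, so the same holds for $v$, i.e. $v \in W_\Theta$. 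For ``$\supseteq$'' one uses that $w_0^\Theta$ is the longest element, hence the unique maximum, of $(W_\Theta,\le)$.

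\emph{The equality $P/B = \Hess(N_0,\Phi^+_\Theta)$.} First I would record that $H_{\Phi^+_\Theta} = \b \oplus \bigoplus_{\alpha \in \Phi^+_\Theta}\g_{-\alpha} = \p$; in particular $\Phi^+_\Theta$ is a lower ideal (if $\gamma \in \Phi^+$ satisfies $\gamma \preceq \alpha$ for some $\alpha \in \Phi^+_\Theta$, comparing coefficients of simple roots forces $\gamma \in \Phi^+_\Theta$), so $\Hess(N_0,\Phi^+_\Theta) = \Hess(N_0,\p)$ is a bona fide regular nilpotent Hessenberg variety. The inclusion $P/B \subseteq \Hess(N_0,\p)$ is immediate: $N_0 = \sum_{i=1}^n E_{\alpha_i}$ lies in $\b \subset \p$, and $\p$ is $\Ad(P)$-stable, so $\Ad(g^{-1})(N_0) \in \p$ for every $g \in P$. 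To upgrade this to an equality I would argue by dimension. On one hand $P/B$ is the fiber $\pi^{-1}(eP)$ of $\pi\colon G/B \to G/P$, hence a closed irreducible subvariety of $G/B$, and it is isomorphic to the full flag variety $\Levi_\Theta/(B\cap\Levi_\Theta)$, so $\dim_\C P/B = |\Phi^+_\Theta| = \ell(w_0^\Theta)$. On the other hand, by Proposition~\ref{proposition:propertiesHess(N,I)} the variety $\Hess(N_0,\Phi^+_\Theta)$ is irreducible of complex dimension $|\Phi^+_\Theta|$. A closed irreducible subvariety of an irreducible variety of the same dimension equals the whole variety, so $P/B = \Hess(N_0,\p)$.

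\emph{On the difficulty, and a variant.} There is no real obstacle; the only points needing a little care are the subword argument and the bookkeeping $\dim_\C P/B = |\Phi^+_\Theta|$. If one prefers to avoid Proposition~\ref{proposition:propertiesHess(N,I)}, there is an alternative: since $\p$ is $\Ad(P)$-stable, $\Hess(N_0,\p)$ is a union of fibers of $\pi$, hence equals $\pi^{-1}(\Hess_\Theta(N_0,\Phi^+_\Theta))$; and because every Weyl type subset of $\Phi^+_\Theta$ is contained in $\Phi^+_\Theta$, the set $\mathcal{W}^{\Phi^+_\Theta,\Theta}$ consists of the empty set alone, so Theorem~\ref{theorem:PoincarePolynomialHessGPWeylType} shows $\Hess_\Theta(N_0,\Phi^+_\Theta)$ is a single point, necessarily $\{eP\}$, whence $\Hess(N_0,\p) = \pi^{-1}(eP) = P/B$.
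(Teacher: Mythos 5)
Your main argument is essentially the same as the paper's: both establish $P/B = X_{w_0^\Theta}$ via the Bruhat decomposition $P = BW_\Theta B$, observe $P/B \subseteq \Hess(N_0,\p)$ because $\p$ is $\Ad(P)$-stable and contains $N_0$, and then upgrade to an equality by comparing dimensions and invoking the irreducibility of $\Hess(N_0,\Phi^+_\Theta)$ from Proposition~\ref{proposition:propertiesHess(N,I)}. The alternative you sketch (showing $\Hess_\Theta(N_0,\Phi^+_\Theta) = \{eP\}$ via $\mathcal{W}^{\Phi^+_\Theta,\Theta} = \{\emptyset\}$ and pulling back along $\pi$) is a valid and pleasant variant that avoids the irreducibility input, at the cost of leaning on the affine-paving machinery of Section~\ref{sect:summation formula}.
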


\begin{proof}
It is known that $P=P_\Theta=BW_{\Theta}B$ (e.g. \cite[Proposition~14.18]{Bor91} or \cite[Section~30]{Hum75}), so it holds that $P/B = \sqcup_{w \in W_\Theta} BwB/B = X_{w_0^{\Theta}}$.
Let $G_{N_0,H_I}$ be the set in \eqref{eq:GNHI} for $N=N_0$. 
Then we have that 
\begin{align} \label{eq:PsubsetGHI}
P \subset G_{N_0,H_I}
\end{align}
for any $\Theta$-ideal $I \subset \Phi^+$. 
In fact, if $x \in H_I$ and $g \in P$, then $\Ad(g^{-1})(x) \in H_I$ since $H_I$ is a $\p$-submodule.
The inclusion \eqref{eq:PsubsetGHI} yields that $P/B \subset G_{N_0,H_I}/B = \Hess(N_0,I)$ for arbitarary $\Theta$-ideal $I \subset \Phi^+$.
In particular, $P/B \subset \Hess(N_0,\Phi^+_\Theta) (= \Hess(N_0,\p))$.  
By Proposition~\ref{proposition:propertiesHess(N,I)} one has $\dim_\C P/B = \dim_\C \p/\b = |\Phi^+_\Theta| = \dim_\C \Hess(N_0,\Phi^+_\Theta)$ and $\Hess(N_0,\Phi^+_\Theta)$ is irreducible, so we conclude that $P/B = \Hess(N_0,\Phi^+_\Theta)$ as desired.
\end{proof}

\begin{remark}
It is also well-known that $P/B$ is isomorphic to $\Levi_\Theta/B_\Theta$ where $B_\Theta \coloneqq \Levi_\Theta \cap B$. 
In fact, one can easily see that $P/B$ is the image of the closed embedding $\Levi_\Theta/B_\Theta \hookrightarrow G/B$ by a similar argument of Lemma~\ref{lemma:P/B}. 
\end{remark}

\begin{proposition} \label{proposition:propertyHess(N,I)Theta}
Let $N$ be a regular nilpotent element in $\g$ and $I$ a $\Theta$-ideal in $\Phi^+$.
Let $\Hess_\Theta(N,I)$ denotes the associated regular nilpotent partial Hessenberg variety in \eqref{eq:Hess(N,I)Theta}. 
Then the following holds.
\begin{enumerate}
\item $\Hess_\Theta(N,I)$ is irreducible. 
\item The complex dimension is given by $\dim_\C \Hess_\Theta(N,I) = |I| - |\Phi^+_\Theta|$. 
\item The Poincar\'e polynomial is discribed as
\begin{align*}
\Poin(\Hess_\Theta(N,I),\sqrt{\q}) = \prod_{\alpha \in I \setminus \Phi^+_\Theta} \frac{1-\q^{\height(\alpha)+1}}{1-\q^{\height(\alpha)}}. 
\end{align*}
\end{enumerate}
\end{proposition}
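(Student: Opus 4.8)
The plan is to derive all three statements from the fiber bundle $\pi_I \colon \Hess(N,I) \to \Hess_\Theta(N,I)$ with fiber $P/B$ established in Lemma~\ref{lemma:fiber bundle}, combined with the facts about $\Hess(N,I)$ from Proposition~\ref{proposition:propertiesHess(N,I)} and the product formula \eqref{eq:PoincarePolynomialHessGBproduct}. For (1): the projection $\pi_I$ is a surjective morphism and $\Hess(N,I)$ is irreducible by Proposition~\ref{proposition:propertiesHess(N,I)}, so its image $\Hess_\Theta(N,I)$ is irreducible. For (2): since $\pi_I$ is a locally trivial fiber bundle, $\dim_\C \Hess(N,I) = \dim_\C \Hess_\Theta(N,I) + \dim_\C(P/B)$; using $\dim_\C \Hess(N,I) = |I|$ from Proposition~\ref{proposition:propertiesHess(N,I)} and $\dim_\C(P/B) = \dim_\C(\p/\b) = |\Phi^+_\Theta|$ (from the description $\p = \b \oplus \bigoplus_{\beta \in \Phi^+_\Theta}\g_{-\beta}$ used in the proof of Lemma~\ref{lemma:one-to-one partial}), we get $\dim_\C \Hess_\Theta(N,I) = |I| - |\Phi^+_\Theta|$.

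For (3) I would apply the Leray--Hirsch theorem with rational coefficients. First note that $\Hess(N,I)$, $\Hess_\Theta(N,I)$, and $P/B$ are all paved by affines --- by Theorem~\ref{theorem:paving_Hessenberg}, by the proposition immediately following it, and because $P/B$ is a flag variety --- so their rational cohomology is torsion-free and concentrated in even degrees, and Poincar\'e polynomials behave well. By \eqref{eq:Hess(N0,H)Theta} (and its compatibility with $\pi_I$) we may take $N = N_0$; then $eP \in \Hess_\Theta(N_0,I)$ and the fiber $\pi_I^{-1}(eP)$ is exactly the subvariety $P/B \subset \Hess(N_0,I)$ appearing in Lemma~\ref{lemma:P/B}. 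The crucial input is that the restriction map $H^*(\Hess(N_0,I)) \to H^*(P/B)$ is surjective; this follows because the restriction $H^*(G/B) \to H^*(P/B)$, which is a standard surjection, factors as $H^*(G/B) \twoheadrightarrow H^*(\Hess(N_0,I)) \to H^*(P/B)$ through the inclusions $P/B \subset \Hess(N_0,I) \subset G/B$, the first map being surjective by \cite{AHMMS} (as recalled in the introduction).

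Choosing classes in $H^*(\Hess(N_0,I))$ restricting to a $\Q$-basis of $H^*(P/B)$, these restrict to a basis on every fiber of $\pi_I$ (the base is connected by (1), and "restricting to a basis of the fiber cohomology" is locally constant along the base). Hence Leray--Hirsch gives an isomorphism of graded $\Q$-vector spaces $H^*(\Hess(N,I)) \cong H^*(\Hess_\Theta(N,I)) \otimes_\Q H^*(P/B)$, so
\[
\Poin(\Hess(N,I),\sqrt{\q}) = \Poin(\Hess_\Theta(N,I),\sqrt{\q}) \cdot \Poin(P/B, \sqrt{\q}).
\]
To finish, I would compute the two known factors: \eqref{eq:PoincarePolynomialHessGBproduct} gives $\Poin(\Hess(N,I),\sqrt{\q}) = \prod_{\alpha \in I}\frac{1-\q^{\height(\alpha)+1}}{1-\q^{\height(\alpha)}}$, while Lemma~\ref{lemma:P/B} identifies $P/B$ with $\Hess(N_0,\Phi^+_\Theta)$, so applying \eqref{eq:PoincarePolynomialHessGBproduct} to the lower ideal $\Phi^+_\Theta$ yields $\Poin(P/B,\sqrt{\q}) = \prod_{\alpha \in \Phi^+_\Theta}\frac{1-\q^{\height(\alpha)+1}}{1-\q^{\height(\alpha)}}$. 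Dividing and using $\Phi^+_\Theta \subset I$ (part of the definition of a $\Theta$-ideal) gives the asserted product over $I \setminus \Phi^+_\Theta$.

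I expect the main obstacle to be the verification of the Leray--Hirsch hypotheses --- surjectivity of $H^*(\Hess(N_0,I)) \to H^*(P/B)$ and the passage from one fiber to all fibers --- but once the factorization through $H^*(G/B)$ is observed the argument is routine, and the rest is bookkeeping with the two product formulas.
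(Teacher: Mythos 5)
Your proof is correct and takes essentially the same route as the paper: (1) and (2) are identical, and (3) proceeds via Leray--Hirsch applied to the fiber bundle from Lemma~\ref{lemma:fiber bundle}, with surjectivity of $H^*(\Hess(N_0,I))\to H^*(P/B)$ deduced from surjectivity of $H^*(G/B)\to H^*(P/B)$ (the Schubert-variety restriction) factoring through $H^*(\Hess(N_0,I))$, and then the final computation by comparing the two instances of \eqref{eq:PoincarePolynomialHessGBproduct} via Lemma~\ref{lemma:P/B}. (Your invocation of \cite{AHMMS} for surjectivity of $H^*(G/B)\twoheadrightarrow H^*(\Hess(N_0,I))$ is harmless but unnecessary here: surjectivity of the composite already forces surjectivity of the outer map.)
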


\begin{proof}
(1)  $\Hess(N,I)$ is irreducible by Proposition~\ref{proposition:propertiesHess(N,I)} and $\pi_I: \Hess(N,I) \to \Hess_\Theta(N,I)$ is surjective, so $\Hess_\Theta(N,I)$ is also irreducible.

(2) It follows from Lemma~\ref{lemma:fiber bundle} that $\dim_\C \Hess_\Theta(N,I) = \dim_\C \Hess(N,I) - \dim_\C P/B = |I| - |\Phi^+_\Theta|$. 
Here, we used Proposition~\ref{proposition:propertiesHess(N,I)} for the equality $\dim_\C \Hess(N,I)=|I|$.

(3) Consider the following commutative diagram
\begin{center}
\begin{tikzcd}
P/B \arrow[r, "\iota"] &[0.5em] G/B \arrow[r, "\pi"] &[0.5em] G/P \\
P/B  \arrow[r, "\iota_I"] \arrow[u,equal] &[0.5em]  \Hess(N,I) \arrow[r, "\pi_I"]  \arrow[hookrightarrow,u, "j_I"']  & \Hess_\Theta(N,I) \arrow[hookrightarrow,u, "j_{I, \Theta}"'] 
\end{tikzcd}
\end{center}
where the horizontal arrows denote the fiber bundles by Lemma~\ref{lemma:fiber bundle}. 
This induces the following commutative diagram:
\begin{center}
\begin{tikzcd}
H^*(G/P) \arrow[r, "\pi^*"] \arrow[rightarrow,d, "j_{I, \Theta}^*"'] &[0.5em] H^*(G/B) \arrow[rightarrow,d, "j_I^*"'] \arrow[r, "\iota^*"] &[0.5em] H^*(P/B) \\
H^*(\Hess_\Theta(N,I))  \arrow[r, "\pi_I^*"] &[0.5em]  H^*(\Hess(N,I)) \arrow[r, "\iota_I^*"]  & H^*(P/B). \arrow[u,equal]  
\end{tikzcd}
\end{center}
As is well-known, the restriction map from the cohomology of $G/B$ to the cohomology of a Schubert variety is surjective (cf.\cite{BGG}). 
This fact with Lemma~\ref{lemma:P/B} yields that $\iota^*$ is surjective.
By the commutative diagram above, $\iota_I^*$ is also surjective.
Hence, by the Leray--Hirsch theorem we deduce that 
\begin{align*}
\Poin(\Hess(N,I),\sqrt{\q}) &= \Poin(P/B,\sqrt{\q}) \Poin(\Hess_\Theta(N,I),\sqrt{\q}) \\
&= \Poin(\Hess(N_0,\Phi^+_\Theta),\sqrt{\q}) \Poin(\Hess_\Theta(N,I),\sqrt{\q})
\end{align*}
where we used Lemma~\ref{lemma:P/B} for the last equality above.
Therefore, we conclude from \eqref{eq:PoincarePolynomialHessGBproduct} that 
\begin{align*}
\Poin(\Hess_\Theta(N,I),\sqrt{\q}) = \prod_{\alpha \in I \setminus \Phi^+_\Theta} \frac{1-\q^{\height(\alpha)+1}}{1-\q^{\height(\alpha)}}. 
\end{align*}
\end{proof}

\begin{remark}
In type $A_{n-1}$, set $\KK = \{\kk_1,\ldots,\kk_s \}$ with $0=\kk_0 < \kk_1 < \kk_2 < \cdots < \kk_s < \kk_{s+1}=n$. 
Let $h: \KK \cup \{n \} \to \KK \cup \{n \}$ be a partial Hessenberg function. 
Then, one can see that Proposition~\ref{proposition:propertyHess(N,I)Theta}-(2) for type $A_{n-1}$ is described as 
\begin{align*}
\dim_\C \Hess_\KK(N,h) = \sum_{p=1}^{s} (\kk_p-\kk_{p-1}) (h(\kk_p)-\kk_p).
\end{align*}
Remark that the formula for regular semisimple partial Hessenberg varieties in type $A_{n-1}$ was given in \cite[Theorem~2.3]{KiLe}.
\end{remark}

\subsection{Height distribution} \label{subsection:Height distribution}

For a subset $Y \subset \Phi^+$, we define
\begin{align*}
\lambda_i^Y \coloneqq |\{\alpha \in Y \mid \height(\alpha)=i \}| \ \ \ \textrm{for} \ 1 \leq i \leq \rr
\end{align*}
where $\rr \coloneqq \max\{\height(\alpha) \mid \alpha \in Y \}$.
We call a sequence $(\lambda_1^Y, \lambda_2^Y, \cdots, \lambda_\rr^Y)$ the \emph{height distribution in $Y$}. 
We set an integer 
\begin{align} \label{eq:kiY}
\mm_i^Y \coloneqq \lambda_i^Y-\lambda_{i+1}^Y \ \ \ \textrm{for} \ 1 \leq i \leq \rr 
\end{align}
with the convention $\lambda_{\rr+1}^Y \coloneqq 0$.
It is known that $\lambda_1^I \geq \lambda_2^I \geq \cdots \geq \lambda_\rr^I$ for arbitrary lower ideal $I \subset \Phi^+$ by \cite[Proposition~3.1]{SoTy}, which implies that $\mm_i^I \geq 0$ for all $1 \leq i \leq \rr$. 
Then one can easily see that the formula in \eqref{eq:PoincarePolynomialHessGBproduct} can be written as follows (cf. \cite{AHMMS}):
\begin{align} \label{eq:PoincarePolynomialHessGBHeightDistribution}
\Poin(\Hess(N,I),\sqrt{\q}) = \prod_{i=1}^\rr (1+\q+\q^2+\cdots+\q^i)^{\mm_i^I}.
\end{align}

\begin{example}
We consider a lower ideal $I$ below in type $A_9$:
\begin{align*}
I = \{&x_1-x_2, x_1-x_3, x_1-x_4, x_2-x_3, x_2-x_4, x_3-x_4, x_3-x_5, x_4-x_5, \\
&x_6-x_7, x_6-x_8, x_7-x_8, x_9-x_{10} \}.
\end{align*}
The height distribution in $I$ is $(\lambda_1^I,\lambda_2^I,\lambda_3^I)=(7,4,1)$ as shown in Figure~\ref{pic:Young diagram(7,4,1)}.
\begin{figure}[h]
\begin{center}
\begin{picture}(230,45)
\put(0,0){\framebox(40,15){$x_1-x_2$}}
\put(40,0){\framebox(40,15){$x_2-x_3$}}
\put(80,0){\framebox(40,15){$x_3-x_4$}}
\put(120,0){\framebox(40,15){$x_4-x_5$}}
\put(160,0){\framebox(40,15){$x_6-x_7$}}
\put(200,0){\framebox(40,15){$x_7-x_8$}}
\put(240,0){\framebox(42,15){$x_9-x_{10}$}}
\put(0,15){\framebox(40,15){$x_1-x_3$}}
\put(40,15){\framebox(40,15){$x_2-x_4$}}
\put(80,15){\framebox(40,15){$x_3-x_5$}}
\put(120,15){\framebox(40,15){$x_6-x_8$}}
\put(0,30){\framebox(40,15){$x_1-x_4$}}

\put(-50,4){{\footnotesize $\lambda_1^I=7$}}
\put(-50,19){{\footnotesize $\lambda_2^I=4$}}
\put(-50,34){{\footnotesize $\lambda_3^I=1$}}
\end{picture}
\end{center}
\caption{The height distribution in $I$.}
\label{pic:Young diagram(7,4,1)}
\end{figure}

Since $(\mm_1^I,\mm_2^I,\mm_3^I)=(3,3,1)$, the Poincar\'e polynomial of $\Hess(N,I)$ is 
\begin{align*}
\Poin(\Hess(N,I),\sqrt{\q}) = (1+\q)^3(1+\q+\q^2)^3(1+\q+\q^2+\q^3).
\end{align*}
\end{example}

\begin{theorem} \label{theorem:height_distributionHessTheta}
Let $N$ be a regular nilpotent element in $\g$. 
For a $\Theta$-ideal $I \subset \Phi^+$, a sequence $\big(\lambda_1^{I \setminus \Phi^+_\Theta}, \lambda_2^{I \setminus \Phi^+_\Theta}, \cdots, \lambda_\rr^{I \setminus \Phi^+_\Theta} \big)$ denotes the height distribution in ${I \setminus \Phi^+_\Theta}$ where $\rr=\max\{\height(\alpha) \mid \alpha \in I \setminus \Phi^+_\Theta \}$. 
Let $\mm_i^{I \setminus \Phi^+_\Theta} \ (1 \leq i \leq \rr)$ be an integer defined in \eqref{eq:kiY}. 
Then the Poincar\'e polynomial of the regular nilpotent partial Hessenberg variety $\Hess_\Theta(N,I)$ is equal to 
\begin{align} \label{eq:height_distributionHessTheta}
\Poin(\Hess_\Theta(N,I), \sqrt{\q}) = \prod_{i=1}^\rr (1+\q+\q^2+\cdots+\q^i)^{\mm_i^{I \setminus \Phi^+_\Theta}}.
\end{align} 
\end{theorem}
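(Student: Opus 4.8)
The plan is to derive the identity \eqref{eq:height_distributionHessTheta} directly from the product formula in Proposition~\ref{proposition:propertyHess(N,I)Theta}-(3) by a purely formal manipulation of rational functions in $\q$, in exactly the same way one passes from \eqref{eq:PoincarePolynomialHessGBproduct} to \eqref{eq:PoincarePolynomialHessGBHeightDistribution} for a full lower ideal. First I would group the factors of
\[
\Poin(\Hess_\Theta(N,I),\sqrt{\q}) = \prod_{\alpha \in I \setminus \Phi^+_\Theta} \frac{1-\q^{\height(\alpha)+1}}{1-\q^{\height(\alpha)}}
\]
according to the height of $\alpha$: since there are exactly $\lambda_i^{I \setminus \Phi^+_\Theta}$ roots of height $i$ in $I \setminus \Phi^+_\Theta$ and every such root has height between $1$ and $\rr$, this rewrites the right-hand side as $\prod_{i=1}^\rr \bigl( (1-\q^{i+1})/(1-\q^{i}) \bigr)^{\lambda_i^{I \setminus \Phi^+_\Theta}}$.

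Next I would carry out the telescoping. Abbreviating $\lambda_i = \lambda_i^{I \setminus \Phi^+_\Theta}$ and $\mm_i = \mm_i^{I \setminus \Phi^+_\Theta}$, after reindexing the numerator $\prod_{i=1}^\rr (1-\q^{i+1})^{\lambda_i}$ as $\prod_{k=2}^{\rr+1}(1-\q^k)^{\lambda_{k-1}}$ and cancelling against the denominator $\prod_{i=1}^\rr(1-\q^i)^{\lambda_i}$, one gets
\[
\prod_{i=1}^\rr \left(\frac{1-\q^{i+1}}{1-\q^{i}}\right)^{\lambda_i} = \frac{(1-\q^{\rr+1})^{\lambda_\rr}}{(1-\q)^{\lambda_1}} \prod_{k=2}^{\rr}(1-\q^k)^{\lambda_{k-1}-\lambda_k}.
\]
Using $\lambda_{k-1}-\lambda_k = \mm_{k-1}$, the convention $\lambda_{\rr+1}=0$ (so that $\lambda_\rr = \mm_\rr$), and the telescoping identity $\lambda_1 = \sum_{i=1}^\rr \mm_i$, the right-hand side collapses to $\prod_{i=1}^\rr \bigl( (1-\q^{i+1})/(1-\q) \bigr)^{\mm_i}$; since $(1-\q^{i+1})/(1-\q) = 1+\q+\cdots+\q^i$, this is precisely the product in \eqref{eq:height_distributionHessTheta}.

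There is no real obstacle here: every step is an identity in the fraction field $\Q(\q)$, and in particular it uses no monotonicity of the sequence $(\lambda_i)$. This last point is worth emphasizing, since $I \setminus \Phi^+_\Theta$ need not be a lower ideal, so that---in contrast with \eqref{eq:PoincarePolynomialHessGBHeightDistribution}---the integers $\mm_i^{I \setminus \Phi^+_\Theta}$ can be negative and the right-hand side of \eqref{eq:height_distributionHessTheta} is a priori only a rational function; it is a genuine polynomial in $\q$ precisely because it equals the left-hand side, which is one by Proposition~\ref{proposition:propertyHess(N,I)Theta}.
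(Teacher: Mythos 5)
Your proof is correct, and the algebra checks out: the telescoping identity indeed turns $\prod_{\alpha\in I\setminus\Phi^+_\Theta}\frac{1-\q^{\height(\alpha)+1}}{1-\q^{\height(\alpha)}}$ into $\prod_{i=1}^\rr(1+\q+\cdots+\q^i)^{\mm_i^{I\setminus\Phi^+_\Theta}}$, with the three ingredients $\lambda_{k-1}-\lambda_k=\mm_{k-1}$, $\lambda_\rr=\mm_\rr$, and $\lambda_1=\sum_{i=1}^\rr\mm_i$ used exactly as you state. Your route is, however, different in emphasis from the paper's. The paper does not manipulate the product over roots; instead it re-invokes the Leray--Hirsch factorization $\Poin(\Hess(N,I),\sqrt{\q})=\Poin(\Hess(N_0,\Phi^+_\Theta),\sqrt{\q})\,\Poin(\Hess_\Theta(N,I),\sqrt{\q})$, applies the already-known height-distribution formula \eqref{eq:PoincarePolynomialHessGBHeightDistribution} to \emph{each} of the two full lower ideals $I$ and $\Phi^+_\Theta$ (the latter being a lower ideal since $\Theta\subset\Delta$), and then takes the quotient, using $\lambda_i^{I\setminus\Phi^+_\Theta}=\lambda_i^I-\lambda_i^{\Phi^+_\Theta}$ and hence $\mm_i^{I\setminus\Phi^+_\Theta}=\mm_i^I-\mm_i^{\Phi^+_\Theta}$. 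Your argument starts from the established Proposition~\ref{proposition:propertyHess(N,I)Theta}-(3) and performs the telescoping directly, so it is somewhat more self-contained: it avoids both the extra invocation of \eqref{eq:PoincarePolynomialHessGBHeightDistribution} and the need to compare the distributions of $I$, $\Phi^+_\Theta$, and $I\setminus\Phi^+_\Theta$. The paper's version, on the other hand, makes visible the interpretation of \eqref{eq:height_distributionHessTheta} as a literal quotient of the two ``classical'' formulas. Both proofs are valid; yours has the mild advantage of being a one-step manipulation and of making explicit (as you rightly stress) that no monotonicity of $\lambda_i^{I\setminus\Phi^+_\Theta}$ is used, so the $\mm_i^{I\setminus\Phi^+_\Theta}$ may be negative and the product is only a posteriori a polynomial.
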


\begin{proof}
As seen in the proof of Proposition~\ref{proposition:propertyHess(N,I)Theta}-(3), we obtain
\begin{align*}
\Poin(\Hess(N,I),\sqrt{\q}) = \Poin(\Hess(N_0,\Phi^+_\Theta),\sqrt{\q}) \Poin(\Hess_\Theta(N,I),\sqrt{\q}).
\end{align*}
This with \eqref{eq:PoincarePolynomialHessGBHeightDistribution} yields that 
\begin{align} \label{eq:proof_height_distribution}
\Poin(\Hess_\Theta(N,I),\sqrt{\q}) = \frac{\prod_{i=1}^{\rr'} (1+\q+\q^2+\cdots+\q^i)^{\mm_i^I}}{\prod_{i=1}^{\rr''} (1+\q+\q^2+\cdots+\q^i)^{\mm_i^{\Phi^+_\Theta}}}
\end{align}
where $\rr' \coloneqq \max\{\height(\alpha) \mid \alpha \in I \}$ and $\rr'' \coloneqq \max\{\height(\alpha) \mid \alpha \in \Phi^+_\Theta \}$.
Note that $\rr=\max\{\height(\alpha) \mid \alpha \in I \setminus \Phi^+_\Theta \} \leq \rr'$ and $\rr'' \leq \rr'$ by the condition $I \supset \Phi^+_\Theta$. 
We set $\lambda_i^{I \setminus \Phi^+_\Theta} =0$ for $i > \rr$ and $\lambda_i^{\Phi^+_\Theta} = 0$ for $i > \rr''$. 
Then, by the definition of the height distribution, we have $\lambda_i^{I \setminus \Phi^+_\Theta} = \lambda_i^{I} - \lambda_i^{\Phi^+_\Theta}$ for $1 \leq i \leq \rr'$. 
This implies that $\mm_i^{I \setminus \Phi^+_\Theta} = \mm_i^{I} - \mm_i^{\Phi^+_\Theta}$ for $1 \leq i \leq \rr'$. 
Therefore, the right hand side of \eqref{eq:proof_height_distribution} is 
\begin{align*}
\prod_{i=1}^{\rr'} (1+\q+\q^2+\cdots+\q^i)^{\mm_i^I-\mm_i^{\Phi^+_\Theta}} = \prod_{i=1}^{\rr} (1+\q+\q^2+\cdots+\q^i)^{\mm_i^{I \setminus \Phi^+_\Theta}}
\end{align*}
where we used the equality $\mm_i^{I \setminus \Phi^+_\Theta} = \lambda_i^{I \setminus \Phi^+_\Theta} - \lambda_{i+1}^{I \setminus \Phi^+_\Theta} =0$ whenever $i >\rr$.
\end{proof}

The right hand side of \eqref{eq:height_distributionHessTheta} seems to be a rational function since $\mm_i^{I \setminus \Phi^+_\Theta}$ may be taken as a negative integer, but it is a polynomial in the variable $\q$ because the left hand side of \eqref{eq:height_distributionHessTheta} is a polynomial. 
In other words, the numerator is divisible by the denominator in the right hand side of \eqref{eq:height_distributionHessTheta}. 
We give an example below. 
In the next subsection, Section~\ref{subsection:typeAformula}, we will explicitly give a polynomial description for \eqref{eq:height_distributionHessTheta} in type $A$.

\begin{example} \label{example:h=(4,5,5,8,10)PoincarePolynomial}
We consider the Poincar\'e polynomial of $\Hess_\KK(N,h)$ of type $A_9$ for $\KK=\{2,4,5,8\}$  and $h=(4,5,5,8,10)$, which was also considered in Example~\ref{example:PoincarePolynomial_h=(4,5,5,8,10)}. 
The corresponding $\Theta$-ideal $I$ and $\Phi^+_\Theta$ are described as Example~\ref{example:h=(4,5,5,8,10)}. 
The height distribution in $I \setminus \Phi^+_\Theta$ is $(\lambda_1^{I \setminus \Phi^+_\Theta},\lambda_2^{I \setminus \Phi^+_\Theta},\lambda_3^{I \setminus \Phi^+_\Theta})=(2,3,1)$ and tuple of integers is $(\mm_1^{I \setminus \Phi^+_\Theta},\mm_2^{I \setminus \Phi^+_\Theta},\mm_3^{I \setminus \Phi^+_\Theta})=(-1,2,1)$ as shown in Figure~\ref{pic:skew Young diagram(2,3,1)}, so the Poincar\'e polynomial of $\Hess_\KK(N,h)$ is given by
\begin{align*}
\Poin(\Hess_\KK(N,h),\sqrt{\q}) = \frac{1}{1+\q}(1+\q+\q^2)^2(1+\q+\q^2+\q^3) = (1+\q+\q^2)^2(1+\q^2), 
\end{align*}
which coincides with \eqref{eq:PoincarePolynomial_h=(4,5,5,8,10)}.
\begin{figure}[h]
\begin{center}
\begin{picture}(50,45)
\put(40,0){\framebox(40,15){$x_2-x_3$}}
\put(80,0){\framebox(40,15){$x_4-x_5$}}
\put(0,15){\framebox(40,15){$x_1-x_3$}}
\put(40,15){\framebox(40,15){$x_2-x_4$}}
\put(80,15){\framebox(40,15){$x_3-x_5$}}
\put(0,30){\framebox(40,15){$x_1-x_4$}}

\put(-70,4){{\footnotesize $\lambda_1^{I \setminus \Phi^+_\Theta}=2$}}
\put(-70,19){{\footnotesize $\lambda_2^{I \setminus \Phi^+_\Theta}=3$}}
\put(-70,34){{\footnotesize $\lambda_3^{I \setminus \Phi^+_\Theta}=1$}}
\end{picture}
\end{center}
\caption{The height distribution in $I \setminus \Phi^+_\Theta$.}
\label{pic:skew Young diagram(2,3,1)}
\end{figure}
\end{example}

\begin{corollary}
For any $\Theta$-ideal $I \subset \Phi^+$, we have 
\begin{align} \label{eq:Sommers-Tymoczko_Theta} 
\sum_{Y \in \mathcal{W}^{I, \Theta}} \q^{|Y|} = \prod_{i=1}^\rr (1+\q+\q^2+\cdots+\q^i)^{\mm_i^{I \setminus \Phi^+_\Theta}}, 
\end{align} 
where $\mathcal{W}^{I, \Theta}$ is defined in \eqref{eq:WeylTypeTheta} and $\mm_i^{I \setminus \Phi^+_\Theta}$ is defined in \eqref{eq:kiY} for $1 \leq i \leq \rr = \max\{\height(\alpha) \mid \alpha \in I \setminus \Phi^+_\Theta \}$. 
\end{corollary}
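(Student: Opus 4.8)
The plan is to read off the identity \eqref{eq:Sommers-Tymoczko_Theta} by comparing the two formulas for the Poincar\'e polynomial of $\Hess_\Theta(N,I)$ established above, in direct analogy with the way \eqref{eq:PoincarePolynomialHessGBWeylType} and \eqref{eq:PoincarePolynomialHessGBHeightDistribution} combine to yield the Sommers--Tymoczko identity in the special case $\Theta=\emptyset$.

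Concretely, I would proceed in three short steps. First, fix a regular nilpotent element $N \in \g$ and apply Theorem~\ref{theorem:PoincarePolynomialHessGPWeylType}, which identifies $\Poin(\Hess_\Theta(N,I),\sqrt{\q})$ with the Weyl-type generating function $\sum_{Y \in \mathcal{W}^{I,\Theta}} \q^{|Y|}$, that is, with the left-hand side of \eqref{eq:Sommers-Tymoczko_Theta}. Second, apply Theorem~\ref{theorem:height_distributionHessTheta}, which identifies the same Poincar\'e polynomial with $\prod_{i=1}^\rr (1+\q+\q^2+\cdots+\q^i)^{\mm_i^{I \setminus \Phi^+_\Theta}}$, that is, with the right-hand side of \eqref{eq:Sommers-Tymoczko_Theta}. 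Third, equate the two expressions; since both compute the Poincar\'e polynomial of one and the same variety $\Hess_\Theta(N,I)$ (which is independent of the choice of $N$ up to isomorphism by \eqref{eq:Hess(N0,H)Theta}), they must coincide, and this is precisely \eqref{eq:Sommers-Tymoczko_Theta}.

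I do not expect a genuine obstacle here: all of the content resides in the two input theorems --- the affine paving together with the bijection of Proposition~\ref{proposition:one-to-oneWeylTypeTheta} on the summation side, and the fiber-bundle and Leray--Hirsch argument of Proposition~\ref{proposition:propertyHess(N,I)Theta} together with the $\Theta=\emptyset$ product formula on the product side --- so the corollary is a formal consequence of what precedes it. The single point I would take care to emphasize, as in the remark after Theorem~\ref{theorem:height_distributionHessTheta}, is that the right-hand side of \eqref{eq:Sommers-Tymoczko_Theta} is a priori only a rational function, because the exponents $\mm_i^{I \setminus \Phi^+_\Theta}$ may be negative; the identity shows a posteriori that it equals the honest polynomial $\sum_{Y \in \mathcal{W}^{I,\Theta}} \q^{|Y|}$, and that it specializes to the Sommers--Tymoczko identity when $\Theta=\emptyset$.
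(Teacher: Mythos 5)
Your proposal is correct and is exactly the paper's own proof: both formulas compute $\Poin(\Hess_\Theta(N,I),\sqrt{\q})$, one via Theorem~\ref{theorem:PoincarePolynomialHessGPWeylType} and the other via Theorem~\ref{theorem:height_distributionHessTheta}, so equating them gives \eqref{eq:Sommers-Tymoczko_Theta}. Your extra remarks on independence of $N$ and on the rational-vs.-polynomial point are accurate but not needed for the argument.
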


\begin{proof}
Combining Theorem~\ref{theorem:PoincarePolynomialHessGPWeylType} and Theorem~\ref{theorem:height_distributionHessTheta}, we obtain the desired equality. 
\end{proof}

\begin{remark}
Originally, the equality in \eqref{eq:Sommers-Tymoczko_Theta} for $\Theta = \emptyset$ was conjectured by Sommers and Tymoczko in \cite{SoTy} and they proved it for types $A,B,C, F_4, E_6$, and $G_2$  (\cite[Theorem~4.1]{SoTy}).
Also, Schauenburg confirmed \eqref{eq:Sommers-Tymoczko_Theta} in the case of $\Theta = \emptyset$ for types $D_5, D_6, D_7$, and $E_7$ by direct computation, and R\"{o}hrle obtained the result for type $D_4$ and $E_8$ (\cite[Theorem~1.28 and surrounding discussion]{Roh}).
For arbitrary Lie types, the equality \eqref{eq:Sommers-Tymoczko_Theta} for $\Theta = \emptyset$ was completely proved in \cite[Corollary~1.3]{AHMMS} by a classification-free argument.
\end{remark}

\subsection{Type $A$ formula} \label{subsection:typeAformula}

Let $N$ be a regular nilpotent matrix and $h: [n] \rightarrow [n]$ a Hessenberg function explained in Section~\ref{subsection:Type A description}.
Then the Poincar\'e polynomial for regular nilpotent Hessenberg varieties $\Hess(N,h)$ in type $A_{n-1}$ is described as 
\begin{align} \label{eq:PoincarePolynomialHess(N,h)}
\Poin(\Hess(N,h), \sqrt{\q}) = \prod_{j=1}^n (1+\q+\q^2+\cdots+\q^{h(j)-j}),
\end{align}
which immediately follows from \eqref{eq:PoincarePolynomialHessGBHeightDistribution}.
The aim of this section is to generalize \eqref{eq:PoincarePolynomialHess(N,h)} to the formula for regular nilpotent partial Hessenberg varieties $\Hess_\KK(N,h)$.

For $n \geq 1$, we define
\begin{align*}
[n]_\q \coloneqq 1+\q+\q^2+\cdots+\q^{n-1} \ \ \ \textrm{and} \ \ \ [n]_\q! \coloneqq \prod_{i=1}^n [i]_\q.
\end{align*}
Here, we take the convention $[0]_\q!=1$.
For $n \geq k \geq 0$, a \emph{$\q$-binomial coefficient} is defined as 
\begin{align*}
\begin{bmatrix}
\, n \, \\
\, k \, \\
\end{bmatrix}_\q \coloneqq \frac{[n]_\q!}{[k]_\q! [n-k]_\q!}. 
\end{align*}
Since $\q$-binomial coefficients satisfy the following recursive formula 
\begin{align*}
\begin{bmatrix}
\, n \, \\
\, k \, \\
\end{bmatrix}_\q 
= 
\begin{bmatrix}
\, n-1 \, \\
\, k \, \\
\end{bmatrix}_\q
+ 
\q^{n-k}\begin{bmatrix}
\, n-1 \, \\
\, k-1 \, \\
\end{bmatrix}_\q, 
\end{align*}
every $\q$-binomial coefficient $\begin{bmatrix}
\, n \, \\
\, k \, \\
\end{bmatrix}_\q $ is a polynomial in the variable $\q$ by induction on $n$. 
By using $\q$-binomial coefficients, we can describe the Poincar\'e polynomial of the regular nilpotent partial Hessenberg varieties in type $A$. 

\begin{theorem} \label{theorem:Poincare_polynomial_HessK(N,h)typeA}
Let $\KK = \{\kk_1,\ldots,\kk_s \}$ with $1 \leq \kk_1 < \cdots < \kk_s \leq n-1$. 
Let $N$ be a regular nilpotent matrix and $h: \KK \cup \{n \} \rightarrow \KK \cup \{n \}$ a partial Hessenberg function.
Then the Poincar\'e polynomial of the associated regular nilpotent partial Hessenberg variety $\Hess_\KK(N,h)$ in \eqref{eq:Hess(N,h)KKTypeA} is described as
\begin{align*}
\Poin(\Hess_\KK(N,h),\sqrt{\q}) = \prod_{j=1}^{s+1} \begin{bmatrix}
\, h(\kk_j)-\kk_{j-1} \, \\
\, \kk_j-\kk_{j-1} \, \\
\end{bmatrix}_\q
\end{align*}
where we take the convention that $\kk_0=0$ and $\kk_{s+1}=n$. 
\end{theorem}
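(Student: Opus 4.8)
The plan is to evaluate the product formula of Proposition~\ref{proposition:propertyHess(N,I)Theta}-(3) directly in type $A$. Let $I$ be the $\Theta$-ideal with $H_I=H(h)$, so that $\Hess_\KK(N,h)\cong\Hess_\Theta(N,I)$ and
\[
\Poin(\Hess_\KK(N,h),\sqrt{\q})=\prod_{\alpha\in I\setminus\Phi^+_\Theta}\frac{1-\q^{\height(\alpha)+1}}{1-\q^{\height(\alpha)}},
\]
and put $\kk_0=0$, $\kk_{s+1}=n$, calling $\{\kk_{p-1}+1,\dots,\kk_p\}$ the \emph{$p$-th block}. The first task is to describe $I\setminus\Phi^+_\Theta$ in these terms. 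Since $\Phi^+=\{x_i-x_j\mid i<j\}$ with $\height(x_i-x_j)=j-i$ and $\Theta=\{\alpha_m\mid m\in[n-1]\setminus\KK\}$, one checks that $x_i-x_j\in\Phi^+_\Theta$ precisely when $i$ and $j$ lie in the same block, and that, for $i$ in the $p$-th block, $x_i-x_j\in I$ precisely when $j\le\tilde h(i)=h(\kk_p)$ by \eqref{eq:tilde h}. Hence, for $i$ in the $p$-th block, the roots of $I\setminus\Phi^+_\Theta$ with first index $i$ are exactly the $x_i-x_j$ with $\kk_p<j\le h(\kk_p)$, of heights $\kk_p-i+1,\kk_p-i+2,\dots,h(\kk_p)-i$.

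Next I would group the product over $\alpha\in I\setminus\Phi^+_\Theta$ according to the first index $i$ and telescope: the $i$-th factor is
\[
\prod_{m=\kk_p-i+1}^{h(\kk_p)-i}\frac{1-\q^{m+1}}{1-\q^m}=\frac{1-\q^{h(\kk_p)-i+1}}{1-\q^{\kk_p-i+1}},
\]
the empty product $1$ when $h(\kk_p)=\kk_p$ (this also disposes of the last block, where $h(\kk_{s+1})=h(n)=n=\kk_{s+1}$). Multiplying over the columns $i=\kk_{p-1}+1,\dots,\kk_p$ of the $p$-th block and reindexing the numerator by $m=h(\kk_p)-i+1$ and the denominator by $m=\kk_p-i+1$ gives that block's contribution as
\[
\prod_{i=\kk_{p-1}+1}^{\kk_p}\frac{1-\q^{h(\kk_p)-i+1}}{1-\q^{\kk_p-i+1}}=\frac{\prod_{m=h(\kk_p)-\kk_p+1}^{h(\kk_p)-\kk_{p-1}}(1-\q^m)}{\prod_{m=1}^{\kk_p-\kk_{p-1}}(1-\q^m)}.
\]
Using $[m]_\q=(1-\q^m)/(1-\q)$ and observing that numerator and denominator on the right each have $\kk_p-\kk_{p-1}$ factors, the powers of $1-\q$ cancel and this equals $\dfrac{[h(\kk_p)-\kk_{p-1}]_\q!}{[h(\kk_p)-\kk_p]_\q!\,[\kk_p-\kk_{p-1}]_\q!}=\begin{bmatrix}\,h(\kk_p)-\kk_{p-1}\,\\ \,\kk_p-\kk_{p-1}\,\\ \end{bmatrix}_\q$, since $(h(\kk_p)-\kk_{p-1})-(\kk_p-\kk_{p-1})=h(\kk_p)-\kk_p$. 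Taking the product over $p=1,\dots,s+1$ yields the asserted formula.

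The whole argument is a routine manipulation; the only point needing care is the description of $\Phi^+_\Theta$ (and hence of $I\setminus\Phi^+_\Theta$) in terms of $\KK$ and $\tilde h$ — namely that the positive roots internal to a single block are exactly those lost in passing from $G/B$ to $G/P$ — together with keeping the two-stage (column, then block) reindexing and the degenerate blocks $h(\kk_p)=\kk_p$ straight. An equivalent route would be to iterate the fiber bundle of Lemma~\ref{lemma:fiber bundle}: since $P/B\cong\prod_{p=1}^{s+1}\Flag(\C^{\kk_p-\kk_{p-1}})$ one has $\Poin(P/B,\sqrt{\q})=\prod_{p}[\kk_p-\kk_{p-1}]_\q!$, and dividing the formula \eqref{eq:PoincarePolynomialHess(N,h)} applied to $\tilde h$ (after grouping the indices $j\in[n]$ into blocks) by this produces the same product of $\q$-binomials.
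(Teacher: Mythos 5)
Your primary argument is correct and differs in packaging from the paper's. The paper proves the formula by dividing: it applies the type-$A$ product formula \eqref{eq:PoincarePolynomialHess(N,h)} to both $\tilde h$ and to the ``trivial'' extended function $\widetilde{h_\KK}$ (where $h_\KK(\kk_j)=\kk_j$), uses the Leray--Hirsch factorization $\Poin(\Hess(N,\tilde h))=\Poin(\Hess(N_0,\widetilde{h_\KK}))\,\Poin(\Hess_\KK(N,h))$ from the proof of Proposition~\ref{proposition:propertyHess(N,I)Theta}(3), and simplifies the resulting ratio block by block. You instead invoke Proposition~\ref{proposition:propertyHess(N,I)Theta}(3) directly, explicitly identify the roots in $I\setminus\Phi^+_\Theta$ in type $A$ (your description of $\Phi^+_\Theta$ as the roots internal to a block, and of $I$ via $j\le\tilde h(i)$, is exactly right), and telescope the root-theoretic product first over each column $i$ and then over each block. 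This is computationally the same cancellation the paper performs, but you start one step later in the logical chain, so you avoid re-deriving the fiber-bundle factorization in the type-$A$ case. Your remark at the end correctly identifies the paper's actual route as the ``equivalent'' alternative. Both arguments are routine once the combinatorial description of $I\setminus\Phi^+_\Theta$ is in hand; yours is slightly more self-contained in that it makes the type-$A$ root combinatorics explicit, whereas the paper's is slightly shorter because it leans on the already-stated formula \eqref{eq:PoincarePolynomialHess(N,h)}.
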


\begin{proof}
Let $\tilde{h}: [n] \to [n]$ be the extended Hessenberg function in \eqref{eq:tilde h} for a partial Hessenberg function $h: \KK \cup \{n \} \rightarrow \KK \cup \{n \}$.
We define a partial Hessenberg function $h_\KK:\KK \cup \{n \} \rightarrow \KK \cup \{n \}$ by $h_\KK(\kk_j)=\kk_j$ for $1 \leq j \leq s+1$. 
Then by the discussion in the proof of Proposition~\ref{proposition:propertyHess(N,I)Theta}-(3), we have
\begin{align*}
\Poin(\Hess(N,\tilde{h}),\sqrt{\q}) = \Poin(\Hess(N_0,\widetilde{h_\KK}),\sqrt{\q}) \Poin(\Hess_\KK(N,h),\sqrt{\q}).
\end{align*}
Combining this with the formula in \eqref{eq:PoincarePolynomialHess(N,h)}, we conclude that 
\begin{align*}
\Poin(\Hess_\KK(N,h),\sqrt{\q}) =& \prod_{i=1}^n \frac{1+\q+\q^2+\cdots+\q^{\tilde{h}(i)-i}}{1+\q+\q^2+\cdots+\q^{\widetilde{h_\KK}(i)-i}} \\
=& \prod_{j=1}^{s+1} \left( \prod_{i=\kk_{j-1}+1}^{\kk_{j}} \frac{1+\q+\q^2+\cdots+\q^{\tilde{h}(i)-i}}{1+\q+\q^2+\cdots+\q^{\widetilde{h_\KK}(i)-i}} \right) \\
=& \prod_{j=1}^{s+1} \frac{[h(\kk_j)-\kk_{j-1}]_\q [h(\kk_j)-\kk_{j-1}-1]_\q \cdots [h(\kk_j)-\kk_{j}+1]_\q}{[\kk_j-\kk_{j-1}]_\q!} \\
=& \prod_{j=1}^{s+1} \begin{bmatrix}
\, h(\kk_j)-\kk_{j-1} \, \\
\, \kk_j-\kk_{j-1} \, \\
\end{bmatrix}_\q,
\end{align*}
as desired.
\end{proof}

\begin{example}
Consider the case of type $A_9$ and take $\KK=\{2,4,5,8\}$ and $h=(4,5,5,8,10)$, which is discussed in Example~\ref{example:h=(4,5,5,8,10)PoincarePolynomial}.
By Theorem~\ref{theorem:Poincare_polynomial_HessK(N,h)typeA} we have 
\begin{align*}
\Poin(\Hess_\KK(N,h),\sqrt{\q}) &= \begin{bmatrix}
\, 4 \, \\
\, 2 \, \\
\end{bmatrix}_\q \cdot \begin{bmatrix}
\, 3 \, \\
\, 2 \, \\
\end{bmatrix}_\q \cdot \begin{bmatrix}
\, 1 \, \\
\, 1 \, \\
\end{bmatrix}_\q \cdot \begin{bmatrix}
\, 3 \, \\
\, 3 \, \\
\end{bmatrix}_\q \cdot \begin{bmatrix}
\, 2 \, \\
\, 2 \, \\
\end{bmatrix}_\q = \frac{[4]_\q[3]_\q}{[2]_\q[1]_\q} \cdot [3]_\q \\ 
&= \frac{(1+\q+\q^2+\q^3)(1+\q+\q^2)}{1+\q}(1+\q+\q^2) = (1+\q^2)(1+\q+\q^2)^2. 
\end{align*}
\end{example}

\bigskip

\section{Cohomology ring $H^*(\Hess_\Theta(N,I))$} \label{sect:cohomology}

It is known that the cohomology ring of $G/P$ is isomorphic to the invariants in the cohomology ring of $G/B$ under the action of $W_\Theta$ by \cite{BGG}.
In this section we generalize this fact to regular nilpotent partial Hessenberg varieties.

We first explain Borel's work in \cite{Bor53}. 
We also refer the reader to \cite{BGG}. 
In what follows, we may identify the character group $\Hom(T,\C^*)$ of $T$ with a lattice $\t^*_{\Z}$ through differential at the identity element of $T$. 
We define $\t^*_{\Q} \coloneqq \t^*_{\Z} \otimes_{\Z} \Q$ and its symmetric algebra is denoted by
\begin{align*}
\RR \coloneqq \Sym \t^*_{\Q}.
\end{align*}
The Weyl group $W$ acts on $\t^*_\Q$ by the formula 
\begin{align} \label{eq:reflection}
s_\beta(\alpha) = \alpha - \frac{2(\alpha,\beta)}{(\beta,\beta)}\beta \ \ \ \textrm{for} \ \alpha, \beta \in \Phi 
\end{align}
where $( \ , \ )$ is the $W$-invariant nondegenerate positive-definite bilinear symmetric form on $\t^*_\Q$ transferred from the Killing form of $\g$ restricted to $\t$.
We also recall that $\frac{2(\alpha,\beta)}{(\beta,\beta)}$ is an integer for $\alpha, \beta \in \Phi $.
The $W$-action on $\t^*_{\Q}$ naturally extends a $W$-action on $\RR$. 
Since $B$ is a semidirect product $T \ltimes U$, each $\alpha \in \Hom(T,\C^*)$ extends to a homomorphism $\tilde\alpha: B \rightarrow \C^*$.
In fact, for any $b \in B$, there are unique $t \in T$ and $u \in U$ such that $b=tu$.
Then we define $\tilde\alpha(b) = \alpha(t)$. 
It is easy to check that $\tilde\alpha$ is a homomorphism. 
We denote by $\C_{\tilde \alpha}$ the one-dimensional $B$-module via $\tilde\alpha: B \rightarrow \C^*$.  
We define the complex line bundle $L_\alpha \coloneqq G \times_B \C_{\tilde \alpha}$ over the flag variety $G/B$ where $G \times_B \C_{\tilde \alpha}$ is the quotient of the direct product $G \times \C_{\tilde \alpha}$ by the left $B$-action given by $b \cdot (g,z) = (gb^{-1},\tilde{\alpha}(b) z)$ for $b \in B$ and $(g,z) \in G \times \C_{\tilde \alpha}$. 
To each $\alpha \in \t^*_\Z$, we assign the first Chern class $c_1(L_{\alpha}^*)$ of the dual line bundle $L_{\alpha}^*$. 
This yields the following ring homomorphism
\begin{align} \label{eq:homomorphism_flag}
\varphi: \RR \rightarrow H^*(G/B); \ \ \ \alpha \mapsto c_1(L_{\alpha}^*) 
\end{align} 
which doubles the grading on $\RR$.
Recall that the Weyl group $W$ acts on $H^*(G/B)$. 
In fact, let $K \subset G$ be a maximal compact subgroup such that $T_K \coloneqq K \cap T$ is a maximal torus in $K$. Then the natural mapping $K/T_K \rightarrow G/B$ gives a homeomorphism. 
It is also known that the Weyl group $W = N_G(T)/T$ is isomorphic to $N_K(T_K)/T_K$. 
Since $N_K(T_K)/T_K$ acts on $K/T_K$ by $(gT_K) \cdot (zT_K) = gzT_K$ for $gT_K \in K/T_K$ and $zT_K \in N_K(T_K)/T_K$, we obtain an $W$-action on $G/B$ via the homeomorphism $G/B \approx K/T_K$ and the isomorphism $W \cong N_K(T_K)/T_K$, which induces the $W$-action on $H^*(G/B)$.
As is well-known, the map $\varphi$ in \eqref{eq:homomorphism_flag} is a surjective $W$-equivariant map and its kernel is the ideal $(\RR^W_+)$ generated by the $W$-invariants in $\RR$ with zero constant term by Borel's theorem (\cite{Bor53}). 
In particular, this induces the following isomorphism of graded $\Q$-algebras
\begin{align*} 
H^*(G/B) \cong \RR/(\RR^W_+).  
\end{align*} 

\begin{remark} \label{remark:Weyl_group}
In order to see a $W$-action on $H^*(G/B)$ above, we used an isomorphism $W = N_G(T)/T \cong N_K(T_K)/T_K$. 
A correspondence of the isomorphism $N_K(T_K)/T_K \cong N_G(T)/T$ is given by $zT_K \mapsto zT$. 
In fact, it is clear that $(K/T_K)^{T_K} = N_K(T_K)/T_K$. 
On the other hand, one can see that $W=N_G(T)/T$ acts freely and transitively on $(G/B)^T$ (e.g. \cite[Section~24.1]{Hum75}), so
we have the identification  $(G/B)^T \cong N_G(T)/T$ which sends $zB$ to $zT$. 
Since the homeomorphism $K/T_K \approx G/B$ is a $T_K$-equivariant map, this induces $(K/T_K)^{T_K} \cong (G/B)^{T_K}$. 
Here, we note that $(G/B)^{T_K} = (G/B)^T$ since $(G/B)^{T_K} \supset (G/B)^T$ and they have the same cardinality $|W|$. 
Hence, we obtain the isomorphism $N_K(T_K)/T_K = (K/T_K)^{T_K} \cong (G/B)^{T_K} = (G/B)^T \cong N_G(T)/T$ which sends $zT_K$ to $zT$.
\end{remark}

The following theorem is well-known.

\begin{theorem} $($\cite[Corollary~5.4 and Theorem~5.5]{BGG}$)$ \label{theorem:BGG}
The homomorphism $\pi^*: H^*(G/P) \rightarrow H^*(G/B)$ induced from the natural projection
$\pi: G/B \to G/P$ is injective and its image coincides with $H^*(G/B)^{W_\Theta}$, which is the invariants in $H^*(G/B)$ under the action of $W_\Theta$. 
In particular, $\pi^*: H^*(G/P) \hookrightarrow H^*(G/B)$ induces the isomorphism
\begin{align} \label{eq:BGG}
H^*(G/P) \cong H^*(G/B)^{W_\Theta}
\end{align} 
as graded $\Q$-algebras. 
\end{theorem}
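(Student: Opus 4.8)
This statement is classical (it is the cited result of Bernstein--Gelfand--Gelfand); the plan is to recover it from inputs already recalled above, namely the Schubert-cell decomposition $G/P=\bigsqcup_{w\in W^\Theta}BwP/P$, the fiber bundle $\pi\colon G/B\to G/P$ with fiber $P/B$, and the Borel presentation $H^*(G/B)\cong\RR/(\RR^W_+)$. The strategy has three steps: (i) deduce injectivity of $\pi^*$ from the Leray--Hirsch theorem; (ii) observe that $\pi$ is constant on $W_\Theta$-orbits, so that $\mathrm{im}(\pi^*)\subseteq H^*(G/B)^{W_\Theta}$; (iii) show that the source and target of the resulting injection have the same total $\Q$-dimension, which forces equality in each degree.

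For (i), recall that $P/B$ is paved by the Schubert cells $BwB/B$ with $w\in W_\Theta$ (this uses $P=BW_\Theta B$, cf.\ Lemma~\ref{lemma:P/B}), so $H^*(P/B)$ is concentrated in even degrees with additive basis the Schubert classes indexed by $W_\Theta$. The inclusion of the fiber over the base point is the closed embedding $P/B\hookrightarrow G/B$, and the Schubert classes $\sigma_w\in H^*(G/B)$ with $w\in W_\Theta$ restrict to this basis of $H^*(P/B)$. Hence the Leray--Hirsch theorem applies: the map $H^*(G/P)\otimes_\Q H^*(P/B)\to H^*(G/B)$, $a\otimes b\mapsto\pi^*(a)\cdot\tilde b$ (with $\tilde b$ a chosen lift of $b$), is an isomorphism of $H^*(G/P)$-modules. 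In particular $\pi^*$ is injective, and comparing Poincaré polynomials recovers $\Poin(G/B,\sqrt{\q})=\Poin(P/B,\sqrt{\q})\,\Poin(G/P,\sqrt{\q})$.

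For (ii), note that for $w\in W_\Theta$ any representative of $w$ in $N_G(T)$ lies in $P=BW_\Theta B$; consequently the self-map $\rho_w$ of $G/B$ given by the $W_\Theta$-action preserves every fiber $gP/B$ of $\pi$, i.e.\ $\pi\circ\rho_w=\pi$. Passing to cohomology yields $\rho_w^*\circ\pi^*=\pi^*$ for all $w\in W_\Theta$, so every class in $\mathrm{im}(\pi^*)$ is fixed by $W_\Theta$; thus $\pi^*$ factors as an injective homomorphism of graded $\Q$-algebras $H^*(G/P)\hookrightarrow H^*(G/B)^{W_\Theta}$.

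For (iii), the cell decomposition of $G/P$ gives $\dim_\Q H^*(G/P)=|W^\Theta|=|W|/|W_\Theta|$. On the other side, Chevalley's theorem identifies the coinvariant algebra $\RR/(\RR^W_+)\cong H^*(G/B)$ with the regular representation $\Q[W]$ as a $W$-module, so by Frobenius reciprocity $\dim_\Q H^*(G/B)^{W_\Theta}=\langle\mathrm{Res}_{W_\Theta}\Q[W],\mathbf 1\rangle_{W_\Theta}=|W|/|W_\Theta|$. Since an injective graded map between graded vector spaces of equal finite total dimension is an isomorphism, $\pi^*\colon H^*(G/P)\to H^*(G/B)^{W_\Theta}$ is an isomorphism of graded $\Q$-algebras. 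I expect the only genuine content to be step (iii): injectivity and the containment $\mathrm{im}(\pi^*)\subseteq H^*(G/B)^{W_\Theta}$ are soft, whereas the equality $\dim_\Q H^*(G/B)^{W_\Theta}=|W^\Theta|$ rests on Chevalley's regular-representation theorem. Alternatively one can bypass the character count: the averaging operator $\tfrac1{|W_\Theta|}\sum_{w\in W_\Theta}w$ shows directly that the image of $\RR^{W_\Theta}$ in $\RR/(\RR^W_+)$ equals $H^*(G/B)^{W_\Theta}$, reducing matters to identifying $H^*(G/P)$ with $\RR^{W_\Theta}/(\RR^{W_\Theta}\cap(\RR^W_+))$ and computing its Poincaré series via the graded invariant theory of $W$ and $W_\Theta$.
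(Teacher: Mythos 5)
The paper does not prove this theorem; it is imported wholesale from Bernstein--Gelfand--Gelfand (Corollary~5.4 and Theorem~5.5 of \cite{BGG}), so there is no internal proof to compare against. Your argument is a correct, self-contained derivation of the cited fact, and it is essentially the textbook proof. Step~(i) is sound: $P/B$ is the Schubert variety $X_{w_0^\Theta}$, the restriction $H^*(G/B)\to H^*(P/B)$ is surjective (the paper itself invokes this for the surjectivity of $\iota^*$ in the proof of Proposition~\ref{proposition:propertyHess(N,I)Theta}), so Leray--Hirsch applies and $\pi^*$ is injective. Step~(ii) is the same observation the paper uses in Lemma~\ref{lemma:WTheta-action on X(x,H)}: for $w\in W_\Theta$, any representative $z$ of $w$ in $N_G(T)$ lies in $zB\subset BwB\subset BW_\Theta B=P$, so the self-map of $G/B$ (defined via the $K/T_K$ model) satisfies $\pi\circ\rho_w=\pi$, forcing $\mathrm{im}(\pi^*)\subseteq H^*(G/B)^{W_\Theta}$. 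Step~(iii) is the only step with real content, and your identification of $H^*(G/B)$ with the regular $W$-representation via Chevalley's theorem, followed by $\dim_\Q H^*(G/B)^{W_\Theta}=|W|/|W_\Theta|=|W^\Theta|=\dim_\Q H^*(G/P)$, closes the argument correctly.

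Two minor remarks. First, in step~(i) you do not actually need the finer claim that the Schubert classes $\sigma^w$, $w\in W_\Theta$, restrict exactly to the Schubert basis of $H^*(P/B)$; surjectivity of the restriction, which is all Leray--Hirsch requires, already follows from the compatible affine pavings of $G/B$ and of the Schubert variety $X_{w_0^\Theta}$. Second, your alternative sketch at the end (averaging over $W_\Theta$ to show that $\RR^{W_\Theta}$ surjects onto $H^*(G/B)^{W_\Theta}$, then comparing graded invariant rings) is in fact closer in spirit to the route the present paper later takes when it upgrades this statement to regular nilpotent partial Hessenberg varieties in the proof of Theorem~\ref{theorem:main}, where the averaging trick is used explicitly.
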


\begin{lemma} \label{lemma:WTheta-action on X(x,H)}
For $x \in \g$ and a $\p$-Hessenberg space $H \subset \g$, we set
\begin{align*}
X(x,H) \coloneqq \{gT_K \in K/T_K \mid \Ad(g^{-1})(x) \in H \}.
\end{align*}
Note that $\Ad$ means the restriction of the adjoint map $\Ad: G \to \Aut(\g)$ to $K$. 
Then the $W_\Theta$-action on $K/T_K$ preserves $X(x,H)$. 
In particular, since the natural mapping $X(x,H) \rightarrow \Hess(x,H)$ is a homeomorphism, the $W_\Theta$-action on $G/B$ preserves $\Hess(x,H)$.
In other words, the inclusion map $\Hess(x,H) \subset G/B$ is a $W_\Theta$-equivariant map.
\end{lemma}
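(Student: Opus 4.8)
The plan is to reduce the statement to a combinatorial identity about the $\Theta$-ideal attached to $H$. By Lemma~\ref{lemma:one-to-one partial} we may write $H=H_I$ for a unique $\Theta$-ideal $I\subset\Phi^+$, with $H_I$ as in \eqref{eq:HI}. Recall from Remark~\ref{remark:Weyl_group} that an element $w\in W$ is represented by some $z\in N_K(T_K)\subset N_G(T)$, and that the $W$-action on $K/T_K$ is $(gT_K)\cdot(zT_K)=gzT_K$. For $gT_K\in X(x,H_I)$ and $w\in W_\Theta$ with representative $z$ one computes
\begin{align*}
\Ad\big((gz)^{-1}\big)(x)=\Ad(z^{-1})\big(\Ad(g^{-1})(x)\big),
\end{align*}
so it suffices to prove $\Ad(z^{-1})(H_I)=H_I$ for every $w\in W_\Theta$. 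Since $\Ad(z)$ is a linear automorphism of $\g$ this is equivalent to $\Ad(z)(H_I)=H_I$; and since $H_I$ is a $\b$-submodule of $\g$, hence $T$-stable, the subspace $\Ad(z)(H_I)$ depends only on $w$, not on the chosen representative. I am thus reduced to proving $\Ad(\dot w)(H_I)=H_I$ for all $w\in W_\Theta$, where $\dot w\in N_G(T)$ is any representative.

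Next I would pass to the weight decomposition. From $\b=\t\oplus\bigoplus_{\gamma\in\Phi^+}\g_\gamma$ and $I\subset\Phi^+$ we get $H_I=\t\oplus\bigoplus_{\mu\in\Phi^+\cup(-I)}\g_\mu$. Since $\Ad(\dot w)$ preserves $\t$ and sends $\g_\mu$ to $\g_{w(\mu)}$, applying it gives $\Ad(\dot w)(H_I)=\t\oplus\bigoplus_{\nu\in w(\Phi^+\cup(-I))}\g_\nu$, so by uniqueness of the root space decomposition $\Ad(\dot w)(H_I)=H_I$ is equivalent to $w(\Phi^+\cup(-I))=\Phi^+\cup(-I)$. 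As $\Phi$ is the disjoint union of $\Phi^+\cup(-I)$ and $-(\Phi^+\setminus I)$, and as $w$ commutes with $\mu\mapsto-\mu$ on $\Phi$, this is in turn equivalent to
\begin{align*}
w(\Phi^+\setminus I)=\Phi^+\setminus I .
\end{align*}
So the whole lemma comes down to this identity for every $w\in W_\Theta$.

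The heart of the argument is establishing that identity, and I expect it to be the only genuine obstacle. Since $W_\Theta$ is generated by the $s_i$ with $\alpha_i\in\Theta$ and each $s_i$ is an involution, it is enough to show $s_i(\Phi^+\setminus I)\subset\Phi^+\setminus I$ for each $\alpha_i\in\Theta$. Fix $\gamma\in\Phi^+\setminus I$. Because $\alpha_i\in\Theta\subset\Phi^+_\Theta\subset I$ we have $\gamma\neq\alpha_i$, so $s_i(\gamma)\in\Phi^+$ by the standard fact that $s_i$ permutes $\Phi^+\setminus\{\alpha_i\}$ (see e.g. \cite{Hum90}). Assume for contradiction that $\delta:=s_i(\gamma)\in I$; then $\delta\in\Phi^+$ and $\delta\neq\alpha_i$ (otherwise $\gamma=s_i(\alpha_i)=-\alpha_i\notin\Phi^+$), so $s_i(\delta)\in\Phi^+$ as well. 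Writing $c:=\frac{2(\delta,\alpha_i)}{(\alpha_i,\alpha_i)}\in\Z$, so $s_i(\delta)=\delta-c\alpha_i$ by \eqref{eq:reflection}, I claim $s_i(\delta)\in I$, which contradicts $\gamma=s_i(\delta)\notin I$. If $c\le 0$, then $s_i(\delta)=\delta+|c|\alpha_i$, and either $s_i(\delta)=\delta\in I$ (when $c=0$) or $\delta\preceq_\Theta s_i(\delta)$ because $s_i(\delta)-\delta$ is a sum of copies of $\alpha_i\in\Phi^+_\Theta$; as $s_i(\delta)\in\Phi^+$, condition \eqref{eq:Theta upper ideal} gives $s_i(\delta)\in I$. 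If $c>0$, then $s_i(\delta)\preceq\delta$ because $\delta-s_i(\delta)=c\alpha_i$ is a sum of copies of the positive root $\alpha_i$; as $s_i(\delta)\in\Phi^+$, condition \eqref{eq:lower ideal} gives $s_i(\delta)\in I$. Either way we reach the contradiction, hence $s_i(\gamma)\notin I$, and so $w(\Phi^+\setminus I)=\Phi^+\setminus I$ for all $w\in W_\Theta$.

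Finally I would assemble the pieces: the previous steps give $\Ad(\dot w)(H_I)=H_I$ for every representative of every $w\in W_\Theta$, so $X(x,H)$ is stable under the $W_\Theta$-action on $K/T_K$. Since the $W$-action on $G/B$ is, by construction, transported from the one on $K/T_K$ through the homeomorphism $K/T_K\approx G/B$, and since this homeomorphism carries $X(x,H)$ onto $\Hess(x,H)$, the variety $\Hess(x,H)$ is $W_\Theta$-stable; equivalently the inclusion $\Hess(x,H)\subset G/B$ is $W_\Theta$-equivariant, as claimed. Everything outside the third paragraph is bookkeeping with the weight decomposition and with the definition of the $W$-action. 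If one prefers to avoid the sign case analysis, an alternative route for that step is to note that any $\p$-submodule of $\g$ is automatically stable under the adjoint action of the connected group $P$, that $W_\Theta$ has representatives in $\Levi_\Theta\subset P$, and that $\Ad(\dot w)(H)$ is independent of the chosen representative in $N_G(T)$ (as $H$ is $T$-stable); but the combinatorial argument fits the spirit of the paper better.
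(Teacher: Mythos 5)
Your proof is correct, but it takes a genuinely different route from the paper's. The paper's argument is a short Lie-theoretic shortcut: a representative $z\in N_K(T_K)$ of $w\in W_\Theta$ lies in $P=BW_\Theta B$, and since $H$ is a $\p$-Hessenberg space it is $\Ad(P)$-stable (a $\p$-submodule of $\g$ is automatically stable under the adjoint action of the connected group $P$), so $\Ad(z^{-1})(H)\subset H$ immediately and nothing more is needed. You instead pass to the root space decomposition, observe that $\Ad(\dot w)(H_I)$ is independent of the representative because $H_I$ is $T$-stable, reduce the claim to the combinatorial identity $w(\Phi^+\setminus I)=\Phi^+\setminus I$ for $w\in W_\Theta$, and prove that identity by a Cartan-integer case analysis on $c=\frac{2(\delta,\alpha_i)}{(\alpha_i,\alpha_i)}$. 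This combinatorial step is essentially a complement-side analogue of the paper's Lemma~\ref{lemma:WTheta-action} (which the paper proves for the derivation-module application, not for this lemma), and your argument for it is sound: the $\Theta$-ideal conditions \eqref{eq:lower ideal} and \eqref{eq:Theta upper ideal} each handle one sign of $c$. You even flag the paper's shorter route at the end as an alternative, so you clearly see both. The trade-off is that your version is longer and repeats root combinatorics, but it is elementary and self-contained and avoids invoking connectedness of $P$; the paper's is slicker and uses $P=BW_\Theta B$ plus $\Ad(P)$-stability to dispose of the entire statement in a few lines.
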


\begin{proof}
Let $gT_K \in X(x,H)$ and $w \in W_\Theta$. 
Since $W_\Theta$ can be regarded as a subgroup of $N_K(T_K)/T_K$ under the isomorphism $N_K(T_K)/T_K \cong N_G(T)/T=W$, we write $w=zT_K$ for some $z \in N_K(T_K)$. 
Then we show that $gzT_K \in X(x,H)$. 
In other words, it suffices to show that $\Ad((gz)^{-1})(x) \in H$. 
We have $\Ad((gz)^{-1})(x) = \Ad(z^{-1}) (\Ad(g^{-1})(x)) \in \Ad(z^{-1})(H)$ by $gT_K \in X(x,H)$, so it is enough to show that $z \in P$. 
In fact, if $z \in P$, then we have $\Ad(z^{-1})(H) \subset H$ since $H$ is a $\p$-Hessenberg space. 
We now prove that $z \in P$.
By Remark~\ref{remark:Weyl_group}, the isomorphism $N_K(T_K)/T_K \cong N_G(T)/T=W$ sends $zT_K$ to $zT$. 
Thus, one can write $w=zT \in W_\Theta$ in $W = N_G(T)/T$.   
Since $P=BW_\Theta B$, we have $z \in P$.
Therefore, we conclude that $\Ad((gz)^{-1})(x) \in H$ and hence we have $(gT_K) \cdot w = gzT_K \in X(x,H)$. 
\end{proof}

The main theorem is to generalize Theorem~\ref{theorem:BGG} to regular nilpotent partial Hessenberg varieties as follows.

\begin{theorem} \label{theorem:main}
Let $N$ be a regular nilpotent element in $\g$ and $I$ a $\Theta$-ideal in $\Phi^+$.
Let $\Hess(N,I) \subset G/B$ and $\Hess_\Theta(N,I) \subset G/P$ be the associated regular nilpotent Hessenberg variety in \eqref{eq:Hess(N,I)} and the associated regular nilpotent partial Hessenberg variety in \eqref{eq:Hess(N,I)Theta}, respectively. 
Then, the homomorphism $\pi_I^*: H^*(\Hess_\Theta(N,I)) \rightarrow H^*(\Hess(N,I))$ induced from the natural projection $\pi_I: \Hess(N,I) \to \Hess_\Theta(N,I)$ is injective and its image coincides with $H^*(\Hess(N,I))^{W_\Theta}$, the invariants in $H^*(\Hess(N,I))$ under the action of $W_\Theta$ induced from Lemma~\ref{lemma:WTheta-action on X(x,H)}. 
In other words, $\pi_I^*: H^*(\Hess_\Theta(N,I)) \hookrightarrow H^*(\Hess(N,I))$ yields the isomorphism of graded $\Q$-algebras
\begin{align*}
H^*(\Hess_\Theta(N,I)) \cong H^*(\Hess(N,I))^{W_\Theta}.
\end{align*} 
\end{theorem}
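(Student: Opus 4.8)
The plan is to deduce the statement from three inputs: the Leray--Hirsch theorem applied to the fiber bundle $\pi_I\colon \Hess(N,I) \to \Hess_\Theta(N,I)$ of Lemma~\ref{lemma:fiber bundle}, the Bernstein--Gelfand--Gelfand isomorphism of Theorem~\ref{theorem:BGG}, and the fact that taking $W_\Theta$-invariants is an exact functor on $\Q$-vector spaces since $W_\Theta$ is finite. Throughout, the $W_\Theta$-action on $\Hess(N,I)$ is the one restricted from $G/B$ via Lemma~\ref{lemma:WTheta-action on X(x,H)}, which applies precisely because $I$ is a $\Theta$-ideal.

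First I would establish injectivity of $\pi_I^*$. In the proof of Proposition~\ref{proposition:propertyHess(N,I)Theta}-(3) it was shown that the restriction $\iota_I^*\colon H^*(\Hess(N,I)) \to H^*(P/B)$ to the fiber is surjective; since $H^*(P/B)$ is a finitely generated free $\Q$-module concentrated in even degrees (Proposition~\ref{proposition:paving} via Lemma~\ref{lemma:P/B}), lifting a homogeneous basis of $H^*(P/B)$ produces classes in $H^*(\Hess(N,I))$ whose restriction to each fiber of $\pi_I$ is a basis. The Leray--Hirsch theorem then makes $H^*(\Hess(N,I))$ a free $H^*(\Hess_\Theta(N,I))$-module on these classes via $\pi_I^*$; in particular $\pi_I^*$ is injective.

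Next I would identify the image. Consider the left commutative square of the diagram in the proof of Proposition~\ref{proposition:propertyHess(N,I)Theta}-(3), whose horizontal maps are $\pi^*\colon H^*(G/P) \to H^*(G/B)$ and $\pi_I^*\colon H^*(\Hess_\Theta(N,I)) \to H^*(\Hess(N,I))$ and whose vertical maps are the restriction homomorphisms. The right vertical restriction $H^*(G/B) \to H^*(\Hess(N,I))$ is surjective by \cite{AHMMS}, and it is $W_\Theta$-equivariant because the inclusion $\Hess(N,I) \subset G/B$ is a $W_\Theta$-equivariant map by Lemma~\ref{lemma:WTheta-action on X(x,H)}; exactness of $(-)^{W_\Theta}$ over $\Q$ then gives that $H^*(G/B)^{W_\Theta} \to H^*(\Hess(N,I))^{W_\Theta}$ is surjective. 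On the other hand, for every $w \in W_\Theta$ a representative $z \in N_K(T_K)$ lies in $P = BW_\Theta B$ by Remark~\ref{remark:Weyl_group}, so $\pi \circ w = \pi$ on $G/B$ and hence $\pi_I \circ w = \pi_I$ on $\Hess(N,I)$; consequently $w^* \circ \pi_I^* = \pi_I^*$, which shows that the image of $\pi_I^*$ is contained in $H^*(\Hess(N,I))^{W_\Theta}$. For the reverse inclusion, take $\beta \in H^*(\Hess(N,I))^{W_\Theta}$, lift it to $\tilde\beta \in H^*(G/B)^{W_\Theta}$ by the surjectivity just obtained, write $\tilde\beta = \pi^*(\gamma)$ for some $\gamma \in H^*(G/P)$ via Theorem~\ref{theorem:BGG}, and let $\bar\gamma$ be the image of $\gamma$ under the restriction $H^*(G/P) \to H^*(\Hess_\Theta(N,I))$; chasing the commutative square gives $\pi_I^*(\bar\gamma) = \beta$. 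Hence the image of $\pi_I^*$ equals $H^*(\Hess(N,I))^{W_\Theta}$, and together with injectivity this is the asserted isomorphism of graded $\Q$-algebras.

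The main obstacle I anticipate is not a deep difficulty but careful bookkeeping: one must verify that all the $W_\Theta$-actions in sight (on $G/B$, on the subvariety $\Hess(N,I)$, and the induced actions on cohomology) are mutually compatible and that the relevant maps are equivariant or invariant as needed — in particular that $\Hess_\Theta(N,I)$ is even defined and that $W_\Theta$ preserves $\Hess(N,I)$, which is exactly where the hypothesis that $I$ is a $\Theta$-ideal rather than merely a lower ideal enters — and that the Leray--Hirsch hypotheses genuinely hold in this setting, which is what makes the earlier surjectivity of $\iota_I^*$ the crucial technical point.
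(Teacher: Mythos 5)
Your proposal is correct and follows essentially the same strategy as the paper's proof: Leray--Hirsch for injectivity of $\pi_I^*$, exactness of $(-)^{W_\Theta}$ over $\Q$ to get surjectivity of $(j_I^*)^{W_\Theta}\colon H^*(G/B)^{W_\Theta} \to H^*(\Hess(N,I))^{W_\Theta}$, and the commutative square together with Theorem~\ref{theorem:BGG} to conclude. The one place you diverge is in showing that the image of $\pi_I^*$ lands inside the invariants: you observe directly that each representative $z\in N_K(T_K)$ of $w\in W_\Theta$ lies in $P$, so $\pi_I\circ w=\pi_I$ and hence $w^*\circ\pi_I^*=\pi_I^*$, whereas the paper lifts a class through the surjection $j_{I,\Theta}^*$ and invokes the $W_\Theta$-equivariance of $j_I^*$ together with the fact that the image of $\pi^*$ is $H^*(G/B)^{W_\Theta}$. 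Your version is marginally more direct; both rest on the same underlying fact from Lemma~\ref{lemma:WTheta-action on X(x,H)} and Remark~\ref{remark:Weyl_group} that a representative $z$ of $w\in W_\Theta$ lies in $P=BW_\Theta B$ (though strictly the equality $P=BW_\Theta B$ itself is quoted in Lemma~\ref{lemma:P/B}, not in the remark). No gap.
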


In order to prove Theorem~\ref{theorem:main}, we consider the following commutative diagram, which is discussed in the proof of Proposition~\ref{proposition:propertyHess(N,I)Theta}-(3)
\begin{center}
\begin{tikzcd} 
H^*(G/P) \arrow[r, "\pi^*"] \arrow[rightarrow,d, "j_{I, \Theta}^*"'] &[0.5em] H^*(G/B) \arrow[rightarrow,d, "j_I^*"'] \arrow[r, "\iota^*"] &[0.5em] H^*(P/B) \\
H^*(\Hess_\Theta(N,I))  \arrow[r, "\pi_I^*"] &[0.5em]  H^*(\Hess(N,I)) \arrow[r, "\iota_I^*"]  & H^*(P/B) \arrow[u,equal]  
\end{tikzcd}
\end{center}
where the horizontal arrows are induced from the fiber bundles by Lemma~\ref{lemma:fiber bundle} and the vertical arrows are induced from the inclusion maps $j_I: \Hess(N,I) \hookrightarrow G/B$ and $j_{I,\Theta}: \Hess_\Theta(N,I) \hookrightarrow G/P$ respectively. 

\begin{theorem} $($\cite[Theorem~1.1]{AHMMS}$)$ \label{theorem:restriction_map_surjective_AHMMS}
Let $I \subset \Phi^+$ be a lower ideal. 
Then, the restriction map $j_I^*: H^*(G/B) \to H^*(\Hess(N,I))$ is surjective. 
\end{theorem}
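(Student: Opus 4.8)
The plan is to reduce to the standard regular nilpotent element $N_0=\sum_{i=1}^n E_{\alpha_i}$ (via \eqref{eq:Hess(N0,H)Theta} with $\Theta=\emptyset$) and then to argue by induction on the codimension $|\Phi^+|-|I|$ of $\Hess(N_0,I)$ in $G/B$. The base case $I=\Phi^+$ is trivial, since then $\Hess(N_0,I)=G/B$. For the inductive step, choose a root $\gamma$ that is minimal in $\Phi^+\setminus I$; then $I':=I\cup\{\gamma\}$ is again a lower ideal, $\gamma$ is maximal in $I'$, and $|I'|=|I|+1$. Since $H_I\subset H_{I'}$ we have $\Hess(N_0,I)\subset\Hess(N_0,I')$, so the inclusion $j_I$ factors as $j_I=j_{I'}\circ\iota$ with $\iota\colon\Hess(N_0,I)\hookrightarrow\Hess(N_0,I')$; as $j_{I'}^*$ is surjective by the inductive hypothesis, it suffices to prove that $\iota^*\colon H^*(\Hess(N_0,I'))\to H^*(\Hess(N_0,I))$ is surjective.

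To handle $\iota^*$ I would exhibit $\Hess(N_0,I)$ as a Cartier divisor in $\Hess(N_0,I')$. Because $H_I$ and $H_{I'}$ are $B$-submodules with $H_{I'}/H_I\cong\g_{-\gamma}$ one-dimensional, the $B$-module $H_{I'}/H_I$ is the one-dimensional module of weight $-\gamma$ (the unipotent radical acts trivially, which one checks using that $I'$ is a lower ideal), and the assignment $gB\mapsto[\,g,\ \Ad(g^{-1})(N_0)\bmod H_I\,]$ is a well-defined section $s$ of the line bundle $L_{-\gamma}$ restricted to $\Hess(N_0,I')$, with zero set exactly $\Hess(N_0,I)$. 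Since $\Hess(N_0,I)$ is irreducible of dimension $|I'|-1$ by Proposition~\ref{proposition:propertiesHess(N,I)}, a local computation showing that $s$ vanishes to order one identifies $\Hess(N_0,I)$ with the reduced Cartier divisor $\{s=0\}$, with associated line bundle $L_{-\gamma}|_{\Hess(N_0,I)}$. This yields the Gysin long exact sequence
\begin{equation*}
\cdots \to H^{k-2}(\Hess(N_0,I)) \xrightarrow{\iota_*} H^k(\Hess(N_0,I')) \to H^k(U) \to H^{k-1}(\Hess(N_0,I)) \to \cdots,
\end{equation*}
where $U:=\Hess(N_0,I')\setminus\Hess(N_0,I)$, together with the self-intersection identity $\iota^*\iota_*(x)=x\cup c_1(L_{-\gamma}|_{\Hess(N_0,I)})$; both are valid for the (possibly singular) varieties at hand because $\Hess(N_0,I)$ is a Cartier divisor in a reduced irreducible variety.

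The decisive and hardest step is to understand $U$ well enough to close up the Gysin sequence. Intersecting with the cells of Precup's paving (Theorem~\ref{theorem:paving_Hessenberg}), one sees that $U\cap BwB/B$ is empty unless $w$ indexes a cell of $\Hess(N_0,I')$ and either $w$ does not index a cell of $\Hess(N_0,I)$ or $\gamma\in\NN(w)$, in which latter case it is, under the identification of Theorem~\ref{theorem:paving_Hessenberg}, the complement of a hypersurface in an affine space; the goal is to assemble these pieces into an explicit fibration of $U$ over a regular nilpotent Hessenberg variety of dimension $|I|$, whose cohomology, again by the inductive hypothesis (after translating back to $G/B$), is generated by restrictions from $G/B$. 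Feeding such a description into the Gysin sequence, and using the self-intersection formula to control $\operatorname{im}\iota^*$ beyond the contribution of $c_1(L_{-\gamma}|_{\Hess(N_0,I)})$, should force $\iota^*$ to be onto, completing the induction. The main obstacle is precisely this analysis of $U$ and the bookkeeping needed to conclude that every class in $H^*(\Hess(N_0,I))$ — not merely those visibly in the image of $c_1(L_{-\gamma}|_{\Hess(N_0,I)})\cup(-)$ — is hit by $\iota^*$. An alternative that sidesteps $U$ is to lift each basis class directly: by Theorem~\ref{theorem:paving_Hessenberg} the classes $[\overline{\Hess(N_0,I)\cap BwB/B}]$ form a $\Q$-basis of $H^*(\Hess(N_0,I))$, and one would seek, for each such $w$, a Schubert variety $X_v\subset G/B$ of complex dimension $|\Phi^+\setminus I|+|\NN(w)\cap I|$ meeting $\Hess(N_0,I)$ properly, with $\overline{\Hess(N_0,I)\cap BwB/B}$ among its components and the remaining components indexed by elements strictly larger in a suitable order, so that $j_I^*([X_v])=[\overline{\Hess(N_0,I)\cap BwB/B}]+(\text{strictly higher terms})$ and an upper-triangularity argument concludes; the obstacle there is the purely combinatorial task of pinning down the correct $v=v(w)$.
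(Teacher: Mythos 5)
This statement is quoted in the paper directly from \cite[Theorem~1.1]{AHMMS} and is not proved there, so there is no internal argument to compare against; judged on its own terms, your proposal contains a genuine gap. The reduction to $N_0$ and the one-root-at-a-time induction are fine as a framework, and your observation that $H_{I'}/H_I$ is the one-dimensional $B$-module of weight $-\gamma$ (because $\gamma$ is minimal in $\Phi^+\setminus I$) and that $gB\mapsto[g,\Ad(g^{-1})(N_0)\bmod H_I]$ is a section of $L_{-\gamma}|_{\Hess(N_0,I')}$ cutting out $\Hess(N_0,I)$ is correct and worth keeping. But the decisive step is explicitly deferred: you write that an analysis of $U=\Hess(N_0,I')\setminus\Hess(N_0,I)$ ``should force $\iota^*$ to be onto,'' and that analysis is the entire content of the claim. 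Moreover the mechanism is structurally misdirected: the Gysin/localization sequence for $Z\subset X$ constrains the image of the pushforward $\iota_*$ and the cohomology of $U$, not the image of the pullback $\iota^*$, and the self-intersection identity only shows that $\operatorname{im}\iota^*$ absorbs cup product with $c_1(L_{-\gamma}|_Z)$, which says nothing about surjectivity since that class is never invertible. There is also an unsupported technical assertion: for the (typically singular) varieties at hand, a Gysin pushforward on singular cohomology and the identification $H^k_Z(X)\cong H^{k-2}(Z)$ require (rational) smoothness, not merely that $Z$ be a reduced Cartier divisor; the correct exact sequence lives in Borel--Moore homology, and transporting it to cohomology is precisely where the difficulty lies. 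Your second, ``upper-triangularity'' alternative is likewise a plan with its key combinatorial input ($v=v(w)$ and the properness of the intersections) left unresolved.

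For orientation, the argument in \cite{AHMMS} is of a completely different nature and does not proceed by induction on $|I|$: one first shows $\a(I)\subseteq\ker\varphi_I$, then uses the freeness of the ideal arrangement $\AA_I$ \cite{ABCHT} to compute the Hilbert series of $\RR/\a(I)$ (a complete intersection, hence a Poincar\'e duality algebra) and matches it with the Poincar\'e polynomial of $\Hess(N,I)$ coming from the affine paving; injectivity of the induced map $\RR/\a(I)\to H^*(\Hess(N,I))$ then follows from the Poincar\'e duality algebra structure together with a top-degree computation, and the dimension count forces it to be an isomorphism, giving surjectivity of $j_I^*$ as a byproduct. If you wish to salvage a purely geometric induction along the lines you sketch, you need an actual theorem about $U$ (or an independent generation statement for $H^*(\Hess(N_0,I))$ by classes restricted from $G/B$); as written, the proposal does not establish the result.
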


\begin{proposition} \label{proposition:main}
Let $I \subset \Phi^+$ be a $\Theta$-ideal. 
Then the following holds.
\begin{enumerate}
\item[(1)] The restriction map $j_{I,\Theta}^*: H^*(G/P) \to H^*(\Hess_\Theta(N,I))$ is surjective.
\item[(2)] The map $\pi_I^*: H^*(\Hess_\Theta(N,I)) \rightarrow H^*(\Hess(N,I))$ is injective. 
\end{enumerate}
\end{proposition}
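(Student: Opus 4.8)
The plan is to prove part~(2) first and then deduce part~(1) from it; the main inputs are Theorem~\ref{theorem:BGG} (the Bernstein--Gelfand--Gelfand description $\pi^*(H^*(G/P))=H^*(G/B)^{W_\Theta}$), Theorem~\ref{theorem:restriction_map_surjective_AHMMS} (surjectivity of $j_I^*$), and the $W_\Theta$-equivariance furnished by Lemma~\ref{lemma:WTheta-action on X(x,H)}.

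For~(2), I would apply the Leray--Hirsch theorem to the fiber bundle $\pi_I\colon\Hess(N,I)\to\Hess_\Theta(N,I)$ with fiber $P/B$ from Lemma~\ref{lemma:fiber bundle}. First, exactly as in the proof of Proposition~\ref{proposition:propertyHess(N,I)Theta}-(3), $\iota^*\colon H^*(G/B)\to H^*(P/B)$ is surjective (restriction of cohomology to the Schubert variety $P/B=X_{w_0^{\Theta}}$, using Lemma~\ref{lemma:P/B}), hence so is $\iota_I^*\colon H^*(\Hess(N,I))\to H^*(P/B)$ via the commutative square $\iota_I^*\circ j_I^*=\iota^*$. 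Choosing a homogeneous $\Q$-basis of $H^*(P/B)$ that contains $1$ and lifting it through $\iota_I^*$ (with $1$ lifting to $1$), Leray--Hirsch makes $H^*(\Hess(N,I))$ a free $H^*(\Hess_\Theta(N,I))$-module, via $\pi_I^*$, with this set as a basis; the summand attached to the lift of $1$ is exactly $\pi_I^*\bigl(H^*(\Hess_\Theta(N,I))\bigr)$, so $\pi_I^*$ is injective.

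For~(1), I would argue with graded $\Q$-vector spaces. First, $\pi_I^*$ lands in the $W_\Theta$-invariants: the action of $W_\Theta$ on $\Hess(N,I)$ from Lemma~\ref{lemma:WTheta-action on X(x,H)} satisfies $\pi_I\circ w=\pi_I$ for every $w\in W_\Theta$, since a representative $z\in N_G(T)$ of $w$ lies in $P$ (because $P=BW_\Theta B$) and hence $\pi(gzB)=gzP=gP$; thus $w^*\circ\pi_I^*=\pi_I^*$. Next, $j_I\colon\Hess(N,I)\hookrightarrow G/B$ is $W_\Theta$-equivariant by Lemma~\ref{lemma:WTheta-action on X(x,H)}, so $j_I^*$ is $W_\Theta$-equivariant; being surjective (Theorem~\ref{theorem:restriction_map_surjective_AHMMS}), and using that $\Q[W_\Theta]$ is semisimple so invariants can be produced by averaging, $j_I^*$ restricts to a surjection $H^*(G/B)^{W_\Theta}\twoheadrightarrow H^*(\Hess(N,I))^{W_\Theta}$. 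Combining this with Theorem~\ref{theorem:BGG} and the commutative square $\pi_I^*\circ j_{I,\Theta}^*=j_I^*\circ\pi^*$ gives
\begin{align*}
\pi_I^*\bigl(\operatorname{im}(j_{I,\Theta}^*)\bigr)
&=j_I^*\bigl(\pi^*(H^*(G/P))\bigr)
=j_I^*\bigl(H^*(G/B)^{W_\Theta}\bigr)\\
&=H^*(\Hess(N,I))^{W_\Theta}
\;\supseteq\;\operatorname{im}(\pi_I^*),
\end{align*}
whence $\pi_I^*\bigl(\operatorname{im}(j_{I,\Theta}^*)\bigr)=\operatorname{im}(\pi_I^*)$; since $\pi_I^*$ is injective by~(2), this forces $\operatorname{im}(j_{I,\Theta}^*)=H^*(\Hess_\Theta(N,I))$, which is~(1). (As a byproduct one also reads off $\operatorname{im}(\pi_I^*)=H^*(\Hess(N,I))^{W_\Theta}$, the remaining content of Theorem~\ref{theorem:main}.)

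The argument is essentially formal, so I do not anticipate a genuine obstacle; the points needing care are purely bookkeeping, namely confirming that $\iota_I^*$ is actually surjective so that Leray--Hirsch applies, and checking that every map in the two commuting squares is genuinely $W_\Theta$-equivariant --- both of which reduce to $P=BW_\Theta B$ together with Lemma~\ref{lemma:WTheta-action on X(x,H)}. Rational coefficients are essential for the averaging step.
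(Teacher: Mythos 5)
Your proof is correct. Part (2) is essentially the paper's own argument: both of you deduce injectivity of $\pi_I^*$ from the Leray--Hirsch splitting of $H^*(\Hess(N,I))$ over $H^*(\Hess_\Theta(N,I))$ after checking that $\iota_I^*$ is surjective; the paper packages this as the composite $\rho_I\circ\zeta_I=\pi_I^*$ with $\zeta_I(x)=1\otimes x$, which is exactly your ``summand attached to the lift of $1$.''

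Part (1) is where you genuinely diverge from the paper. The paper does not invoke Theorem~\ref{theorem:BGG}, Lemma~\ref{lemma:WTheta-action on X(x,H)}, or any $W_\Theta$-averaging at this stage: it instead compares the two Leray--Hirsch isomorphisms $\rho$ and $\rho_I$ through the commuting square with $\id\otimes j_{I,\Theta}^*$ on the left and $j_I^*$ on the right, deduces that $\id\otimes j_{I,\Theta}^*$ is surjective because $j_I^*$ is (Theorem~\ref{theorem:restriction_map_surjective_AHMMS}), and then projects onto the $H^0(P/B)\cong\Q$ slice to extract surjectivity of $j_{I,\Theta}^*$ itself. That argument is purely about fiber bundles and tensor decompositions, and in particular it proves (1) independently of (2) and of the Weyl-group action. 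Your route instead establishes $\operatorname{im}(\pi_I^*)\subseteq H^*(\Hess(N,I))^{W_\Theta}$ (via $\pi_I\circ w=\pi_I$), establishes $j_I^*\bigl(H^*(G/B)^{W_\Theta}\bigr)=H^*(\Hess(N,I))^{W_\Theta}$ by averaging, and then squeezes $\operatorname{im}(\pi_I^*)$ between $\pi_I^*(\operatorname{im}(j_{I,\Theta}^*))$ and the invariants, finishing with the injectivity from (2). This is valid, and as you note it gives the identification $\operatorname{im}(\pi_I^*)=H^*(\Hess(N,I))^{W_\Theta}$ --- i.e.\ the rest of Theorem~\ref{theorem:main} --- for free. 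The trade-off is that the paper keeps Proposition~\ref{proposition:main} logically independent of the $W_\Theta$-equivariance machinery and derives Theorem~\ref{theorem:main} afterwards as a short consequence, whereas you front-load that machinery into the proposition and make (1) depend on (2); both are correct, and yours is somewhat shorter in total but less modular.
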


\begin{proof}
(1) As discussed in the proof of Proposition~\ref{proposition:propertyHess(N,I)Theta}-(3), both $\iota^*:H^*(G/B) \to H^*(P/B)$ and $\iota_I^*: H^*(\Hess(N,I)) \to H^*(P/B)$ are surjective. 
Let $s: H^*(P/B) \to H^*(G/B)$ be a section of $\iota^*$, namaly $\iota^* \circ s = \id$.  
Then the Leray--Hirsch theorem yields the following isomorphism of $\Q$-vector spaces 
\begin{align*}
\rho: H^*(P/B) \otimes_\Q H^*(G/P) \cong H^*(G/B),
\end{align*}
which sends $\sum \alpha \otimes \beta$ to $\sum s(\alpha) \cdot \pi^*(\beta)$. 
Define $s_I: H^*(P/B) \to H^*(\Hess(N,I))$ by $s_I \coloneqq j_I^* \circ s$. 
Then one can easily see that $s_I$ is a section of $\iota_I^*: H^*(\Hess(N,I)) \to H^*(P/B)$.
By the Leray--Hirsch isomorphism we have the isomorphism of $\Q$-vector spaces as folllows
\begin{align} \label{eq:rhoI}
\rho_I: H^*(P/B) \otimes_\Q H^*(\Hess_\Theta(N,I)) \cong H^*(\Hess(N,I)),
\end{align}
which sends $\sum \alpha \otimes \beta$ to $\sum s_I(\alpha) \cdot \pi_I^*(\beta)$. 
Hence, we obtain the following commutative diagram
\begin{center}
\begin{tikzcd}
H^*(P/B) \otimes_\Q H^*(G/P) \arrow[r, "\rho", "\cong"'] \arrow[rightarrow,d, "\id \otimes j_{I, \Theta}^*"'] &[0.5em] H^*(G/B) \arrow[rightarrow,d, "j_I^*"'] \\
H^*(P/B) \otimes_\Q H^*(\Hess_\Theta(N,I)) \arrow[r, "\rho_I", "\cong"'] &[0.5em]  H^*(\Hess(N,I)). 
\end{tikzcd}
\end{center}
Since the restriction map $j_I^*: H^*(G/B) \to H^*(\Hess(N,I))$ is surjective by Theorem~\ref{theorem:restriction_map_surjective_AHMMS}, the map $\id \otimes j_{I, \Theta}^*: H^*(P/B) \otimes_\Q H^*(G/P) \to H^*(P/B) \otimes_\Q H^*(\Hess_\Theta(N,I))$ is also surjective, which implies the surjectivity of $j_{I,\Theta}^*: H^*(G/P) \to H^*(\Hess_\Theta(N,I))$.
In fact, consider the surjective map $\xi: H^*(P/B) \otimes_\Q H^*(G/P) \to H^*(G/P)$ defined by $\xi(\sum_{i+j=k} \alpha_i \otimes \beta_j)=\alpha_0 \cdot \beta_k$ in each degree $k$ where we note that $\alpha_0 \in H^0(P/B) \cong \Q$.
By the similar way we define the surjective map $\xi_I: H^*(P/B) \otimes_\Q H^*(\Hess_\Theta(N,I)) \to H^*(\Hess_\Theta(N,I))$ and they make the following commutative diagram:
\begin{center}
\begin{tikzcd}
H^*(P/B) \otimes_\Q H^*(G/P) \arrow[r, "\xi"] \arrow[rightarrow,d, "\id \otimes j_{I, \Theta}^*"'] &[0.5em] H^*(G/P) \arrow[rightarrow,d, "j_{I, \Theta}^*"'] \\
H^*(P/B) \otimes_\Q H^*(\Hess_\Theta(N,I)) \arrow[r, "\xi_I"] &[0.5em]  H^*(\Hess_\Theta(N,I)). 
\end{tikzcd}
\end{center}
Since both $\id \otimes j_{I, \Theta}^*$ and $\xi_I$ are surjective, $j_{I, \Theta}^*$ is also surjective.

(2) Define the map $\zeta_I: H^*(\Hess_\Theta(N,I)) \to H^*(P/B) \otimes_\Q H^*(\Hess_\Theta(N,I))$ by $\zeta_I(x) = 1 \otimes x$, which is injective since $H^0(P/B) \cong \Q$.
This makes the following commutative diagram:
\begin{center}
\begin{tikzcd}
H^*(P/B) \otimes_\Q H^*(\Hess_\Theta(N,I)) \arrow[r, "\rho_I", "\cong"'] &[0.5em] H^*(\Hess(N,I)) \\
 &[0.5em]  H^*(\Hess_\Theta(N,I)) \arrow[hookrightarrow,lu, "\zeta_I"] \arrow[rightarrow,u, "\pi_I^*"'] 
\end{tikzcd}
\end{center}
where $\rho_I$ is the Leray--Hirsch isomorphism defined in \eqref{eq:rhoI}. 
Since $\zeta_I$ is injective, $\pi_I^*$ is also injective.
\end{proof}

\begin{proof}[Proof of Theorem~\ref{theorem:main}]
The injectivity of $\pi_I^*: H^*(\Hess_\Theta(N,I)) \rightarrow H^*(\Hess(N,I))$ follows from Proposition~\ref{proposition:main}-(2), so we prove that the image of $\pi_I^*$ equals $H^*(\Hess(N,I))^{W_\Theta}$. 
Since the restriction map $j_I^*: H^*(G/B) \rightarrow H^*(\Hess(N,I))$ is a $W_\Theta$-equivariant map by Lemma~\ref{lemma:WTheta-action on X(x,H)}, this induces the map $(j_I^*)^{W_\Theta}: H^*(G/B)^{W_\Theta} \rightarrow H^*(\Hess(N,I))^{W_\Theta}$.
One can see that $(j_I^*)^{W_\Theta}$ is a surjective map since $j_I^*$ is surjective. 
In fact, for any $y \in H^*(\Hess(N,I))^{W_\Theta}$, there exists $x \in H^*(G/B)$ such that $j_I^*(x)=y$. 
Putting $x'= \frac{1}{|W_\Theta|}\sum_{w \in W_\Theta} w \cdot x \in H^*(G/B)^{W_\Theta}$, we have $(j_I^*)^{W_\Theta}(x')=j_I^*(x')=y$. 
Consider the following commutative 
\begin{center}
\begin{tikzcd} 
H^*(G/P) \arrow[hookrightarrow,r, "\pi^*"] \arrow[twoheadrightarrow,d, "j_{I, \Theta}^*"'] &[0.5em] H^*(G/B) \arrow[twoheadrightarrow,d, "j_I^*"'] \\
H^*(\Hess_\Theta(N,I))  \arrow[hookrightarrow,r, "\pi_I^*"] &[0.5em]  H^*(\Hess(N,I)) 
\end{tikzcd}
\end{center}
where the surjectivity of $j_I^*$ and $j_{I, \Theta}^*$ follow from Theorem~\ref{theorem:restriction_map_surjective_AHMMS} and Proposition~\ref{proposition:main}-(1). 
Since the image of $\pi^*$ coincides with $H^*(G/B)^{W_\Theta}$ by Theorem~\ref{theorem:BGG}, the image of $\pi_I^*$ is included in $H^*(\Hess(N,I))^{W_\Theta}$ by the surjectivity of $j_{I, \Theta}^*$.
Hence, the commutative diagram above yields the following commutative diagram
\begin{center}
\begin{tikzcd} 
H^*(G/P) \arrow[r, "\cong"] \arrow[twoheadrightarrow,d, "j_{I, \Theta}^*"'] &[0.5em] H^*(G/B)^{W_\Theta} \arrow[twoheadrightarrow,d, "(j_I^*)^{W_\Theta}"'] \\
H^*(\Hess_\Theta(N,I))  \arrow[hookrightarrow,r, ] &[0.5em]  H^*(\Hess(N,I))^{W_\Theta}. 
\end{tikzcd}
\end{center}
Since the bottom arrow is surjective, the homomorphism $\pi_I^*$ yields the isomorphism $H^*(\Hess_\Theta(N,I)) \cong H^*(\Hess(N,I))^{W_\Theta}$ as desired.
\end{proof}

\bigskip

\section{Invariants in logarithmic derivation modules} \label{sect:invariants logarithmic derivation modules}

By the work of \cite{AHMMS}, we can describe the cohomology rings of regular nilpotent Hessenberg varieties by the logarithmic derivation modules of ideal arrangements. 
We generalize this result to regular nilpotent partial Hessenberg varieties in this section.

\subsection{Logarithmic derivation module} \label{subsect:logarithmic derivation module}
A \emph{derivation} of $\RR=\Sym \t^*_{\Q}$ over $\Q$ is a $\Q$-linear map $\der: \RR \to \RR$ such that 
\begin{align} \label{eq:derivation}
\der(fg)= \der(f)g + f\der(g) \ \textrm{for} \ f,g \in \RR.
\end{align}
The \emph{derivation module} $\Der \RR$ of $\RR$ is the collection of all derivations of $\RR$ over $\Q$. 
For a lower ideal $I \subset \Phi^+$, the \emph{ideal arrangement} $\AA_I$ is the set of  hyperplanes orthogonal to $\alpha \in I$.
Its \emph{logarithmic derivation module} $D(\AA_I)$ is defined to be the following $\RR$-module:
\begin{align*}
D(\AA_I) \coloneqq \{\der \in \Der \RR \mid \der(\alpha) \in \RR \, \alpha \ \textrm{for all} \ \alpha \in I \}.
\end{align*}
It follows from \cite[Theorem~1.1]{ABCHT} that $D(\AA_I)$ is a free $\RR$-module for all lower ideal $I \subset \Phi^+$. 
We remark that \cite{ABCHT} also gives the exponents of $\AA_I$ as the dual partition of the height distribution in $I$ where the exponents of $\AA_I$ denotes a sequence of degrees for a homogeneous basis of $D(\AA_I)$. 
Let $Q \in \Sym^2 (\t^*_\Q)^W$ be a $W$-invariant non-degenerate quadratic form.
For a lower ideal $I \subset \Phi^+$, we define an ideal $\a(I)$ of $\RR$ by 
\begin{align} \label{eq:a(I)}
\a(I) \coloneqq \{\der(Q) \in \RR \mid \der \in D(\AA_I) \}.
\end{align}
Note that the ideal $\a(I)$ is independent of a choice of $Q$ (see \cite[Remark~3.6]{AHMMS}).
For a lower ideal $I \subset \Phi^+$, we denote the composition of $\varphi$ in \eqref{eq:homomorphism_flag} and the restriction map induced from the inclusion $j_I: \Hess(N,I) \hookrightarrow G/B$ by
\begin{align} \label{eq:homomorphism_Hess(N,I)}
\varphi_I: \RR \xrightarrow{\varphi} H^*(G/B) \xrightarrow{j_I^*} H^*(\Hess(N,I)).
\end{align} 
Note that both $\varphi$ and $j_I^*$ are surjective maps by \cite{Bor53} and Theorem~\ref{theorem:restriction_map_surjective_AHMMS}, so the map $\varphi_I$ is also surjective. 

\begin{theorem} $($\cite[Theorem~1.1]{AHMMS}$)$ \label{theorem:AHMMS}
Let $I \subset \Phi^+$ be a lower ideal. 
Then, the kernel of the surjective map $\varphi_I$ in \eqref{eq:homomorphism_Hess(N,I)} coincides with $\a(I)$ defined in \eqref{eq:a(I)}.
In particular, $\varphi_I$ yields the isomorphism of graded $\Q$-algebras
\begin{align*} 
H^*(\Hess(N,I)) \cong \RR/\a(I).  
\end{align*} 
\end{theorem}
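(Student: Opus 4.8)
The plan is to leverage the surjectivity of $\varphi_I$ (recorded just after \eqref{eq:homomorphism_Hess(N,I)}), which already gives a graded algebra isomorphism $\RR/\ker\varphi_I\cong H^*(\Hess(N,I))$; the theorem then reduces to the equality of ideals $\ker\varphi_I=\a(I)$, which I would establish by proving the two inclusions separately. A preliminary observation: $I\subseteq\Phi^+$ forces $D(\AA_{\Phi^+})\subseteq D(\AA_I)$, hence $\a(\Phi^+)\subseteq\a(I)$, and $\a(\Phi^+)=(\RR^W_+)$ is classical (a homogeneous basis of the Coxeter-arrangement module $D(\AA_{\Phi^+})$ may be chosen so that applying $Q$ yields, up to scalars, a system of basic $W$-invariants; e.g. the Euler derivation gives $2Q\in\a(\Phi^+)$). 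Since also $(\RR^W_+)=\ker\varphi\subseteq\ker\varphi_I$ by Borel's theorem, $\RR/\a(I)$ is a quotient of $\RR/(\RR^W_+)\cong H^*(G/B)$, hence finite-dimensional.

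For $\a(I)\subseteq\ker\varphi_I$ — the geometric heart of the argument — I would, working modulo $(\RR^W_+)$ inside $H^*(G/B)=\RR/(\RR^W_+)$, show that the image of each generator $\psi(Q)$ ($\psi\in D(\AA_I)$) is killed by $j_I^*$. The structural input is that $\Hess(N,I)$ is the zero locus in $G/B$ of a regular section of the Hessenberg quotient bundle $\mathcal{Q}_I:=G\times_B(\g/H_I)$ (induced by the constant map $gB\mapsto N$); regularity holds since $\dim_\C\Hess(N,I)=|I|$ (Proposition \ref{proposition:propertiesHess(N,I)}) equals $\dim_\C(G/B)-\operatorname{rk}\mathcal{Q}_I$, so $\Hess(N,I)\hookrightarrow G/B$ is a codimension-$|\Phi^+\setminus I|$ local complete intersection. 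From here there are two natural routes: (i) induct on $|\Phi^+\setminus I|$ with base case $\Hess(N,\Phi^+)=G/B$ (Borel), using that for a minimal $\beta\in\Phi^+\setminus I$ the ideal $I^+:=I\cup\{\beta\}$ is again a lower ideal, $\Hess(N,I)$ sits inside $\Hess(N,I^+)$ as the zero scheme of a section of a line bundle of class $\pm\varphi_{I^+}(\beta)$, and the addition--deletion theorem for the free arrangements $\AA_I\subset\AA_{I^+}$ (freeness from \cite{ABCHT}) relates a homogeneous basis of $D(\AA_{I^+})$ to one of $D(\AA_I)$ through a single altered generator; or (ii) degenerate $\Hess(N,I)$ across a flat family from a regular semisimple Hessenberg variety, whose equivariant cohomology carries an explicit combinatorial presentation in which the arrangement relations $\a(I)$ are visible, and transport these relations to the special fibre. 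Either way, producing the vanishing of $\varphi_I(\psi(Q))$ is the step that carries the real content.

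For the reverse inclusion $\ker\varphi_I\subseteq\a(I)$ I would use a Hilbert-series count. By \cite{ABCHT}, $D(\AA_I)$ is free with a homogeneous basis $\psi_1,\dots,\psi_n$ of degrees $e_1\geq\cdots\geq e_n$ equal to the dual partition of the height distribution of $I$; thus $\a(I)$ is generated by $\psi_1(Q),\dots,\psi_n(Q)$, of degrees $e_i+1$. Since $\RR/\a(I)$ is finite-dimensional (by the preliminary) and $\RR$ is Cohen--Macaulay of Krull dimension $n$, these elements form a regular sequence, so
\begin{align*}
\operatorname{Hilb}(\RR/\a(I),\q)=\prod_{i=1}^n\frac{1-\q^{e_i+1}}{1-\q}=\prod_{i=1}^n\bigl(1+\q+\cdots+\q^{e_i}\bigr).
\end{align*}
Grouping the $e_i$ by value gives $\#\{i:e_i=k\}=\lambda_k^I-\lambda_{k+1}^I=\mm_k^I$ (a standard dual-partition identity), so the product equals $\prod_k(1+\q+\cdots+\q^k)^{\mm_k^I}=\prod_{\alpha\in I}\frac{1-\q^{\height(\alpha)+1}}{1-\q^{\height(\alpha)}}$ by the identity between \eqref{eq:PoincarePolynomialHessGBproduct} and \eqref{eq:PoincarePolynomialHessGBHeightDistribution}, which by \eqref{eq:PoincarePolynomialHessGBproduct} equals $\Poin(\Hess(N,I),\sqrt{\q})=\operatorname{Hilb}(H^*(\Hess(N,I)),\q)$ (the cohomology being concentrated in even degrees). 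Since the first inclusion furnishes a graded surjection $\RR/\a(I)\onto\RR/\ker\varphi_I\cong H^*(\Hess(N,I))$ and both sides now have the same Hilbert series, this surjection is an isomorphism, forcing $\ker\varphi_I=\a(I)$. (One may replace the use of \eqref{eq:PoincarePolynomialHessGBproduct} by Precup's summation formula \eqref{eq:PoincarePolynomialHessGB} at the cost of invoking the Sommers--Tymoczko identity.)

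The main obstacle is thus the inclusion $\a(I)\subseteq\ker\varphi_I$. The difficulty is precisely that $\Hess(N,I)$ is in general singular, so when passing from $\Hess(N,I^+)$ to $\Hess(N,I)$ one cannot simply ``quotient $H^*$ by a Chern class'': the relation $\varphi_I(\beta)\cdot\varphi_I(\theta'(Q))=0$ coming from addition--deletion does not immediately yield $\varphi_I(\theta'(Q))=0$, since $\varphi_I(\beta)$ is a zero-divisor in the finite-dimensional ring $H^*(\Hess(N,I))$. Overcoming this is exactly where the surjectivity of $j_I^*$ (Theorem \ref{theorem:restriction_map_surjective_AHMMS}) and the local-complete-intersection structure of $\Hess(N,I)\hookrightarrow G/B$ — or, in route (ii), fine control of the semisimple-to-nilpotent specialization — must be used.
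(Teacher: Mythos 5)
First, a point of reference: the manuscript does not prove this statement at all --- it is imported verbatim from \cite[Theorem~1.1]{AHMMS} --- so there is no in-paper proof to match yours against; the question is whether your reconstruction would stand on its own. It does not, because the inclusion $\a(I)\subseteq\ker\varphi_I$, which you yourself call ``the step that carries the real content,'' is never actually proved. You name two possible strategies (an addition--deletion induction over minimal roots of $\Phi^+\setminus I$, or a degeneration from the regular semisimple case), carry out neither, and even record the obstruction that defeats the first as stated (that $\varphi_I(\beta)$ is a zero-divisor in $H^*(\Hess(N,I))$, so the relation $\varphi_I(\beta)\cdot\varphi_I(\der'(Q))=0$ does not yield $\varphi_I(\der'(Q))=0$). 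This is the geometric heart of the theorem; in \cite{AHMMS} it is handled by passing to the equivariant cohomology of the natural $\C^*$-action on $\Hess(N,I)$ and checking the vanishing of the classes $\der(Q)$ after restriction to the fixed points, machinery that is absent from your sketch. Everything else in your proposal is conditional on this missing step.

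The second half also has a circularity. You close the argument by matching the Hilbert series $\prod_{i=1}^n(1+\q+\cdots+\q^{e_i})$ of the complete intersection $\RR/\a(I)$ against $\Poin(\Hess(N,I),\sqrt{\q})$ via the product formula \eqref{eq:PoincarePolynomialHessGBproduct}; but, as the manuscript itself notes, that formula was only announced by Peterson and was first \emph{proved} in \cite{AHMMS} as a corollary of the very theorem you are establishing. Your fallback through Precup's summation formula \eqref{eq:PoincarePolynomialHessGB} plus the Sommers--Tymoczko identity is equally circular, since the general-type proof of that identity is likewise a consequence of this theorem (before \cite{AHMMS} it was known only for some types, with $D_n$ for $n\geq 8$ open). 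The comparison can be repaired without any combinatorial input: your regular-sequence observation shows $\RR/\a(I)$ is an Artinian complete intersection, hence a Poincar\'e duality algebra with one-dimensional socle in degree $\sum_i e_i=|I|$; since $\Hess(N,I)$ is irreducible of complex dimension $|I|$ (Proposition~\ref{proposition:propertiesHess(N,I)}), we have $H^{2|I|}(\Hess(N,I))\neq 0$, and any nonzero ideal of an Artinian Gorenstein graded algebra contains the socle, so the graded surjection $\RR/\a(I)\onto H^*(\Hess(N,I))$ furnished by the first inclusion is automatically injective. The correct parts of your outline (the reduction to $\ker\varphi_I=\a(I)$, the identities $\a(\Phi^+)=(\RR^W_+)\subseteq\a(I)$, the generation of $\a(I)$ by $\der_1(Q),\dots,\der_n(Q)$ from the freeness of $D(\AA_I)$) are genuinely the right scaffolding, but without the forward inclusion the proof is incomplete.
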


\subsection{$W_\Theta$-action}
The aim of this section is to see that $D(\AA_I)$ admits an $W_\Theta$-action for arbitrary $\Theta$-ideal $I$.
The $W$-action on $\RR =\Sym \t^*_\Q$ induces a $W$-action on the derivation module $\Der \RR$.
In fact, for $w \in W$ and $\der \in \Der \RR$, the derivation $w \cdot \der$ is defined by 
\begin{align} \label{eq:W-action_derivation}
(w \cdot \der) (f) \coloneqq w(\der(w^{-1}f)) \ \ \ \textrm{for}  \ f \in \RR.
\end{align} 
Then one can easily see that $w \cdot \der$ satisfies \eqref{eq:derivation}, and hence we have $w \cdot \der \in \Der \RR$. 

\begin{lemma} \label{lemma:WTheta-action}
Let $I \subset \Phi^+$ be a $\Theta$-ideal. 
If $\alpha \in I$ and $\beta \in \Theta$ with $\alpha \neq \beta$, then $s_\beta(\alpha) \in I$.
\end{lemma}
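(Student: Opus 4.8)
The plan is to analyze what $s_\beta(\alpha)$ looks like in terms of the root system and use the two ideal conditions — that $I$ is a lower ideal and an upper ideal with respect to $\Theta$ — to conclude membership. Recall that for $\beta \in \Theta$ a simple root and $\alpha \in I \subset \Phi^+$ with $\alpha \neq \beta$, the reflection $s_\beta$ acts by $s_\beta(\alpha) = \alpha - \langle \alpha, \beta^\vee\rangle \beta$, where $\langle \alpha, \beta^\vee\rangle = \frac{2(\alpha,\beta)}{(\beta,\beta)} \in \Z$. Since $\beta$ is simple and $\alpha \in \Phi^+$ with $\alpha \neq \beta$, the root $s_\beta(\alpha)$ is again a positive root (this is the standard fact, e.g.\ \cite[Lemma~10.2]{Hum72} or the analogue of \cite[Lemma~9.4]{Hum72} already invoked in the excerpt). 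So $s_\beta(\alpha) \in \Phi^+$ and the only question is whether it lies in $I$.

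\textbf{Case split on the sign of $\langle \alpha,\beta^\vee\rangle$.} First I would handle the easy direction. If $\langle \alpha, \beta^\vee \rangle \leq 0$, write $c = -\langle \alpha,\beta^\vee\rangle \geq 0$, so $s_\beta(\alpha) = \alpha + c\beta$. If $c = 0$ then $s_\beta(\alpha) = \alpha \in I$ trivially. If $c > 0$, then since $\beta \in \Theta \subset \Phi^+_\Theta$, we may add $\beta$ to $\alpha$ one step at a time: $\alpha, \alpha+\beta, \dots, \alpha + c\beta = s_\beta(\alpha)$, and each intermediate vector is a positive root (being on the $\beta$-root string through $\alpha$, which is unbroken) that differs from the previous by a root in $\Phi^+_\Theta$. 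Using the condition that $I$ is an upper ideal with respect to $\Theta$ (equivalently condition \eqref{eq:Theta upper ideal}: $\alpha \preceq_\Theta \gamma \Rightarrow \gamma \in I$), each successive root lies in $I$, so $s_\beta(\alpha) \in I$.

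\textbf{The other case.} Now suppose $\langle \alpha,\beta^\vee \rangle > 0$, so $s_\beta(\alpha) = \alpha - d\beta$ with $d = \langle \alpha,\beta^\vee\rangle > 0$. Then $s_\beta(\alpha) \preceq \alpha$: indeed $\alpha - s_\beta(\alpha) = d\beta$ is a sum of positive roots, so $s_\beta(\alpha)$ is below $\alpha$ in the usual partial order $\preceq$ on $\Phi^+$. Since $s_\beta(\alpha) \in \Phi^+$ and $I$ is a lower ideal, condition \eqref{eq:lower ideal} gives $s_\beta(\alpha) \in I$ directly.

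\textbf{Expected main obstacle.} The only subtle point is ensuring $s_\beta(\alpha) \in \Phi^+$ (not merely in $\R_{\geq 0}$-span), and, in the first case, that every root on the $\beta$-string $\alpha, \alpha+\beta, \dots, \alpha+c\beta$ is genuinely a root so that condition \eqref{eq:Theta upper ideal} can be applied at each step — but both are standard consequences of $\beta$ being a simple root and $\alpha \neq \pm\beta$, and the excerpt already uses exactly this kind of argument (the $\beta$-string and \cite[Lemma~9.4]{Hum72}) in the proof of the preceding lemma, so no genuine difficulty arises. The whole argument is a short case analysis; there is no substantial obstacle.
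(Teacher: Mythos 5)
Your proof is correct and takes essentially the same route as the paper: observe $s_\beta(\alpha)\in\Phi^+$, then split on the sign of $\langle\alpha,\beta^\vee\rangle$ and apply the upper-ideal-with-respect-to-$\Theta$ condition in one case and the lower-ideal condition in the other. The only cosmetic difference is that your Case 1 walks the $\beta$-string step by step, whereas the paper simply notes $s_\beta(\alpha)-\alpha\in\Z\beta$ and applies the equivalent characterization \eqref{eq:Theta upper ideal} in a single step; since $c\beta$ with $\beta\in\Phi^+_\Theta$ is already a sum of positive roots in $\Phi^+_\Theta$, the string argument is unnecessary (though harmless).
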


\begin{proof}
Since $s_\beta(\Phi^+ \setminus \{\beta \}) = \Phi^+ \setminus \{\beta \}$ (cf. \cite[Proposition~1.4]{Hum90}), we see that $s_\beta(\alpha) \in \Phi^+$.
By the formula in \eqref{eq:reflection} we have $s_\beta(\alpha) - \alpha \in \Z \beta$, so either $\alpha \preceq_\Theta s_\beta(\alpha)$ or $s_\beta(\alpha) \preceq_\Theta \alpha$ holds. 
If $\alpha \preceq_\Theta s_\beta(\alpha)$, then $s_\beta(\alpha) \in I$ from the condition \eqref{eq:Theta upper ideal}. 
If $s_\beta(\alpha) \preceq_\Theta \alpha$, then we have in particular $s_\beta(\alpha) \preceq \alpha$, so $s_\beta(\alpha) \in I$ by the condition \eqref{eq:lower ideal}.
\end{proof}

\begin{proposition} \label{proposition:WTheta-action}
Let $I \subset \Phi^+$ be a $\Theta$-ideal. Then the following holds.
\begin{enumerate}
\item[(1)] The $W_\Theta$-action on $\Der \RR$ preserves the logarithmic derivation module $D(\AA_I)$. 
\item[(2)] The $W_\Theta$-action on $\RR$ induces the $W_\Theta$-action on the quotient ring $\RR/\a(I)$.
\end{enumerate}
\end{proposition}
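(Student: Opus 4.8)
The plan is to prove part (1) first, then deduce part (2) as a formal consequence. For part (1), I would fix $w \in W_\Theta$ and $\der \in D(\AA_I)$ and show $w \cdot \der \in D(\AA_I)$, i.e.\ $(w \cdot \der)(\alpha) \in \RR\,\alpha$ for every $\alpha \in I$. Unwinding the definition in \eqref{eq:W-action_derivation}, we have $(w \cdot \der)(\alpha) = w\bigl(\der(w^{-1}\alpha)\bigr)$. Since $W_\Theta$ is generated by the simple reflections $s_\beta$ with $\beta \in \Theta$, it suffices to treat the case $w = s_\beta$; the general case then follows by writing $w$ as a word in such reflections and iterating (one must check the intermediate roots stay in $I$, but this is exactly what the single-reflection step gives). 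So the crux is: for $\beta \in \Theta$ and $\alpha \in I$, show $s_\beta(\der(s_\beta \alpha)) \in \RR\,\alpha$. Here I would split into two cases according to whether $\alpha = \beta$ or $\alpha \neq \beta$.

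If $\alpha \neq \beta$, then by Lemma~\ref{lemma:WTheta-action} we have $s_\beta(\alpha) \in I$, hence $\der(s_\beta\alpha) \in \RR\, s_\beta(\alpha)$ because $\der \in D(\AA_I)$; applying $s_\beta$ (a ring automorphism of $\RR$) gives $s_\beta(\der(s_\beta\alpha)) \in \RR\, s_\beta(s_\beta\alpha) = \RR\,\alpha$, as desired. If $\alpha = \beta$: note $\beta \in \Theta \subset \Phi^+_\Theta \subset I$ since $I$ is a $\Theta$-ideal, so $\beta \in I$ and hence $\der(\beta) \in \RR\,\beta$ (apply $\der \in D(\AA_I)$ directly, or use $s_\beta\beta = -\beta$ and that $\RR(-\beta) = \RR\,\beta$); then $\der(s_\beta\beta) = \der(-\beta) = -\der(\beta) \in \RR\,\beta$, and applying $s_\beta$ gives $s_\beta(\der(s_\beta\beta)) \in \RR\,s_\beta(\beta) = \RR(-\beta) = \RR\,\beta = \RR\,\alpha$. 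In both cases $(s_\beta\cdot\der)(\alpha) \in \RR\,\alpha$, so $s_\beta\cdot\der \in D(\AA_I)$; iterating over a reduced word for an arbitrary $w \in W_\Theta$ finishes part (1).

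For part (2), I would argue that $\a(I)$ is a $W_\Theta$-stable ideal of $\RR$, which immediately gives a well-defined $W_\Theta$-action on the quotient $\RR/\a(I)$. Take $f = \der(Q) \in \a(I)$ with $\der \in D(\AA_I)$, and let $w \in W_\Theta$. Using the $W$-invariance of $Q$ (so $w^{-1}Q = Q$) together with \eqref{eq:W-action_derivation}, one computes $w(f) = w(\der(Q)) = w(\der(w^{-1}Q)) = (w\cdot\der)(Q)$. By part (1), $w\cdot\der \in D(\AA_I)$, so $(w\cdot\der)(Q) \in \a(I)$, i.e.\ $w(f) \in \a(I)$. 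Hence $\a(I)$ is $W_\Theta$-stable and the quotient action is well defined. I expect the single-reflection case-split in part (1) to be the main (and only real) obstacle — and even that is not much of an obstacle, since Lemma~\ref{lemma:WTheta-action} was evidently set up precisely to handle it; the only subtlety worth stating carefully is that the iteration over a reduced word stays inside $D(\AA_I)$ at every intermediate stage, which is exactly what the one-step statement guarantees.
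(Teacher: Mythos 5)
Your proof is correct and follows essentially the same route as the paper: reduce part (1) to a single simple reflection $s_\beta$ using that $W_\Theta$ is generated by $\{s_\beta : \beta \in \Theta\}$, split into the cases $\alpha = \beta$ and $\alpha \neq \beta$ with Lemma~\ref{lemma:WTheta-action} handling the latter, and then deduce part (2) from part (1) via the $W$-invariance of $Q$. The only cosmetic difference is that you spell out the word-iteration step (noting that each intermediate derivation stays in $D(\AA_I)$), which the paper leaves implicit.
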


\begin{proof}
(1) Let $\der \in D(\AA_I)$ and $\beta \in \Theta$. 
Then it suffices to show that $s_\beta \cdot \der \in D(\AA_I)$ since $W_\Theta$ is generated by $s_\beta$'s for $\beta \in \Theta$. 
Take a root $\alpha \in I$.
If $\alpha \neq \beta$, then we have 
\begin{align*} 
(s_\beta \cdot \der)(\alpha) = s_\beta(\der(s_\beta^{-1}(\alpha))) = s_\beta(\der(s_\beta (\alpha)).
\end{align*}
It follows from Lemma~\ref{lemma:WTheta-action} that $s_\beta(\alpha) \in I$, so one has $\der(s_\beta (\alpha)) \in \RR \, s_\beta (\alpha)$ since $\der \in D(\AA_I)$. 
This implies that $(s_\beta \cdot \der)(\alpha) = s_\beta(\der(s_\beta (\alpha)) \in \RR \, \alpha$ for all $\alpha \in I$ with $\alpha \neq \beta$.
If $\alpha = \beta$, then 
\begin{align*} 
(s_\beta \cdot \der)(\beta) = s_\beta(\der(s_\beta^{-1}(\beta))) = -s_\beta(\der(\beta)) \in \RR \, \beta
\end{align*}
since $\der \in D(\AA_I)$ and $\beta \in \Theta \subset I$. 
Therefore, we conclude that $s_\beta \cdot \der \in D(\AA_I)$ as desired. 

(2) For $w \in W_\Theta$ and $f \in \a(I)$, we show that $w(f) \in \a(I)$.
By the definition of $\a(I)$, we can write $f=\der(Q)$ for some $\der \in D(\AA_I)$. 
Then we have 
\begin{align*} 
w(f) = w(\der(Q)) = w(\der(w^{-1}Q)) = (w \cdot \der)(Q) 
\end{align*}
where the second equality above follows from the definition that $Q$ is $W$-invariant. 
We obtain $w \cdot \der \in D(\AA_I)$ from (1), so $w(f) = (w \cdot \der)(Q) \in \a(I)$. 
Hence, the $W_\Theta$-action on $\RR$ induces the $W_\Theta$-action on the quotient ring $\RR/\a(I)$.
\end{proof}

\begin{remark}
Let $N$ be a regular nilpotent element in $\g$ and $I$ a $\Theta$-ideal in $\Phi^+$.
The $W_\Theta$-action on $G/B$ preserves $\Hess(N,I)$ from Lemma~\ref{lemma:WTheta-action on X(x,H)}. 
This induces the $W_\Theta$-action on $H^*(\Hess(N,I))$.
On the other hand, the $W_\Theta$-action on $\RR$ induces the $W_\Theta$-action on the quotient ring $\RR/\a(I)$ by Proposition~\ref{proposition:WTheta-action}-(2).
Then it is straightforward to see that $H^*(\Hess(N,I)) \cong \RR/\a(I)$ in Theorem~\ref{theorem:AHMMS} is an isomorphism as $\Q[W_\Theta]$-modules.
\end{remark}

\subsection{$W_\Theta$-invariants}
Let $I$ be a $\Theta$-ideal in $\Phi^+$ and consider the homomorphism $\varphi_I: \RR \xrightarrow{\varphi} H^*(G/B) \xrightarrow{j_I^*} H^*(\Hess(N,I))$ defined in \eqref{eq:homomorphism_Hess(N,I)}.
Recall that $\varphi: \RR \to H^*(G/B)$ is a surjective $W_\Theta$-equivariant map by \cite{Bor53}. 
It also follows from Theorem~\ref{theorem:restriction_map_surjective_AHMMS} and Lemma~\ref{lemma:WTheta-action on X(x,H)} that the restriction map $j_I^*: H^*(G/B) \to H^*(\Hess(N,I))$ is surjective and $W_\Theta$-equivariant. 
As discussed in the proof of Theorem~\ref{theorem:main}, the maps $\varphi$ and $j_I^*$ induce the surjective maps $\varphi^{W_\Theta}: \RR^{W_\Theta} \twoheadrightarrow H^*(G/B)^{W_\Theta}$ and $(j_I^*)^{W_\Theta}: H^*(G/B)^{W_\Theta} \twoheadrightarrow H^*(\Hess(N,I))^{W_\Theta}$. 
Therefore, we conclude from Theorems~\ref{theorem:BGG} and \ref{theorem:main} that the composition map $\varphi_I^{W_\Theta}: \RR^{W_\Theta} \xrightarrow{\varphi^{W_\Theta}} H^*(G/B)^{W_\Theta} \xrightarrow{(j_I^*)^{W_\Theta}} H^*(\Hess(N,I))^{W_\Theta}$ yields the following surjective homomorphism 
\begin{align} \label{eq:homomorphism_Hess(N,I)Theta}
\varphi_{I, \Theta}: \RR^{W_\Theta} \twoheadrightarrow H^*(G/P) \twoheadrightarrow H^*(\Hess_\Theta(N,I))
\end{align}
where the first map is the composition of the isomorphism \eqref{eq:BGG} and $\varphi^{W_\Theta}$, and the second map is the restriction map induced from the inclusion $j_{I,\Theta}: \Hess_\Theta(N,I) \hookrightarrow G/P$. 
Our goal is to describe the kernel of $\varphi_{I, \Theta}$ in terms of $W_\Theta$-invariants in logarithmic derivation modules. 

Recall that the derivation module $\Der \RR$ of $\RR=\Sym \t^*_{\Q}$ admits the $W$-action by \eqref{eq:W-action_derivation}. 
We set
\begin{align*}
(\Der \RR)^{W_\Theta} \coloneqq \{\der \in \Der \RR \mid w \cdot \der = \der \ \textrm{for all} \ w \in W_\Theta \}.
\end{align*}
It is clear that $(\Der \RR)^{W_\Theta}$ is an $\RR^{W_\Theta}$-module. 
It is known that $(\Der \RR)^{W_\Theta}$ is a free $\RR^{W_\Theta}$-module of rank $n$ (cf. \cite[Lemma~6.48]{OrTe}).
If $I$ is a $\Theta$-ideal in $\Phi^+$, then the $W_\Theta$-action on $\Der \RR$ preserves the logarithmic derivation module $D(\AA_I)$ by Proposition~\ref{proposition:WTheta-action}-(1).
For a $\Theta$-ideal $I \subset \Phi^+$, we define an $\RR^{W_\Theta}$-submodule of $(\Der \RR)^{W_\Theta}$ by 
\begin{align*}
D(\AA_I)^{W_\Theta} \coloneqq D(\AA_I) \cap (\Der \RR)^{W_\Theta} = \{\der \in (\Der \RR)^{W_\Theta} \mid \der(\alpha) \in \RR \, \alpha \ \textrm{for all} \ \alpha \in I \}.
\end{align*}
Fix a $W$-invariant non-degenerate quadratic form $Q \in \Sym^2 (\t^*_\Q)^W$.
Then one can see that $\der(Q)$ belongs to $\RR^{W_\Theta}$ for any $\der \in (\Der \RR)^{W_\Theta}$.
In fact, for any $w \in W_\Theta$, we have 
\begin{align} \label{eq:der(Q)invariant}
w(\der(Q)) = w(\der(w^{-1} Q)) = (w \cdot \der)(Q) = \der(Q).
\end{align} 
For a $\Theta$-ideal $I \subset \Phi^+$, we define an ideal $\a(I)_\Theta$ of $\RR^{W_\Theta}$ by 
\begin{align} \label{eq:a(I)Theta} 
\a(I)_\Theta \coloneqq \{\der(Q) \in \RR^{W_\Theta} \mid \der \in D(\AA_I)^{W_\Theta} \}.\end{align}
By a similar argument of \cite[Remark~3.6]{AHMMS}, the ideal $\a(I)_\Theta$ does not dependend on a choice of $Q$. 

\begin{theorem}
Let $I$ be a $\Theta$-ideal in $\Phi^+$. The kernel of the surjective map $\varphi_{I, \Theta}$ in  \eqref{eq:homomorphism_Hess(N,I)Theta} coincides with the ideal $\a(I)_\Theta$ in \eqref{eq:a(I)Theta}. 
In particular, $\varphi_{I,\Theta}$ induces the isomorphism
\begin{align*} 
H^*(\Hess_\Theta(N,I)) \cong \RR^{W_\Theta}/\a(I)_\Theta 
\end{align*} 
as graded $\Q$-algebras. 
\end{theorem}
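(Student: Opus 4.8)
The plan is to deduce the theorem from Theorem~\ref{theorem:AHMMS} by passing to $W_\Theta$-invariants, paralleling exactly the way Theorem~\ref{theorem:main} is obtained from Theorem~\ref{theorem:restriction_map_surjective_AHMMS}. First I would record that, by Theorem~\ref{theorem:AHMMS} together with the Remark following Proposition~\ref{proposition:WTheta-action}, the map $\varphi_I\colon\RR\twoheadrightarrow H^*(\Hess(N,I))$ is a surjective $W_\Theta$-equivariant graded ring homomorphism with kernel $\a(I)$. Since $W_\Theta$ is finite and we work over $\Q$, the Reynolds operator $\tfrac{1}{|W_\Theta|}\sum_{w\in W_\Theta}w$ makes the $W_\Theta$-invariant functor exact; hence $\varphi_I$ restricts to a surjection $\varphi_I^{W_\Theta}\colon\RR^{W_\Theta}\twoheadrightarrow H^*(\Hess(N,I))^{W_\Theta}$ whose kernel is $\a(I)\cap\RR^{W_\Theta}$ (this is the same argument already used in the proof of Theorem~\ref{theorem:main} to produce surjectivity on invariants). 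By the definition of $\varphi_{I,\Theta}$ in \eqref{eq:homomorphism_Hess(N,I)Theta}, combined with Theorems~\ref{theorem:BGG} and \ref{theorem:main}, the map $\varphi_I^{W_\Theta}$ is identified with $\varphi_{I,\Theta}$ under the isomorphisms $H^*(G/P)\cong H^*(G/B)^{W_\Theta}$ and $H^*(\Hess_\Theta(N,I))\cong H^*(\Hess(N,I))^{W_\Theta}$. Consequently $\ker\varphi_{I,\Theta}=\a(I)\cap\RR^{W_\Theta}$.

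The remaining task is the purely algebraic identity $\a(I)_\Theta=\a(I)\cap\RR^{W_\Theta}$. The inclusion $\a(I)_\Theta\subseteq\a(I)\cap\RR^{W_\Theta}$ is immediate: if $\der\in D(\AA_I)^{W_\Theta}\subseteq D(\AA_I)$ then $\der(Q)\in\a(I)$, and $\der(Q)\in\RR^{W_\Theta}$ by \eqref{eq:der(Q)invariant}. For the converse I would use an averaging argument: given $f\in\a(I)\cap\RR^{W_\Theta}$, write $f=\der(Q)$ with $\der\in D(\AA_I)$ and replace $\der$ by $\der'=\tfrac{1}{|W_\Theta|}\sum_{w\in W_\Theta}w\cdot\der$. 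Since $D(\AA_I)$ is an $\RR$-module, hence a $\Q$-vector space, and is $W_\Theta$-stable by Proposition~\ref{proposition:WTheta-action}-(1), we get $\der'\in D(\AA_I)^{W_\Theta}$; and using $W$-invariance of $Q$ together with $W_\Theta$-invariance of $f$ one computes $\der'(Q)=\tfrac{1}{|W_\Theta|}\sum_{w\in W_\Theta}(w\cdot\der)(Q)=\tfrac{1}{|W_\Theta|}\sum_{w\in W_\Theta}w(f)=f$, so $f\in\a(I)_\Theta$.

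Putting the two steps together yields $\ker\varphi_{I,\Theta}=\a(I)\cap\RR^{W_\Theta}=\a(I)_\Theta$, and since $\varphi_{I,\Theta}$ is a surjective homomorphism of graded $\Q$-algebras it descends to the desired isomorphism $H^*(\Hess_\Theta(N,I))\cong\RR^{W_\Theta}/\a(I)_\Theta$. I do not expect a genuine obstacle: every input — surjectivity and $W_\Theta$-equivariance of $\varphi_I$, exactness of the invariant functor over $\Q$, $W_\Theta$-stability of $D(\AA_I)$, and the compatibility $\varphi_{I,\Theta}=\varphi_I^{W_\Theta}$ — is already available. The only point needing a little care is the bookkeeping identifying $\varphi_I^{W_\Theta}$ with $\varphi_{I,\Theta}$ through the chain of isomorphisms in Theorems~\ref{theorem:BGG} and \ref{theorem:main}, and (implicitly) the already-noted independence of $\a(I)_\Theta$ from the choice of $Q$.
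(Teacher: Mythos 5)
Your proof is correct and follows essentially the same route as the paper: reduce $\ker\varphi_{I,\Theta}$ to $\a(I)\cap\RR^{W_\Theta}$ via the commutative diagram built from Theorems~\ref{theorem:AHMMS}, \ref{theorem:BGG}, and \ref{theorem:main}, then prove $\a(I)_\Theta=\a(I)\cap\RR^{W_\Theta}$ by the same averaging (Reynolds) argument on $D(\AA_I)$ using Proposition~\ref{proposition:WTheta-action}-(1) and the $W$-invariance of $Q$. No substantive difference.
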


\begin{proof}
By the definition of $\varphi_{I, \Theta}$, we have the following commutative diagram:
\begin{center}
\begin{tikzcd} 
\varphi_I: &[-3em] \RR \arrow[twoheadrightarrow,r, "\varphi"] &[0.5em] H^*(G/B) \arrow[twoheadrightarrow,r, "j_I^*"] &[0.5em] H^*(\Hess(N,I)) \\
\varphi_{I, \Theta}: &[-3em] \RR^{W_\Theta}  \arrow[twoheadrightarrow,r, ""] \arrow[hookrightarrow,u, "{\rm inclusion}"'] &[0.5em]  H^*(G/P) \arrow[twoheadrightarrow,r, "j_{I, \Theta}^*"] \arrow[hookrightarrow,u, "\pi^*"'] & H^*(\Hess_\Theta(N,I)). \arrow[hookrightarrow,u, "\pi_I^*"']
\end{tikzcd}
\end{center}
It follows from Theorem~\ref{theorem:AHMMS} that $\ker \varphi_I = \a(I)$. 
Since $\ker \varphi_{I, \Theta} = \ker \varphi_I \cap \RR^{W_\Theta} = \a(I) \cap \RR^{W_\Theta}$, what we want to show is that $\a(I)_\Theta = \a(I) \cap \RR^{W_\Theta}$. 

If $f \in \a(I)_\Theta$, then we can write $f=\der(Q)$ for some $\der \in D(\AA_I)^{W_\Theta}$. 
It is clear that $f=\der(Q) \in \a(I)$ since $D(\AA_I)^{W_\Theta} \subset D(\AA_I)$. 
We also see that $f=\der(Q) \in \RR^{W_\Theta}$ by \eqref{eq:der(Q)invariant} and hence one has $\a(I)_\Theta \subset \a(I) \cap \RR^{W_\Theta}$. 

Conversely, if we take $f \in \a(I) \cap \RR^{W_\Theta}$, then one can write $f=\der(Q)$ for some $\der \in D(\AA_I)$.
By setting $\der^\Theta \coloneqq \frac{1}{|W_\Theta|} \sum_{w \in W_\Theta} w \cdot \der \in (\Der \RR)^{W_\Theta}$, the derivation $\der^\Theta$ belongs to $D(\AA_I)$ from Proposition~\ref{proposition:WTheta-action}-(1). 
Hence, $\der^\Theta \in D(\AA_I)^{W_\Theta}$. 
We obtain $f = \der^\Theta(Q) \in \a(I)_\Theta$ by the following computation
\begin{align*}
\der^\Theta(Q) &= \frac{1}{|W_\Theta|} \sum_{w \in W_\Theta} (w \cdot \der)(Q) = \frac{1}{|W_\Theta|} \sum_{w \in W_\Theta} w(\der(w^{-1} Q)) = \frac{1}{|W_\Theta|} \sum_{w \in W_\Theta} w(\der(Q)) \\
&= \frac{1}{|W_\Theta|} \sum_{w \in W_\Theta} w(f) = \frac{1}{|W_\Theta|} \sum_{w \in W_\Theta} f =f.
\end{align*}
Therefore, we conclude that $\a(I)_\Theta = \a(I) \cap \RR^{W_\Theta} = \ker \varphi_{I, \Theta}$ as desired.
\end{proof}

\smallskip

\end{document}